\newtheorem{thm}{Theorem}[section]
\newtheorem{prop}[thm]{Proposition}  
\newtheorem{cor}[thm]{Corollary}
\newtheorem{lem}[thm]{Lemma}
\theoremstyle{definition}
\newtheorem{dff}[thm]{Definition}
\newtheorem{remark}[thm]{Remark}
\newtheorem{notation}[thm]{Notation}
\theoremstyle{plain}
\renewenvironment{proof}{\par\noindent {\bf Proof. }}{\hfill$\Box$\bigskip}
\newcommand{\ennd}{\mathop{\mathrm{End}}}
\renewcommand{\phi}{\varphi}
\newcommand{\ve}{\varepsilon}
\newcommand{\id}{\mathop{\mathrm{id}}}
\renewcommand{\d}{{\mathop{\mathrm{d}}}_{\pm}}%
\newcommand{\jz}[2]
{\mbox{$J_{#1}^{#2}$}}
\newcommand{\intt}{\mathop{\mathrm{Int}}}
\author{M. G. Mahmoudi}
\title{
Orthogonal symmetries and Clifford algebras}
\begin{document}
 \maketitle

\begin{abstract} 
 Involutions of the Clifford algebra of a quadratic space induced by
 orth\-ogonal symmetries are investigated.\\ \\

\noindent
\emph{2000 Mathematics Subject Classification:}  11E04, 11E39, 11E88, 16W10 \\

\noindent
\emph{Key words:} 
Orthogonal symmetry, reflection, Clifford algebra, Involution,
quadratic form, even Clifford algebra, multiquaternion algebra, universal property of Clifford algebra,
Clifford map
\end{abstract}

\section{Introduction}
Clifford algebra is one of the important algebraic structures which
can be associated to a quadratic form. 
These algebras are among the most fascinating algebraic structures.
Not only they have many applications in algebra and other branches of
Mathematics, but also they have wide applications beyond Mathematics,
e.g., Physics,  Computer Science and
Engineering \cite{girard}, \cite{sabbata}, \cite{lounesto}, \cite{lounesto-ablamowicz}.   

A detailed historical account of Clifford algebras from their genesis
can be found in \cite{vdwaerden}. 
See also \cite{helmstetter-micali} and \cite{lounesto} for an interesting brief historical
account of Clifford algebras. 

Many familiar algebras can be regarded as special cases of Clifford
algebras. 
For example the algebra of Complex numbers is isomorphic to the
Clifford algebra of any one-dimensional negative definite quadratic form over $\mathbb{R}$.
The algebra of Hamilton quaternions is isomorphic to the Clifford algebras of a two-dimensional
negative definite quadratic form over $\mathbb{R}$. 
More generally, as shown by D. Lewis in \cite[Proposition
1]{lewis-clifford}, a multiquaternion algebra, i.e., an algebra like
$Q_1\otimes\cdots\otimes Q_n$ where $Q_i$ is a quaternion algebra over 
a field $K$, can be regarded as the Clifford algebra
of a suitable nondegenerate quadratic form $q$ over the base field
$K$.
In \cite{lewis-clifford}, such a form $q$ is also explicitly constructed.  
The Grassmann algebra (or the exterior algebra) may also be regarded as the
Clifford algebra of the null (totally isotropic) quadratic form.

The word \emph{`involution'} appears in different contexts in
Mathematics.
In group theory, an involution is an element of a group whose order is
two. 
In analysis, an involution of a (normed) Banach algebra $A$ is a map
$*$ from $A$ into itself such that for every $x,\ y\in A$: $x^{**}=x$, $(xy)^*=y^*x^*$ and
$\lVert x^*x\rVert=\lVert x\rVert^2$.
More generally in the theory of algebras, an involution of an algebra $A$ is a self
inverse map $\varphi$ from $A$ to itself such that for every $x$, $y\in A$, we
have $\varphi(x+y)=\varphi(x)+\varphi(y)$ and $\varphi(xy)=\varphi(y)\varphi(x)$, i.e., $\varphi$ is an
anti-automorphism of order $2$. 
Many algebras are naturally equipped with an involution, for example
the full matrix algebra $\mathbb{M}_n(K)$, consisting of all $n\times
n$ matrices over a field $K$ is equipped with the \emph{transposition} involution,
i.e., the involution which maps a matrix $A\in \mathbb{M}_n(K)$ to its
transpose, $A^t$.
If $q$ is a quadratic form defined on a vector space $V$, then the
endomorphism algebra $\mathrm{End}(V)$ is equipped with the adjoint
involution $\sigma_q:\mathrm{End}(V)\rightarrow \mathrm{End}(V)$
characterized by the property $b(x,fy)=b(\sigma_q(f)x,y)$ for every
$x,\ y\in V$ and for every $f\in\mathrm{End}(V)$, here $b$ is the
bilinear form associated to $q$.  
This involution, which determines $q$ up to similarity, reflects many properties of the quadratic form $q$. 
For example we have the following assertions: (1) $q$ is isotropic if and only if there exists $0\neq a\in
\mathrm{End}(V)$ such that $\sigma_q(a)a=0$. 
(2) $q$ is hyperbolic if and only if there exists an idempotent $e\in
\mathrm{End}(V)$ such that $\sigma_q(e)=1-e$,
see \cite{bfst} or \cite[Ch. II]{boi}. 

Clifford algebras have also many natural involutions.
In fact every orthogonal symmetry $s$ (i.e., a self-inverse isometry)
of a quadratic space $(V,q)$ can be extended to an
involution $J^s$ of both the Clifford algebra and the even Clifford algebra of $q$.
In particular the Clifford algebra of $q$ has two natural involutions which are
induced by the maps $\id:V\rightarrow V$ and $-\id:V\rightarrow V$.
The involutions $J^{\id}$ and $J^{-id}$ also reflect certain
properties of $q$; 
for instance their hyperbolicity is equivalent to the existence of
particular subforms of $q$.
(see \cite{mahmoudi}). 

Finite dimensional simple algebras with involution
form an important class of algebras with involution whose properties
are relatively well understood.
By a theorem due to Albert, a central simple $K$-algebra $A$ carries an
involution fixing $K$ if and only if the order of the class of $A$ in
its Brauer group of $K$ is at most two (see \cite[Ch. 8, 8.4]{scharlau}).      
As a consequence of a theorem due to Merkurjev, every central
simple algebra whose class in the Brauer group of $K$ is at most two is 
equivalent to a tensor product of quaternion algebras. 
An important class of algebras with involution is tensor products of
quaternion algebras with involution which are extensively studied in
the literature.  
There are some close analogies between the properties of tensor products
of quaternion algebras with involution on one hand and on the other hand the properties of multiples of Pfister forms.
See \cite{becher}, where it is for example proved that if $(A,\sigma)$
is a tensor product of quaternion algebras with involution such that
$A$ is Brauer equivalent to a quaternion algebra over field $K$ then for every field
extension $L/K$, $(A,\sigma)_L$ is either anisotropic or hyperbolic.   
This property is one of the characteristic properties of the multiples of Pfister forms. 

One can find tensor product of quaternion algebras with involution
which cannot occur as $(C(q),J^{\pm\id})$ for any form $q$ (see
comment after Proposition 3 in \cite{lewis-clifford}).
We however show that for every tensor product of quaternion
algebras with involution
$(A,\sigma)=(Q_1,\sigma_1)\otimes\cdots\otimes(Q_n,\sigma_n)$, there exists a
quadratic space $(V,q)$ of dimension $2n$ and an orthogonal symmetry
$\sigma:V\rightarrow V$  such that $(A,\sigma)$ is isomorphic to
$(C(q),J^\sigma)$, i.e., the
Clifford algebra of $q$ equipped with the involution induced by
$\sigma$ (see Theorem \ref{mul-qua-cliff-sym}).
We then provide more detailed statements; it turns out that when
$\sigma$ is of the first kind, the orthogonal symmetry $\sigma$ can be
chosen to be either $\pm\id$ or a reflection
(see Proposition \ref{tensor-odd} and Proposition \ref{tensor-even}).
For the case where $\sigma$ is of the second kind, a similar result is
proved (see Proposition \ref{tensor-second-kind}). 
Some results in this direction were already available in the
literature, see \cite[\S3]{kq} and \cite[Lemma 10.6]{shapiro}.
In order to prove these results, we need to provide involutorial
versions of some of the main structure theorems of Clifford algebras (i.e., the
results which link the Clifford algebra of an orthogonal sum of two
quadratic space to the Clifford algebras of summands, see
\cite[Ch. II]{chevalley} or \cite[Ch. V]{lam}). 
Section \ref{section-decomposition} is devoted to prove these
results. 
Some results in this direction were already been obtained in the
literature, see Proposition 2 of \cite{lewis-period}.

In the literature, the even Clifford algebra (i.e., the even sub-algebra of
the Clifford algebra) of a quadratic space $(V,q)$ is generally defined as a
sub-algebra of the Clifford algebra which is generated by products of
an even numbers of vectors in $V$.
As we will observe in section \ref{section-decomposition}, having
a definition of the even Clifford algebra, as an individual
mathematical object by means of a universal property, would be a handy
tool at our disposal.
Especially in proving isomorphisms of algebras with involution
involving even Clifford algebras, the universal property can slightly
shorten the proofs. 
This was our motivating reason to find a universal property of the
even Clifford algebra in section \ref{puacliff}.
We also hope that this approach will be useful from a pedagogical point
of view.  

In sections \ref{section-orsym} and \ref{section-type}, we make some general observations about
involutions of a Clifford algebra which are induced by an orthogonal
symmetry, for instance as an applications of results of section \ref{section-decomposition} the type of such involutions is determined.     
The type of natural involutions of the Clifford algebra $C(V,q)$ induced
by $\pm\id$ are known, see \cite{lewis-clifford} and \cite[pp. 116-118]{shapiro}. 

\section{Preliminaries}\label{s.preliminaires}

Let $K$ be a field of characteristic different from $2$.
A quadratic space $(V,q)$ over $K$ is a pair, consisting of a finite
dimensional vector space $V$ over $K$ and a quadratic form
$q:V\rightarrow K$, i.e., a map $q:V\rightarrow K$ which satisfies
$q(\lambda x)=\lambda^2 q(x)$ for all $\lambda\in K$ and $x\in V$
so that map $b_q: (x,y)\mapsto \frac{1}{2}(q(x+y)-q(x)-q(y))$ is a
bilinear map from $V\times V$ to $K$. 

A $K$-linear map from a quadratic space $(V,q)$ to a $K$-algebra $A$
with the unity element $1_A$ is called 
a \emph{Clifford map} if for every $x\in V$:
\begin{equation}\label{q(x).1}
\varphi(x)^2=q(x)\cdot 1_A.
\end{equation}

By replacing $x$ by $x+y$ in the relation  
(\ref{q(x).1}), we obtain 
\begin{equation}\label{q(x+y)}
\varphi(x)\varphi(y)+\varphi(y)\varphi(x)=2b_q(x,y)\cdot1_A,
\end{equation}
where $b_q(x,y)=\frac{1}{2}(q(x+y)-q(x)-q(y))$ is the bilinear form
associated to the quadratic form $q$. 
In particular the vectors $x$ and $y$ are orthogonal with respect to the bilinear
form $b_q$ if and only if their images
under the Clifford map $\varphi$ anticommute.\\

We recall that 
the \emph{Clifford algebra} of a quadratic space $(V,q)$ over a
field $K$ is an
algebra $C=C(V,q)$ over $K$ with the unity element 
$1_C$, endowed with a Clifford map  {$i_q$} from $(V,q)$ to 
$C$ such that:
\begin{itemize}
\item[( C1)] the $K$-algebra generated by $1_C$ and $i_q(V)$ is $C$;
\item[(C2)] for every Clifford map $\varphi:(V,q)\rightarrow A$, there exists an algebra homomorphism 
$\Phi: C\rightarrow A$ such that: $\varphi=\Phi\circ i_q$:
\end{itemize}   
\begin{equation}
\xymatrix@!{V\ar[rr]^{\varphi}\ar[dr]_{i_q} && A\\
&C\ar[ur]_{\Phi}&}
\end{equation}

Note that according to the properties $(C1)$ and $(C2)$, the map
$\Phi$ is unique and $i_q$ is also injective. 
The quotient of the tensor algebra $T(V)$ by the ideal generated by
all elements of the form $x\otimes x-q(x)\cdot 1$ where $x\in V$, satisfies this universal property.
The Clifford algebra of a quadratic space $(V,q)$ which is denoted 
by $C(V,q)$ therefore exists and it is uniquely determined (up to $K$-algebra isomorphism)
by the universal property $(C2)$.  
Fixing the injection map $i_q$, one may identify $V$ with a subspace
of $C(V,q)$.
The algebra $C(V,q)$ is also denoted by $C(q)$ if this leads to no
confusion. 
\\


Let $i_q$ be the injection map in the definition of the Clifford algebra
$C=C(V,q)$.
The map $-i_q$ defined by $(-i_q)(x)=-i_q(x)$
is also a Clifford map from $V$ to $C$, and thus there exists
an algebra homomorphism $\gamma:C\rightarrow C$
such that the following diagram commutes: 

\begin{equation} 
\begin{array}{lll}
\xymatrix@!{V\ar[rr]^{-i_q}\ar[dr]_{i_q} && C\\
&C\ar[ur]_{\gamma}&}& & 
\begin{array}{l}\\\\
\gamma(1_C)=1_C,\\
\gamma\circ i_q=-i_q,\\
\gamma(i_q(x))=-i_q(x).
\end{array}
\end{array}
\end{equation}

As $\gamma^2(i_q(x))=i_q(x)$, the map $\gamma^2$ coincides with the identity automorphism 
of $C$.\\ 

Let $C^+$ be the set of all elements $x\in C$ such that 
$\gamma(x)=x$. 
The set $C^+$ which is also denoted by $C_0(V,q)$ or $C_0(q)$, is a subalgebra of $C$ which is called the 
{\it even subalgebra} of $C$.
This subalgebra contains every element of $C(V,q)$ which is a product of an even number
of vectors of $V$, i.e., the products like $i_q(x_1)i_q(x_2)\cdots
i_q(x_{2p})$ where $x_1,\cdots,x_{2p}\in V$.
\\

Let $C^-$ be the set of all elements $x\in C$ such that
$\gamma(x)=-x$. 
The set $C^-$ is a subvector space $C(V,q)$ which is called the odd
part of the Clifford algebra $C(V,q)$. 
This subspace contains contains every products
of odd numbers of vectors of $V$.\\ 

Let $C^{op}$ be the opposite algebra of $C=C(V,q)$. 
The map
$i_q^*: V\rightarrow C^{op}$ defined by  
$i_q^*(x)=i_q(x)$
is also a Clifford map. 
There exists so a $K$-algebra
  homomorphism $J:C\rightarrow C^{op}$ such that
  $J\circ i_q=i_q^*$.
As
$i_q(V)=i_q^*(V)$,  
the map $J$ is a bijection.  
The map $J$ is the unique antiautomorphism of $C$
fixing $i_q(V)$ pointwise. 
The image  of
a product  $i_q(x_1)\cdots i_q(x_p)$ under the map $J$ is the product 
of the same terms in the inverse order.
The map $J$ is an important involution
of $C$ which is sometimes called the \emph{reversion} of $C(V,q)$
(cf. \cite[p. 107]{helmstetter-micali}). 
In general, an involution of a ring $R$ is an antiautomorphism of $R$
of order $2$. 
If $S$ is the subring of $R$ and if $S$ is a subset of
$\sigma$-invariant elements of $R$ then $\sigma$ is
called an $S$-involution. 
In particular, if $A$ is a $K$-algebra, then a $K$-involution of $A$ is
an involution of $A$ fixing $K$ elementwise.\\

\section{A universal property of the even Clifford algebra}
\label{puacliff}  
The aim of this section is to prove that the even Clifford algebra of
a 
quadratic space satisfies a 
certain  universal property. 
\begin{dff}\label{application-paire}
Let $(V,q)$ be a quadratic space over a field $K$ and let $A$ be an
algebra  
over $K$ with the unity element $1_A$. 
An \emph{even Clifford map} is a bilinear map $\psi$ from
$V\times V$ to $A$ such that for all $x,\ y,\ z\in V$   
\begin{itemize}
\item[1)] $\psi(x,y)\psi(y,z)=q(y)\cdot \psi(x,z)$
\item[2)] $\psi(x,x)=q(x)\cdot 1_A$
\end{itemize}
\end{dff}

\begin{remark}
The conditions (1) and (2) given in the previous definition are 
respectively equivalent to the following conditions: 
\begin{itemize}
\item[1')] $\psi(x,y)\psi(x,z)=2b_q(x,y)\cdot \psi(x,z)-q(x)\cdot \psi(y,z)$
\item[2')] $\psi(x,y)+\psi(y,x)=2b_q(x,y)\cdot 1_A$,
\end{itemize}
where $b_q$ is the associated quadratic form to $q$. 
In particular if $x,y$ are orthogonal with respect to the bilinear
form $b_q$ then  
 $\psi(x,y)\psi(x,z)=-q(x)\cdot\psi(y,z)$.
\end{remark}

\begin{thm}\label{clifford-paire}
  Let $C_0=C_0(V,q)$ be the even Clifford algebra of the nondegenerate quadratic space 
$(V,q)$ over a field $K$ with the unity element 
$1_{C_0}$. 
There exists an even Clifford map $j$ from
$V\times V$ to $C_0$ which satisfies the following conditions:
\begin{itemize}
\item[a)] as a $K$-algebra $C_0$ is generated by $1_{C_0}$ and \{$j(x,y),\ x,\ y\in V$\}.
\item[b)] for every even Clifford map
$\psi:V\times V\rightarrow A$, there exists a unique algebra homomorphism
$\Psi:C_0\rightarrow A$ such that $\psi=\Psi\circ j$, i.e., the
following diagram commutes:
\end{itemize} 
\begin{equation}\label{diag-universal-property}
\xymatrix@!{V\times V\ar[rr]^{\psi}\ar[dr]_{j} && A\\
  &C_0\ar[ur]_{\Psi}&}
\end{equation}
\end{thm}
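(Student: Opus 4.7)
The plan is to realize $j$ as the internal product in $C=C(V,q)$, and to construct $\Psi$ by first building a $K$-algebra homomorphism out of the tensor algebra and then descending it modulo the Clifford relations that cut out $C_0$.

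I will set $j(x,y) := i_q(x)\,i_q(y) \in C_0$. This map is plainly $K$-bilinear, and the two identities of Definition~\ref{application-paire} hold without computation: (2) comes from $i_q(x)^2 = q(x)\cdot 1_C$, and (1) from $i_q(x)\,i_q(y)^2\,i_q(z)=q(y)\cdot i_q(x)\,i_q(z)$. Property (a) is then immediate, since any even-length word $i_q(x_1)\cdots i_q(x_{2p})$ can be read as $j(x_1,x_2)\,j(x_3,x_4)\cdots j(x_{2p-1},x_{2p})$, and such words span $C_0$.

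For part (b), the plan is to work at the level of the tensor algebra. Let $\pi:T(V)\twoheadrightarrow C$ be the canonical projection, with kernel $I$ the two-sided ideal generated by $v\otimes v - q(v)\cdot 1$, $v\in V$; the $\mathbb{Z}/2$-grading of $T(V)$ induces $C_0 \cong T^{\mathrm{ev}}(V)/(I\cap T^{\mathrm{ev}}(V))$, where $T^{\mathrm{ev}}(V)=\bigoplus_{n\ge 0} V^{\otimes 2n}$. Via the natural identification $T^{\mathrm{ev}}(V)\cong T(V\otimes V)$ of $K$-algebras, the linearization $V\otimes V\to A$ of the bilinear map $\psi$ extends, by the universal property of the tensor algebra, to a unique $K$-algebra homomorphism
\[
\tilde\Psi : T^{\mathrm{ev}}(V) \longrightarrow A,\qquad x_1\otimes\cdots\otimes x_{2n} \longmapsto \psi(x_1,x_2)\,\psi(x_3,x_4)\cdots\psi(x_{2n-1},x_{2n}).
\]

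It then remains to verify that $\tilde\Psi$ annihilates $I\cap T^{\mathrm{ev}}(V)$, whereupon it descends to the required $\Psi:C_0\to A$. Since $v\otimes v - q(v)\cdot 1$ is itself of even $\mathbb{Z}/2$-degree, $I\cap T^{\mathrm{ev}}(V)$ is spanned by elements $a\otimes(v\otimes v - q(v)\cdot 1)\otimes b$ in which $a$ and $b$ have the same parity. In the even–even case, axiom (2) supplies $\psi(v,v)=q(v)\cdot 1_A$ and the two $\psi$-words cancel directly. The odd–odd case is the substantive point: the pairing pattern now produces a subword $\psi(u_{2k+1},v)\,\psi(v,w_1)$, which by axiom (1) collapses to $q(v)\cdot\psi(u_{2k+1},w_1)$ — precisely the scalar needed to cancel against the second term of the generator. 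Once $\Psi$ is in hand, $\Psi\circ j = \psi$ is immediate from the formula for $\tilde\Psi$ on $x\otimes y$, and uniqueness is forced by (a). The main obstacle is the odd–odd kernel verification: that is where the asymmetric axiom (1) of an even Clifford map is genuinely used, axiom (2) alone being insufficient to control the relevant element.
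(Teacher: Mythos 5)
Your proof is correct, and it takes a genuinely different route from the paper's. The paper exploits the hypothesis that $(V,q)$ is nondegenerate: it picks an anisotropic vector $v$ with $q(v)=d\neq 0$, writes $V=Kv\perp V_0$, observes that both $w\mapsto\psi(v,w)$ and $w\mapsto i_q(v)i_q(w)$ are ordinary Clifford maps on $(V_0,-dq_0)$, and so obtains homomorphisms $F:C(V_0,-dq_0)\to A$ and an isomorphism $G:C(V_0,-dq_0)\to C_0$ from the usual universal property of the full Clifford algebra; the desired map is then $\Psi=F\circ G^{-1}$. That argument thus reduces the universal property of $C_0$ to that of $C$ and, as a byproduct, re-derives the classical isomorphism $C_0(V,q)\simeq C(V_0,-dq_0)$. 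Your argument instead works upstream of the Clifford relations: you use the $K$-algebra identification $T^{\mathrm{ev}}(V)\cong T(V\otimes V)$, extend the linearization of $\psi$ to a homomorphism $\tilde\Psi:T^{\mathrm{ev}}(V)\to A$ by the universal property of the tensor algebra, and then check that $\tilde\Psi$ kills $I\cap T^{\mathrm{ev}}(V)$, where the even generator $v\otimes v-q(v)\cdot 1$ forces the two cases you list; the odd--odd case, in which the inserted factor straddles a pairing boundary and axiom (1) is needed, is indeed the substantive one, and your computation there is correct. Two remarks: your argument never uses nondegeneracy, so it proves the universal property in greater generality (e.g.\ for the exterior algebra case $q=0$), which the paper's proof cannot do as written; on the other hand, the paper's route is shorter given that the Clifford universal property is already in hand, and it simultaneously yields the structural isomorphism $C_0(V,q)\simeq C(V_0,-dq_0)$ that the paper uses elsewhere. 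Both are valid; yours is the more self-contained and more general of the two.
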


\begin{proof}
Let $C=C(V,q)$ be the Clifford algebra of $(V,q)$ and let
$i_q:V\rightarrow C$ be the canonical injection of
$C(V,q)$.  
We claim that the map $j:V\times V\rightarrow C_0$
defined by $j(x,y)=i_q(x)i_q(y)$ 
is an even Clifford map which satisfies the universal property  
described in the statement of the theorem. 

The fact that $i_q$ is a Clifford map readily implies that $j$ is an
even Clifford map. 
Let $\psi:V\times V\rightarrow A$ be an arbitrary even Clifford map
satisfying: 

$$\left \lbrace\begin{array}{l}
\psi(x,y)\psi(y,z)=q(y)\cdot\psi(x,z)\\
\psi(x,x)=q(x)\cdot 1_A
\end{array}\right.$$ \\

We have to find a $K$-algebra homomorphism $\Psi:C_0\rightarrow A$ such that 
the diagram (\ref{diag-universal-property}) commutes. 

As $(V,q)$ is nondegenerate, there exists a vector $v\in V$ such that
$q(v)\neq 0$. 
Take 
$d:=b_q(v,v)=q(v)\neq 0$ and let $V_0={v}^{\perp}=\{x\in V: b_q(x,v)=0\}$. 
The Witt decomposition 
$q\simeq \langle d\rangle\perp q_0$ where $q_0$ is a subform of $q$
with $\dim q_0=\dim q-1$ induces a
decomposition of vector spaces $V=(K\cdot v)\perp V_0$ where $V_0$ is a
subspace of $V$ of codimension $1$. 

Let
$f:V_0\rightarrow A$ and $g:V_0\rightarrow C_0$ the
maps defined by $f(w)=\psi(v,w)$ and
$g(w)=i_q(v)i_q(w)$ for every $w\in V_0$. 
We have
$f(w)^2=\psi(v,w)^2=-q(v)q(w)\cdot 1_{A}=-dq_0(w)\cdot 1_{A}$
and $g(w)^2=(i_q(v)i_q(w))^2=-dq_0(v)\cdot 1_C=-dq_0(v)\cdot 1_{C_0}$. 
Thus $f:(V_0,-dq_0)\rightarrow A$ and $g:(V_0,-dq_0)\rightarrow C_0$
are two Clifford maps.
It follows that there exist unique homomorphisms 
$F:C(V_0,-dq_0)\rightarrow A$ and
$G:C(V_0,-dq_0)\rightarrow C_0$ such that the following diagram
commutes:
\begin{equation}
\xymatrix{&C_0&\\
V_0\ar[ur]^{g}\ar[dr]_{f}\ar[rr]^{\hspace{-.5cm}i_{-dq_0}}&&C(V_0,-dq_0)
\ar[ul]_{G}\ar[dl]^{F}\\
&A&
}
\end{equation}
For all $w,\ w'\in V_0$ we have 
$g(w)g(w')=-di_q(w)i_q(w')$.
Therefore for all $w,\ w'\in V_0$, the image of $G$ contains the
elements $i_q(w)i_q(w')$. 
But $C_0$ is generated by the elements
$i_q(x)i_q(y)$, $x,\ y\in V$ and $1_{C_0}$ and the element  
$i_q(x)i_q(y)$ can be written as a linear combination of the 
elements $i_q(w)i_q(w')$,  $i_q(v)i_q(w)$, $1_C$ for suitable
$w,\ w'\in V_0$. 
These elements are in 
the image of $G$. 
Therefore the homomorphism $G$ is surjective.
For dimension reasons it is also injective, it is therefore an isomorphism. 
Consider the map 
$\Psi:C_0\rightarrow A$ defined by  
$$\Psi=F\circ G^{-1}.$$
We claim that the diagram (\ref{diag-universal-property}) commutes.
Let $(v_1,v_2)\in V\times V$.
We can write 
$(v_1,v_2)=(\lambda_1v+w_1,\lambda_2v+w_2)$ where 
$\lambda_1,\ \lambda_2\in K$ and $w_1,\ w_2\in V_0$. 
We have:\\

$\begin{array}{ll}
\Psi({j}(v_1,v_2))&=F\circ
G^{-1}({j}(v_1,v_2))\\
&=F\circ G^{-1}(i_q(v_1)i_q(v_2))\\
&=F\circ G^{-1}(i_q(\lambda_1v+w_1)i_q(\lambda_2v+w_2))\\
&=F\circ G^{-1}
(\lambda_1\lambda_2q(v)+\lambda_2i_q(w_1)i_q(v)\\
&\phantom{=F\circ G^{-1}
(} +\lambda_1i_q(v)i_q(w_2)+i_q(w_1)i_q(w_2))\\
&=F\circ G^{-1}
(\lambda_1\lambda_2q(v)-\lambda_2i_q(v)i_q(w_1)\\
&\phantom{=F\circ G^{-1}
(} +\lambda_1i_q(v)i_q(w_2)+i_q(w_1)i_q(w_2))\\
&=F\circ G^{-1}
(\lambda_1\lambda_2q(v)-\lambda_2g(w_1)\\
&\phantom{=F\circ G^{-1}(} 
+\lambda_1g(w_2)-d^{-1}g(w_1)g(w_2))\\
&=F
(\lambda_1\lambda_2q(v)-\lambda_2i_{-dq_0}(w_1)\\
&\phantom{=F\circ G^{-1}(} 
+\lambda_1i_{-dq_0}(w_2)-d^{-1}i_{-dq_0}(w_1)i_{-dq_0}(w_2))\\
&=\lambda_1\lambda_2q(v)-\lambda_2\psi(v,w_1)\\
&\phantom{=F\circ G^{-1}(} 
+\lambda_1\psi(v,w_2)-d^{-1}\psi(v,w_1)\psi(v,w_2)\\
&=\lambda_1\lambda_2q(v)+\lambda_2\psi(w_1,v)
+\lambda_1\psi(v,w_2)+\psi(w_1,w_2)\\
&=\psi(\lambda_1v+w_1,\lambda_2v+w_2)\\
&=\psi(v_1,v_2).
\end{array}$
 
\end{proof}

\begin{remark}  
As an immediate consequence, we obtain that the algebra $C_0=C_0(V,q)$
is uniquely determined by the universal property described in the
statement of the previous theorem.
In fact if $(C_0,j)$ and $(C_0',j')$ are two pairs satisfying the
universal property proved in the previous theorem, then there exist two
$K$-algebra homomorphisms $J$ and $J'$ such that the following diagram commutes:

\begin{equation}
\xymatrix@!{V\times V\ar[rr]^{j'}\ar[dr]_{j} && 
C'_0 \ar@<2pt>[dl]^{J'}\\
&C_0\ar@<2pt>[ur]^{J}&}
\end{equation}
The relations $J\circ j=j'$ and
$J'\circ j'=j$ imply that: 
$J'\circ J\circ j=J'\circ j'=j$
and $J\circ J'\circ j'=J\circ j=j'$.
As $C_0$ is generated, as a $K$-algebra, by
$1_{C_0}$ and $j(V\times V)$ and  $C'_0$ is generated, as a
$K$-algebra, 
by $1_{C'_0}$ and $j'(V\times V)$, 
we obtain
$J'\circ J=\id_{C_0}$ and 
 $J\circ J'=\id_{C'_0}$. 
\end{remark}

\section{Involutions induced by
an orthogonal symmetry}\label{section-orsym}

\begin{dff}
Let $V$ be a finite dimensional vector space over a field $K$ endowed 
with a nondegenerate (symmetric or anti-symmetric) bilinear form $b$. 
An \emph{isometry} of $(V,q)$, i.e., an element $\sigma\in \ennd(V)$ such that $b(\sigma
x,\sigma y)=b(x,y)$ for all $x,\ y\in V$ is said to be a
\emph{orthogonal symmetry} if $\sigma^2=\id$.
In the literature, such
maps are sometimes also called  ``orthogonal involutions'' (cf. 
\cite[Ch.III, \S5]{deheuvels}). 
We have, however, preferred to use the former term in order to
avoid any possible confusion with already well-established notions of orthogonal, symplectic and
unitary involutions (see \cite{boi}).  
A \emph{reflection} $\tau$ of $(V,q)$ is an orthogonal symmetry whose
invariant subspace $V^+=\{x\in V:\ \tau(x)=x\}$ is a hyperplane of 
$V$. 
\end{dff}

We recall the following result from linear algebra:

\begin{prop}
\label{symetrie-car}
Let $(V,b)$ be a nondegenerate symmetric or anti-symmetric bilinear space over a field $K$.
Let $\sigma\in \ennd(V)$ be an element with $\sigma^2=\id$. 
Let $V^+$ and $V^-$ be respectively the eigenspaces of $\sigma$ associated to the
eigenvalues $+1$ and $-1$. 
Then: 
\begin{itemize}
\item[{\rm 1)}] the space $V$ is the direct sum of $V^+$ and $V^-$. 
\item[{\rm 2)}] the map $\sigma$ is an orthogonal symmetry if and only
  if ${V^+}$ and ${V^-}$ are orthogonal with respect to the form $b$.
\end{itemize}
See \rm{\cite[Ch. III, \S5]{deheuvels} or \cite[Ch. I, \S3]{dieudonne}}.
\end{prop}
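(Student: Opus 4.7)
The plan is to dispatch the two assertions separately, both by elementary direct computations that rely only on the hypothesis $\sigma^2=\id$ and the standing assumption that $\mathrm{char}\,K\neq 2$.

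For assertion (1), I would exhibit the decomposition explicitly. Given any $x\in V$, I write
\[
x=\tfrac{1}{2}(x+\sigma(x))+\tfrac{1}{2}(x-\sigma(x)).
\]
Since $\sigma^2=\id$, the first summand is fixed by $\sigma$, hence lies in $V^+$, and the second is sent to its negative, hence lies in $V^-$. This gives $V=V^++V^-$. For the sum to be direct, if $x\in V^+\cap V^-$ then $x=\sigma(x)=-x$, so $2x=0$ and thus $x=0$ in characteristic $\neq 2$.

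For assertion (2), the forward direction is immediate: if $\sigma$ is an isometry and $x\in V^+$, $y\in V^-$, then
\[
b(x,y)=b(\sigma x,\sigma y)=b(x,-y)=-b(x,y),
\]
so $2b(x,y)=0$ and hence $b(x,y)=0$. For the converse, assume $V^+$ and $V^-$ are orthogonal with respect to $b$. Using part (1), decompose arbitrary $x,y\in V$ as $x=x^++x^-$ and $y=y^++y^-$ with $x^\pm,y^\pm$ in the corresponding eigenspaces. Then $\sigma(x)=x^+-x^-$ and $\sigma(y)=y^+-y^-$, and expanding bilinearly,
\[
b(\sigma x,\sigma y)=b(x^+,y^+)-b(x^+,y^-)-b(x^-,y^+)+b(x^-,y^-).
\]
By the orthogonality hypothesis the two mixed terms vanish, leaving $b(x^+,y^+)+b(x^-,y^-)$, which is exactly what one obtains by expanding $b(x,y)$ in the same way. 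Hence $b(\sigma x,\sigma y)=b(x,y)$, so $\sigma$ is an isometry.

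There is no real obstacle here: the proof is a standard exercise in linear algebra, and the only subtle point is the role of $\mathrm{char}\,K\neq 2$, which enters both in constructing the eigenspace decomposition in (1) and in concluding $b(x,y)=0$ from $2b(x,y)=0$ in (2). The symmetric versus anti-symmetric nature of $b$ plays no role in the argument, which is why the statement covers both cases uniformly.
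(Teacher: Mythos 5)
Your proof is correct. The paper does not actually supply a proof for this proposition---it is stated as a recalled fact from linear algebra with citations to Deheuvels and Dieudonn\'e---and the elementary computation you give (averaging to get the eigenspace decomposition, then the bilinear expansion to characterize isometries) is precisely the standard argument one finds in those references. You also correctly identify the only delicate point, namely that $\mathrm{char}\,K\neq 2$ is used both to split $x$ into its symmetric and anti-symmetric parts and to cancel the factor $2$ in the forward direction of (2).
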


\begin{prop}\label{cliff-sym}
Let $(V,q)$ be a quadratic space over a field $K$ and let $s\in \ennd{V}$.
In order that there exists a $K$-involution $\sigma$ of $C(V,q)$ such
that for every $x\in V$, 
$\sigma(i_q(x))=i_q(s(x))$, it is necessary and
sufficient that
$s$ be an orthogonal symmetry of $(V,q)$.
\end{prop}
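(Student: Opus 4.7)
The plan is to handle the two implications separately, using the universal property of $C(V,q)$ in the sufficient direction and the defining relations of the Clifford algebra in the necessary direction.

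For necessity, suppose such an involution $\sigma$ exists. First I would apply $\sigma$ twice to $i_q(x)$: since $\sigma^2 = \id$ on $C$ and $i_q$ is injective, the identity $i_q(s^2(x)) = \sigma^2(i_q(x)) = i_q(x)$ forces $s^2 = \id$ on $V$. Next I would apply $\sigma$ to the defining Clifford relation $i_q(x) i_q(y) + i_q(y) i_q(x) = 2 b_q(x,y) \cdot 1_C$; since $\sigma$ is an anti-automorphism fixing $K$, the left-hand side becomes $i_q(s(y))i_q(s(x)) + i_q(s(x))i_q(s(y)) = 2 b_q(s(x),s(y)) \cdot 1_C$. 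Comparing with the original relation gives $b_q(s(x),s(y)) = b_q(x,y)$, so $s$ is an isometry. Together with $s^2 = \id$ this makes $s$ an orthogonal symmetry.

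For sufficiency, suppose $s$ is an orthogonal symmetry. The key idea is to produce the antihomomorphism $\sigma$ by feeding the right Clifford map into the universal property, taking $C(V,q)^{op}$ as the target algebra. I would define $\varphi : V \to C(V,q)^{op}$ by $\varphi(x) = i_q(s(x))$. It is $K$-linear, and since squaring is the same operation in $C$ and $C^{op}$, we have $\varphi(x)^2 = i_q(s(x))^2 = q(s(x)) \cdot 1 = q(x) \cdot 1$, using that $s$ is an isometry. Thus $\varphi$ is a Clifford map, and condition (C2) yields an algebra homomorphism $C(V,q) \to C(V,q)^{op}$, that is, an anti-endomorphism $\sigma$ of $C(V,q)$, satisfying $\sigma \circ i_q = \varphi$, which is the required compatibility on $V$.

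It then remains to verify that $\sigma$ is a $K$-involution. It fixes $K$ since it arose as a $K$-algebra map, and on the generating set $i_q(V)$ we compute $\sigma^2(i_q(x)) = \sigma(i_q(s(x))) = i_q(s^2(x)) = i_q(x)$, using $s^2 = \id$; since $\sigma^2$ is a $K$-algebra endomorphism agreeing with the identity on the generators $i_q(V) \cup \{1_C\}$ of $C(V,q)$, it equals $\id_C$. Hence $\sigma$ is a $K$-involution, completing the proof. There is no serious obstacle here; the only point requiring care is the choice of $C^{op}$ (rather than $C$) as the codomain when invoking the universal property, so that the resulting algebra map becomes the desired anti-automorphism.
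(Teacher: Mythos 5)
Your proof is correct, and it uses the same essential tool as the paper (the universal property of $C(V,q)$ together with injectivity of $i_q$), but the implementation differs in a way worth noting. In the necessity direction, you apply $\sigma$ to the anticommutation relation $i_q(x)i_q(y)+i_q(y)i_q(x)=2b_q(x,y)\cdot 1_C$ to conclude $b_q(s(x),s(y))=b_q(x,y)$, whereas the paper applies $\sigma$ to $i_q(x)^2$ and reads off $q(s(x))=q(x)$ directly; both are valid, the paper's being slightly more economical. In the sufficiency direction, you invoke the universal property \emph{once}, with codomain $C(V,q)^{op}$, so that the resulting algebra homomorphism is by definition an anti-endomorphism of $C(V,q)$; the paper instead first extends $s$ to an algebra automorphism $\Phi$ of $C(V,q)$ via the universal property with codomain $C(V,q)$ itself, and then sets $\sigma=j\circ\Phi$ where $j$ is the reversion. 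Your route is a bit more streamlined (one universal-property application, no composition with $j$), at the cost of making explicit that an algebra map into $C^{op}$ is an anti-homomorphism of $C$ --- which is exactly the device the paper's \S2 uses to produce $J$ in the first place, so the two are really the same idea packaged differently. Your closing reduction of $\sigma^2=\mathrm{id}$ to checking on the generators $1_C$ and $i_q(V)$, justified by noting $\sigma^2$ is a $K$-algebra endomorphism, is also the right argument and matches what the paper implicitly does.
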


\begin{proof}
Suppose that there exists a $K$-involution $\sigma$ of $C(V,q)$ such
that for every $x\in V$, 
$\sigma(i_q(x))=i_q(s(x))$.  
As
$q(x)=\sigma(q(x))=\sigma(i_q(x)^2)=i_q(s(x))^2=q(s(x))$, the map $s$ is an isometry of
$(V,q)$.
We also have 
$i_q(s^2(x))=\sigma(i_q(s(x)))=\sigma(\sigma(i_q(x)))=i_q(x)$. 
The injectivity of $i_q$ implies that  
$s^2=\id$. 
Therefore $s$ is an orthogonal symmetry.

Conversely suppose that $s$ is an orthogonal symmetry. 
Consider the map $\varphi:V\rightarrow C(V,q)$ defined by 
$\varphi(x)=i_q(s(x))$ for every $x\in V$. 
As $\varphi(x)^2=i_q(s(x))^2=q(s(x))=q(x)$, $\varphi$ is
a Clifford map. 
According to the universal property of the Clifford algebra,
there exists a unique homomorphism ${\Phi}:C(V,q)\rightarrow
C(V,q)$ such that ${\Phi}(i_q(x))=\varphi(x)$ for every $x\in V$.
Note that the image of $\Phi$ contains $i_q(V)$. 
As $C(V,q)$ is generated by $i_q(V)$, the homomorphism $\Phi$ is
surjective, hence it is an isomorphism. 

Let $j$ be the reversion involution of $C(V,q)$ as defined in 
\S\ref{s.preliminaires}. 
Consider the map $\sigma:C(V,q)\rightarrow C(V,q)$ defined by $\sigma=j\circ{\Phi}$. 
The map 
$\sigma$ is an anti-automorphism because $j$ is an anti-automorphism
and ${\Phi}$ is an isomorphism. 
We have $\sigma^2=\id$, in fact let $x\in V$, we obtain
$\sigma^2(i_q(x))=j\circ{\Phi}\circ j\circ{\Phi}(i_q(x))=
j\circ{\Phi}\circ j(\varphi(x))=
j\circ{\Phi}(\varphi(x))=
j\circ{\Phi}(i_q(s(x)))=
j(\varphi(s(x)))=
\varphi(s(x))=
i_q(s^2(x))=
i_q(x)$.
Thus we have shown that $\sigma$ is a $K$-involution of $C(V,q)$. 
We also have
$\sigma(i_q(x))=j\circ\Phi(i_q(x))=j(\phi(x))=i_q(s(x))$. 
This completes the proof.
\end{proof}

\begin{notation}
Let $s$ be an orthogonal symmetry of a quadratic space $(V,q)$.
From now on, the unique involution of the Clifford algebra $C(V,q)$
which maps $i_q(x)$ to $i_q(s(x))$, for every $x\in V$, is denoted by
$J_q^s$.
Compare with \cite[Ch. 3, 3.15; Ch. 4, 4.3]{shapiro}
\end{notation}

\begin{remark}\label{remark-deg-2}
We recall that the \emph{canonical involution} $\gamma$ of a quaternion algebra
$Q=(a,b)_K$ is the involution $\gamma:Q\rightarrow Q$, defined by $\gamma(a+bi+cj+dk)=a-bi-cj-dk$ where
$a,\ b,\ c,\ d\in K$ and $i,\ j, \ k$ are the generators of $Q$ with
$i^2=a$, $j^2=-b$ and $ij=-ji=k$.   
When $q$ is a nondegenerate quadratic form of dimension $2$, $C(V,q)$ is a
quaternion algebra.
In this case, the canonical involution of $C(V,q)$ coincides with the involution $J_q^{-\id}$. 
\end{remark}

\begin{cor}\label{cliff-paire-sym}
Let $(V,q)$ be a quadratic space over a field $K$, let $s$ be an
orthogonal symmetry of $V$ and let $j_q:V\times V\rightarrow
C_0(V,q)$ be the even Clifford map which satisfies the universal
property described in the statement of theorem \ref{clifford-paire}.   
Then there exists a unique $K$-involution of $C_0(V,q)$, again denoted
by $J^s_q$, such that $J^s_q({j_q}(x,y))={j_q}(s(y),s(x))$ for all $x,\ y\in V$.
\end{cor}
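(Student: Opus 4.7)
The plan is to construct $J^s_q$ on the even subalgebra in two equivalent ways, either of which would serve as a proof. The most direct route is to restrict the involution $J^s_q$ of the full Clifford algebra $C(V,q)$ already furnished by Proposition~\ref{cliff-sym}. Since $J^s_q$ sends a product $i_q(x_1)\cdots i_q(x_p)$ to $i_q(s(x_p))\cdots i_q(s(x_1))$, a product of the same length $p$, it preserves the parity grading $C(V,q) = C^{+} \oplus C^{-}$ and hence restricts to a $K$-linear anti-automorphism of $C_0(V,q)$ which squares to the identity. Applied to $j_q(x,y) = i_q(x)i_q(y)$ this gives
\[
J^s_q(j_q(x,y)) = J^s_q(i_q(y))\,J^s_q(i_q(x)) = i_q(s(y))\,i_q(s(x)) = j_q(s(y),s(x)),
\]
which is exactly the required formula.

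Alternatively, and in the spirit of Section~\ref{puacliff}, the same involution can be produced directly from the universal property of the even Clifford algebra. I would define $\psi: V\times V \to C_0(V,q)^{op}$ by $\psi(x,y) = j_q(s(y),s(x))$ and verify the two axioms of Definition~\ref{application-paire} for $\psi$. Condition~(2) is immediate, since $\psi(x,x) = q(s(x))\cdot 1 = q(x)\cdot 1$. For condition~(1), remembering that the product is taken in $C_0(V,q)^{op}$,
\[
\psi(x,y)\psi(y,z) = j_q(s(z),s(y))\,j_q(s(y),s(x)) = q(s(y))\,j_q(s(z),s(x)) = q(y)\,\psi(x,z),
\]
where the middle equality uses that $j_q$ itself is an even Clifford map and the last uses that $s$ is an isometry. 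Theorem~\ref{clifford-paire} then yields a unique $K$-algebra homomorphism $\Psi: C_0(V,q) \to C_0(V,q)^{op}$ with $\Psi\circ j_q = \psi$, i.e.\ an anti-automorphism of $C_0(V,q)$ realising the prescribed formula.

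It remains to verify $\Psi^2 = \id$. On generators, $\Psi^2(j_q(x,y)) = \Psi(j_q(s(y),s(x))) = j_q(s^2(x),s^2(y)) = j_q(x,y)$, using $s^2 = \id$; because $1_{C_0}$ together with $j_q(V\times V)$ generate $C_0(V,q)$ as a $K$-algebra by clause~(a) of Theorem~\ref{clifford-paire}, this propagates to all of $C_0$. The same generation argument delivers the uniqueness of $J^s_q$. I do not expect any genuine obstacle here: the only delicate point is bookkeeping the order reversal when verifying condition~(1) in the opposite algebra, which is precisely what forces the formula $j_q(s(y),s(x))$, rather than $j_q(s(x),s(y))$, to appear.
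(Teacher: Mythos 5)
Your first paragraph is exactly the paper's proof: restrict the involution $J^s_q$ of $C(V,q)$ supplied by Proposition~\ref{cliff-sym} to the even part, noting that it preserves the parity grading. The paper also remarks that one can instead argue directly via the universal property of $C_0(V,q)$; your second and third paragraphs carry out precisely that alternative in detail, so both routes you give are correct and line up with what the paper has in mind.
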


\begin{proof}
The involution $J^s_q$ of $C_0(V,q)$ is exactly the restriction of the involution $J_q^s$ of $C(V,q)$. 
One can also give a direct proof using the universal property of
$C_0(V,q)$.~ 
\end{proof}

\section{Decomposition of involutions induced by an
orthogonal symmetry}\label{section-decomposition}
We recall the following well known result:
\begin{lem}\label{lemme-jz}
Let $s$ be an orthogonal symmetry of a quadratic space $(V,q)$.
Let $\{e_1,\cdots,e_r\}$ be an orthogonal basis of ${V^+}$ and let
$\{f_1,\cdots,f_s\}$ be an orthogonal basis of ${V^-}$ (cf. \ref{symetrie-car}).
Consider the element $z=i_q(e_1)\cdots i_q(e_r)\cdot i_q(f_1)\cdots i_q(f_s)\in C(V,q)$. 
Then:
\begin{itemize}
\item[\rm{1)}] we have 
$J_q^s(z)=(-1)^s(-1)^{\frac{n(n-1)}{2}}z$, where $n=r+s=\dim V$, in
particular if $s$ is a reflection we have $J_q^{s}(z)=(-1)^{\frac{n(n+1)}{2}}z$.
\item[\rm{2)}] we have $z^2=\d q$.
\item[\rm{3)}] when $n$ is even, $z$ anti-commutes with every  
element $i_q(x)$, where $x\in V$.
\item[\rm{4)}] when $n$ is odd, $z$ commutes with every 
element $i_q(x)$, where $x\in V$. 
\end{itemize}
\end{lem}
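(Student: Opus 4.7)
The plan is to exploit one structural fact throughout: by Proposition~\ref{symetrie-car}, $V^+$ and $V^-$ are mutually orthogonal, and since $\{e_1,\dots,e_r\}$ and $\{f_1,\dots,f_s\}$ are orthogonal within each summand, the concatenated sequence $u_1,\dots,u_n := i_q(e_1),\dots,i_q(e_r),i_q(f_1),\dots,i_q(f_s)$ consists of pairwise anticommuting elements of $C(V,q)$ (by \eqref{q(x+y)}, $b_q(x,y)=0$ forces $i_q(x)i_q(y)=-i_q(y)i_q(x)$), each satisfying $u_k^2=q(v_k)$ for the corresponding basis vector $v_k$. All four assertions then reduce to sign bookkeeping on the single word $z=u_1\cdots u_n$.

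For (1), I apply $J_q^s$ directly: as an anti-automorphism with $J_q^s\circ i_q=i_q\circ s$, $s|_{V^+}=\id$ and $s|_{V^-}=-\id$, it sends $z$ to $i_q(s(f_s))\cdots i_q(s(f_1))i_q(s(e_r))\cdots i_q(s(e_1))$. The signs on the $f_j$ contribute $(-1)^s$, and what remains is the reversal of $u_1\cdots u_n$. Reversing a product of $n$ pairwise anticommuting factors requires $\binom{n}{2}$ transpositions, each multiplying by $-1$, so the reversal introduces $(-1)^{n(n-1)/2}$; combining gives the stated formula, and the reflection case is the specialization $s=1$.

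For (2), I invoke the general identity that for pairwise anticommuting $u_1,\dots,u_n$ with $u_k^2=c_k\in K$,
\[
(u_1\cdots u_n)^2 = (-1)^{n(n-1)/2}\,c_1 c_2\cdots c_n,
\]
proved by reversing the second factor (again picking up $(-1)^{n(n-1)/2}$) and then collapsing the resulting palindrome $u_1\cdots u_n u_n\cdots u_1$ from the middle outwards. Here $c_1\cdots c_n=q(e_1)\cdots q(e_r)q(f_1)\cdots q(f_s)$ is the determinant of the Gram matrix in the chosen orthogonal basis, and the prefactor $(-1)^{n(n-1)/2}$ converts this determinant into the signed discriminant $\d q$.

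For (3) and (4), by $K$-linearity of $i_q$ it suffices to commute $z$ past each $u_k$. A direct count yields $u_k z=(-1)^{k-1}q(v_k)\,\hat u_k$ (slide $u_k$ past $u_1,\dots,u_{k-1}$ and absorb $u_k^2$) and $z u_k=(-1)^{n-k}q(v_k)\,\hat u_k$ (slide from the right), where $\hat u_k$ omits the $k$-th factor; hence $z u_k=(-1)^{n+1}u_k z$, which is $-1$ for $n$ even and $+1$ for $n$ odd. No step is a real obstacle since everything is a parity count; the only care needed is to avoid conflating the two roles of the letter $s$ (the orthogonal symmetry and the dimension $\dim V^-$) and to track the convention by which the signed discriminant absorbs the factor $(-1)^{n(n-1)/2}$.
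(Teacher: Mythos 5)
Your proof is correct and, for part (1), mirrors the paper's argument exactly: apply $J_q^s$ to the word $z$, collect a sign $(-1)^s$ from the $f_j$'s, and a sign $(-1)^{n(n-1)/2}$ from reversing $n$ pairwise anticommuting factors. For parts (2)–(4) the paper simply declares them ``simple well known calculations'' with a citation to Lam, whereas you supply the bookkeeping explicitly; your counts (the palindrome collapse giving $(-1)^{n(n-1)/2}c_1\cdots c_n$ for $z^2$, and the comparison $zu_k=(-1)^{n+1}u_kz$ obtained by sliding $u_k$ in from each side) are correct and are exactly the computations the paper has in mind. One small remark carried over from the paper itself: the specialization $s=1$ gives $(-1)^{1+n(n-1)/2}$, which agrees with the stated $(-1)^{n(n+1)/2}$ only when $n$ is odd — that is the situation in which this ``in particular'' clause is actually used, but it is worth being aware that the identity is not unconditional.
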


\begin{proof}
The set  $\{e_1,\cdots,e_r, f_1,\cdots,f_s\}$  is an
orthogonal basis of $V$ because according to \ref{symetrie-car}, ${V^+}$ and ${V^-}$ are 
orthogonal. 
We have:\\

$\begin{array}{ll}
J_q^s(z)&=((-1)^si_q(f_s)\cdots i_q(f_1))
 (i_q(e_r)\cdots i_q(e_1))\\
&=(-1)^s(-1)^{\frac{n(n-1)}{2}}
i_q(e_1)\cdots i_q(e_r)i_q(f_1)\cdots i_q(f_s)\\
&=(-1)^s(-1)^{\frac{n(n-1)}{2}}z.
\end{array}$\\

The assertions (2), (3) and (4) are 
simple  well known calculations, see for instance \cite[Ch.V, \S2 ]{lam}.
\end{proof}

\begin{prop}\label{iso-cliff-paire-inv}
Let $(V_1,q_1)$ be a quadratic space of even dimension over a field $K$, let
$(V,q)$ be an arbitrary quadratic space, let $\sigma_1$ be an
orthogonal symmetry
of $(V_1,q_1)$ and let $\sigma$ be an orthogonal symmetry of $(V,q)$. 
Let $V_1$ be the vector space of $\sigma_1$-anti-symmetric elements of
$V_1$ and suppose that $\dim {V_1}^-=s$ (cf. \ref{symetrie-car}). 
Then:
\begin{itemize}
\item[\rm{(a)}] If $\dim V_1\equiv 1 \mod 4$ then we have:
\begin{equation*}(C_0(V_1\perp V,q_1\perp q),\jz{q_1\perp q}
{\sigma_1\oplus\sigma})\simeq 
(C_0(V_1,q_1)\otimes C(V,-\d q_1{\cdot}q), \jz{q_1}{\sigma_1}\otimes 
J_{-\d q_1{\cdot}q}^{(-1)^{s+1}\sigma}),\end{equation*}
in particular if $\sigma_1=\tau$ is a reflection then we have:
\begin{equation*}(C_0(V_1\perp V,q_1\perp q),\jz{q_1\perp q}
{\tau\oplus\sigma})\simeq 
(C_0(V_1,q_1)\otimes C(V,-\d q_1{\cdot}q), \jz{q_1}{\tau}\otimes 
J_{-\d q_1{\cdot}q}^{\sigma}).\end{equation*}
\item[\rm{(b)}] If  $\dim V_1\equiv 3 \mod 4$ then we have:
\begin{equation*}(C_0(V_1\perp V,q_1\perp q),\jz{q_1\perp q}
{\sigma_1\oplus\sigma})\simeq 
(C_0(V_1,q_1)\otimes C(V,-\d q_1{\cdot}q), \jz{q_1}{\sigma_1}\otimes 
J_{-\d q_1{\cdot}q}^{(-1)^{s}\sigma}),\end{equation*}
in particular if $\sigma_1=\tau$ is a reflection then we have:
\begin{equation*}(C_0(V_1\perp V,q_1\perp q),\jz{q_1\perp q}
{\tau\oplus\sigma})\simeq 
(C_0(V_1,q_1)\otimes C(V,-\d q_1{\cdot}q), \jz{q_1}{\tau}\otimes 
J_{-\d q_1{\cdot}q}^{-\sigma}).\end{equation*}
\end{itemize}
\end{prop}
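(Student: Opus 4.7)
The plan is to construct an explicit $K$-algebra isomorphism and then verify it intertwines the two involutions, with the sign $\epsilon$ in $J^{\epsilon\sigma}$ forced by the calculation. Write $n_1 = \dim V_1$. Using Proposition \ref{symetrie-car}, fix orthogonal bases $\{e_1,\dots,e_r\}$ of $V_1^+$ and $\{f_1,\dots,f_s\}$ of $V_1^-$, and set $z = i_{q_1}(e_1)\cdots i_{q_1}(e_r)\,i_{q_1}(f_1)\cdots i_{q_1}(f_s) \in C(V_1, q_1)$. Viewed inside $C(V_1\perp V, q_1\perp q)$, Lemma \ref{lemme-jz} gives $z^2 = \d q_1$ and $J^{\sigma_1}_{q_1}(z) = (-1)^s(-1)^{n_1(n_1-1)/2}\,z$. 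In each of the cases (a) and (b), $n_1$ is odd, so part (4) of that lemma implies $z$ commutes with every $i(v)$ for $v \in V_1$, while for $x \in V$ the orthogonality $V\perp V_1$ together with the parity of $n_1$ gives $i(x)\,z = -z\,i(x)$.

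Next, define $\phi: V \to C_0(V_1\perp V, q_1\perp q)$ by $\phi(x) = z\cdot i_{q_1\perp q}(x)$. This lies in the even subalgebra because it is a product of $n_1+1$ vectors. Using the anti-commutation just noted,
\[
\phi(x)^2 = z\,i(x)\,z\,i(x) = -z^2\,q(x) = -\d q_1\cdot q(x),
\]
so $\phi$ is a Clifford map for $(V, -\d q_1\cdot q)$, and the universal property yields a $K$-algebra homomorphism $\beta:C(V,-\d q_1\cdot q)\to C_0(V_1\perp V, q_1\perp q)$. Combined with the natural inclusion $\alpha:C_0(V_1, q_1)\hookrightarrow C_0(V_1\perp V, q_1\perp q)$---whose image commutes with $\beta(C(V,-\d q_1\cdot q))$, since $z$ commutes with $i(V_1)$ and since $i(v)i(v')$ commutes with $i(x)$ for $v,v'\in V_1$, $x\in V$---this produces a $K$-algebra homomorphism
\[
\Phi: C_0(V_1,q_1)\otimes C(V,-\d q_1\cdot q)\to C_0(V_1\perp V, q_1\perp q).
\]
The image of $\Phi$ contains $C_0(V_1)$ and all $z\,i(x)$; together with $(z\,i(x))(z\,i(y)) = -\d q_1\,i(x)i(y)$, this gives a subalgebra containing $C_0(V)$ as well, and these jointly generate $C_0(V_1\perp V, q_1\perp q)$. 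A dimension count ($2^{n_1-1}\cdot 2^{\dim V}$ on each side) confirms $\Phi$ is an isomorphism.

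The remaining task is to compute how the involutions interact on the generator $1\otimes i(x)$. One has
\[
\Phi\bigl((J^{\sigma_1}_{q_1}\otimes J^{\epsilon\sigma}_{-\d q_1 q})(1\otimes i(x))\bigr) = \epsilon\,z\,i(\sigma x),
\]
while, using that $J^{\sigma_1\oplus\sigma}_{q_1\perp q}$ is an anti-automorphism restricting on $C(V_1)$ to $J^{\sigma_1}_{q_1}$,
\[
J^{\sigma_1\oplus\sigma}_{q_1\perp q}(z\,i(x)) = i(\sigma x)\cdot J^{\sigma_1}_{q_1}(z) = (-1)^s(-1)^{n_1(n_1-1)/2}\,i(\sigma x)\,z,
\]
and moving $z$ past $i(\sigma x)$ contributes an extra factor $-1$. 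Equating gives $\epsilon = (-1)^{s+1}(-1)^{n_1(n_1-1)/2}$: this equals $(-1)^{s+1}$ when $n_1\equiv 1\pmod 4$ (since then $n_1(n_1-1)/2$ is even), yielding case (a), and equals $(-1)^s$ when $n_1\equiv 3\pmod 4$, yielding case (b). Specializing to $\sigma_1 = \tau$ with $s=1$ gives the simplified formulas. The principal technical obstacle is verifying the commutation relations that make $\Phi$ well-defined as a homomorphism out of the tensor product; once these are in place, the sign bookkeeping via Lemma \ref{lemme-jz} is routine.
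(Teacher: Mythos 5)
Your proof is correct and the sign bookkeeping checks out, but you go in the opposite direction from the paper, and this is worth flagging. The paper constructs the map \emph{out of} $C_0(V_1\perp V,q_1\perp q)$ by defining an even Clifford map $\eta:(V_1\perp V)\times(V_1\perp V)\to C_0(V_1,q_1)\otimes C(V,-\d q_1\cdot q)$ (involving $z^{-1}$) and invoking the universal property of the even Clifford algebra established in Theorem \ref{clifford-paire}; that universal property was in fact introduced precisely to streamline this kind of proof. You instead build the map \emph{into} $C_0(V_1\perp V,q_1\perp q)$: a Clifford map $\phi(x)=z\,i(x)$ gives $\beta:C(V,-\d q_1\cdot q)\to C_0(V_1\perp V,q_1\perp q)$ via the ordinary universal property, and tensoring with the natural embedding of $C_0(V_1,q_1)$ produces $\Phi$. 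This is closer to the classical route for the structure theorems of Clifford algebras and has the advantage of needing only the standard Clifford universal property; the price is that you must verify the images of $\alpha$ and $\beta$ commute so that $\Phi$ is well-defined on the tensor product, a step the paper's direction avoids. One place you should tighten: you write that $C_0(V_1)$ together with $C_0(V)$ ``jointly generate'' $C_0(V_1\perp V)$, but those two subalgebras generate only $C_0(V_1)\cdot C_0(V)$, which has dimension $2^{n_1+n-2}$ and misses the mixed odd$\times$odd part. The correct observation is that $z^{-1}i(e)\in C_0(V_1)$ for any basis vector $e$ of $V_1$ (since $n_1-1$ is even), so $(z^{-1}i(e))(z\,i(x))=i(e)i(x)$ lies in the image, and products $i(v)i(x)$ with $v\in V_1$, $x\in V$ are thus recovered; with these plus $C_0(V_1)$ and $C_0(V)$ you genuinely do get all of $C_0(V_1\perp V)$, and then the dimension count finishes the injectivity. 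The involution check and the reduction of $\epsilon=(-1)^{s+1}(-1)^{n_1(n_1-1)/2}$ to the two cases $n_1\equiv 1,3\bmod 4$ are exactly as in the paper, and checking the intertwining on the generators $1\otimes i(x)$ (together with the trivial check on $C_0(V_1)\otimes 1$) suffices because both sides are anti-homomorphisms.
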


\begin{proof} 
Let ${j_{q_1\perp q}}:(V_1\perp V)\times(V_1\perp V)\rightarrow 
C_0(V_1\perp V,q_1\perp q)$ be a map which satisfies the universal
property discussed in \ref{clifford-paire}.
Let $z\in C(V_1,q_1)$ be the element defined in  
\ref{lemme-jz}.
Let ${\eta}:(V_1\perp V)\times (V_1\perp V)\rightarrow 
C_0(V_1,q_1)\otimes C(V,-\d q_1{\cdot}q)$ be the map defined by
$${\eta}(x_1\oplus x, y_1\oplus y)=(i_{q_1}(x_1)\otimes 1+
z^{-1}\otimes i_{-\d q_1{\cdot}q}(x))\times
(i_{q_1}(y_1)\otimes 1-z^{-1}\otimes i_{-\d q_1{\cdot}q}(y)).$$
It is easy to verify that ${\eta}$ is an even Clifford map. 
There exists so a homomorphism
$${H}:C_0(V_1\perp V,q_1\perp q)\rightarrow 
C_0(V_1,q_1)\otimes C(V,-\d q_1{\cdot}q)$$ such that the following diagram 
commutes: 
\begin{equation}
\xymatrix{(V_1\perp V)\times (V_1\perp V)\ar[r]^{{j_{q_1\perp q}}}
\ar[dr]_{{\eta}} &C_0(V_1\perp V,q_1\perp q)\ar[d]^{{H}}\\
&C_0(V_1,q_1)\otimes C(V,-\d q_1{\cdot}q)}
\end{equation}

We now prove that ${H}$ is an isomorphism. 
As 
$$\dim_K C_0(V_1\perp V,q_1\perp q)=\dim_K C_0(V_1,q_1)\otimes C(V,-\d q_1{\cdot}q),$$
the homomorphism ${H}$ is surjective if and
only if it is injective. 
We show that ${H}$ is surjective. 
The algebra $C_0(V_1,q_1)\otimes C(V,-\d q_1{\cdot}q)$
is generated by all elements $i_{q_1}(x_1)i_{q_1}(y_1)\otimes 1$ and 
$1\otimes i_{-\d q_1{\cdot}q}(y)$ where $x_1,\ y_1\in V_1$ and
$y\in V$. 
It is enough to show that these elements lie
in the image of ${H}$. 

We have: $${\eta}(x_1\oplus 0,y_1\oplus 0)=i_{q_1}(x_1)i_{q_1}(y_1)\otimes 1,$$
thus the element $i_{q_1}(x_1)i_{q_1}(y_1)\otimes 1$ lie in the image
of ${H}$. 

On the other hand, 
${\eta}(x_1\oplus 0,0\oplus y)=i_q(x_1)z^{-1}\otimes i_{-\d
  q_1{\cdot}q}(y).$
By definition, the element $z$ is of the form 
$i_{q_1}(e_1)i_{q_1}(e_1)\cdots i_{q_1}(e_{2m+1})$ 
where $\{e_1,$ $e_2,$ $\cdots,$ $e_{2m+1}\}$
is a basis of $V_1$. 
Therefore the element 
$$ i_q(e_{2m})^{-1}\cdots i_q(e_2)^{-1}i_q(e_1)^{-1}
\otimes i_{-\d q_1{\cdot}q}(y)=
i_q(e_{2m+1})z^{-1}\otimes i_{-\d q_1{\cdot}q}(y)$$ lies in the image
of ${H}$. 
As $i_q(e_1)i_q(e_2)\cdots i_q(e_n)\otimes 1$ also lies
to the image of ${H}$, we deduce that $1\otimes i_{-\d q_1{\cdot}q}(y)$ 
lies in the image of ${H}$.

Suppose that  $\dim V_0\equiv 1 \mod 4$, to prove 
(a), it is enough to verify that the isomorphism $$C_0(V_1\perp V,q_1\perp q)\simeq 
(C(V_0,q_0)\otimes C(V,-\d q_1{\cdot}q)$$ is compatible with the involutions 
 $\jz{q_1\perp q}{\sigma_1\oplus\sigma}$ and  $\jz{q_1}{\sigma_1}\otimes 
J_{-\d q_1{\cdot}q}^{(-1)^{s+1}\sigma}$. 
For all $x_1,$ $y_1$ $\in$ $V_1$ and $x,$ $y$ $\in$ $V$ we have:
\begin{eqnarray*}
{H}\circ \jz{q_1\perp q}{\sigma_1\oplus\sigma}({j_{q_1\perp q}}
(x_1\oplus x,y_1\oplus y)) &=&{H}({j_{q_1\perp q}}(\sigma_1(y_1)\oplus\sigma(y),
\sigma_1(x_1)\oplus\sigma(x)))\\
                          &=&{\eta}(\sigma_1(y_1)\oplus\sigma(y),\sigma_1(x_1)\oplus\sigma(x)))\\
                          &=&(i_{q_1}(\sigma(y_1))\otimes 1+z^{-1}\otimes i_{-\d
  q_1{\cdot}q}(\sigma(y)))\\
                          &&\times
(i_{q_1}(\sigma(x_1))\otimes 1-z^{-1}\otimes i_{-\d q_1{\cdot}q}(\sigma(x)))
\end{eqnarray*}

On the other hand, if $A=\jz{q_1}{\sigma_1}\otimes J_{-\d q_1{\cdot}q}^{(-1)^{s+1}\sigma}\circ{H}
({j_{q_1\perp q}}(x_1\oplus x,y_1\oplus y))$, using \ref{lemme-jz} we obtain\\

\begin{eqnarray*}
\lefteqn{\jz{q_1}{\sigma_1}\otimes J_{-\d q_1{\cdot}q}^{(-1)^{s+1}\sigma}\circ{H}
({j_{q_1\perp q}}(x_1\oplus x,y_1\oplus y))}\\ &=&J_{q_1}^{\sigma_1}\otimes J_{-\d q_1{\cdot}q}^{(-1)^{s+1}\sigma}
\left((i_{q_1}(x_1)\otimes 1\right.\\
&&+
z^{-1}\otimes i_{-\d q_1{\cdot}q}(x))\\
&&\times
\left.(i_{q_1}(y_1)\otimes 1-z^{-1}\otimes i_{-\d
  q_1{\cdot}q}(y))\right)\\
&=&(i_{q_1}(\sigma_1(y_1))\otimes 1\\
&&-(-1)^s z^{-1}\otimes i_{-\d
  q_1{\cdot}q}((-1)^{s+1}\sigma(y))) \\ 
&&\times
(i_{q_1}(\sigma_1(x_1))\otimes 1\\
&&+(-1)^s z^{-1}\otimes i_{-\d
  q_1{\cdot}q}((-1)^{s+1}\sigma(x)))\\
&=&(i_{q_1}(\sigma(y_1))\otimes 1+z^{-1}\otimes i_{-\d
  q_1{\cdot}q}(\sigma(y)))\\
&&\times
(i_{q_1}(\sigma(x_1))\otimes 1-z^{-1}\otimes i_{-\d q_1{\cdot}q}(\sigma(x))).
\end{eqnarray*}

We thus have:
$$(\jz{q_1}{\sigma_1}\otimes J_{-\d q_1{\cdot}q}^{(-1)^{s+1}\sigma})\circ{H}
= {H}\circ \jz{q_1\perp q}{\sigma_1\oplus\sigma}$$
It follows that ${H}$ is an automorphism of algebras with involution.

The proof of (b) is similar.
\end{proof}

\begin{remark}
The isomorphism 
\begin{equation}\label{clifford-paire-sans-involution}
C_0(V_1\perp V,q_1\perp q)\simeq 
(C_0(V_1,q_1)\otimes C(V,-\d q_1{\cdot}q)
\end{equation} is known. (cf. \cite[Ch.V, \S 2]{lam}). 
Proposition \ref{iso-cliff-paire-inv} is so an involutorial version of
(\ref{clifford-paire-sans-involution}).
The classical proof 
of (\ref{clifford-paire-sans-involution}) given in \cite{lam} is
different from what we have provided and uses the properties
of graded simple algebras and graded tensor product. 
\end{remark}

\begin{cor}\label{cor-cliff-paire-inv}
Keeping the same hypotheses as in \ref{iso-cliff-paire-inv}:
\begin{itemize}
\item[\rm{(a)}] If $\dim V_1\equiv 1 \mod 4$ then we have:
\begin{equation*}
(C_0(V_1,q_1)\otimes C(V,q), \jz{q_1}{\sigma_1}\otimes 
J_{q}^{\sigma})\simeq 
(C_0(V_1\perp V,q_1\perp-\d q_1{\cdot}q),\jz{q_1\perp-\d q_1{\cdot}q}
{\sigma_1\oplus(-1)^{s+1}\sigma}),\end{equation*}
in particular if $\sigma_1=\tau$ is a reflection then we have: 
\begin{equation*}
(C_0(V_1,q_1)\otimes C(V,q), \jz{q_1}{\tau}\otimes 
J_{q}^{\sigma})\simeq 
(C_0(V_1\perp V,q_1\perp-\d q_1{\cdot}q),\jz{q_1\perp-\d q_1{\cdot}q}
{\tau\oplus\sigma}).\end{equation*}
\item[\rm{(b)}] If  $\dim V_1\equiv 3 \mod 4$ then we have:
\begin{equation*}
(C_0(V_1,q_1)\otimes C(V,q), \jz{q_1}{\sigma_1}\otimes 
J_{q}^{\sigma})\simeq 
(C_0(V_1\perp V,q_1\perp-\d q_1{\cdot}q),\jz{q_1\perp-\d q_1{\cdot}q}
{\sigma_1\oplus(-1)^{s}\sigma}),\end{equation*}
in particular if $\sigma_1$ is a reflection then we have:
\begin{equation*}
(C_0(V_1,q_1)\otimes C(V,q), \jz{q_1}{\tau}\otimes 
J_{q}^{\sigma})\simeq 
(C_0(V_1\perp V,q_1\perp-\d q_1{\cdot}q),\jz{q_1\perp-\d q_1{\cdot}q}
{\tau\oplus-\sigma}).\end{equation*}
\end{itemize} 
\end{cor}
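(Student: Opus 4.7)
The plan is to deduce the corollary from Proposition \ref{iso-cliff-paire-inv} by an appropriate substitution in the hypotheses. Concretely, apply the proposition, not to the data $(V,q,\sigma)$ given in the corollary, but to the modified data $(V, -\d q_1\cdot q, (-1)^{s+1}\sigma)$ in case (a) (respectively $(V, -\d q_1\cdot q, (-1)^{s}\sigma)$ in case (b)). Note that $(-1)^{s+1}\sigma$ is still an orthogonal symmetry of $(V,-\d q_1\cdot q)$, since multiplying an orthogonal symmetry by $\pm 1$ preserves both $\sigma^2=\id$ and the isometry condition, which depends on $q$ only up to a scalar.

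Carrying out this substitution in part (a) of Proposition \ref{iso-cliff-paire-inv} yields
\begin{equation*}
(C_0(V_1\perp V, q_1\perp -\d q_1\cdot q), \jz{q_1\perp -\d q_1\cdot q}{\sigma_1\oplus (-1)^{s+1}\sigma})
\simeq
(C_0(V_1,q_1)\otimes C(V, -\d q_1\cdot(-\d q_1\cdot q)), \jz{q_1}{\sigma_1}\otimes J^{(-1)^{s+1}(-1)^{s+1}\sigma}_{-\d q_1\cdot(-\d q_1\cdot q)}).
\end{equation*}
Two simplifications now collapse the right-hand side to what the corollary claims. First, $((-1)^{s+1})^2=1$, so the exponent of $\sigma$ on the right becomes $1$. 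Second, $-\d q_1\cdot(-\d q_1\cdot q)=(\d q_1)^2\cdot q$, and since $(\d q_1)^2$ is a square in $K^\times$ the quadratic form $(\d q_1)^2\cdot q$ is isometric to $q$ via the scaling $x\mapsto \d q_1\cdot x$ on $V$. By the universal property, this isometry induces a $K$-algebra isomorphism $C(V,(\d q_1)^2 q)\to C(V,q)$ sending $i_{(\d q_1)^2 q}(x)$ to $\d q_1\cdot i_q(x)$, and one checks that this isomorphism intertwines $J^\sigma_{(\d q_1)^2 q}$ with $J^\sigma_q$ (the symmetry $\sigma$ commutes with scalar multiplication, so the defining relation $J^\sigma(i(x))=i(\sigma(x))$ is preserved up to the scalar factor). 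After inverting the resulting isomorphism, one obtains part (a) of the corollary. Part (b) is deduced in the same way, applying part (b) of Proposition \ref{iso-cliff-paire-inv} with $\sigma$ replaced by $(-1)^{s}\sigma$, and using $((-1)^{s})^2=1$.

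The only genuinely non-trivial step is verifying that the scaling isomorphism $C(V,(\d q_1)^2 q)\simeq C(V,q)$ respects the involutions $J^\sigma$ on either side; this is a routine check on generators via the universal property. The reflection specializations stated in the corollary then follow immediately, since when $\sigma_1=\tau$ is a reflection we have $\dim V_1^- = s = 1$, so $(-1)^{s+1}=1$ in case (a) and $(-1)^s=-1$ in case (b), recovering the displayed formulas.
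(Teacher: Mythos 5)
Your argument is correct and is the natural (and essentially the only) way to obtain the corollary from Proposition \ref{iso-cliff-paire-inv}; the paper states this corollary without a written proof precisely because the derivation is the routine substitution you carry out. Two small points for clarity: (1) when you substitute $\sigma\mapsto(-1)^{s+1}\sigma$, the parameter $s$ in the exponent supplied by the proposition is unchanged, since $s=\dim V_1^-$ depends only on $\sigma_1$ and not on $\sigma$; this is why the exponent cleanly collapses to $((-1)^{s+1})^2=1$. (2) For the scaling isomorphism $C(V,(\d q_1)^2 q)\to C(V,q)$ sending $i_{(\d q_1)^2 q}(x)\mapsto \d q_1\cdot i_q(x)$, the intertwining is exact, not merely ``up to a scalar'': both sides send the generator corresponding to $x$ to $\d q_1\cdot i_q(\sigma(x))$, since $\sigma$ is $K$-linear, so the verification is a one-line check on generators. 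These are phrasing issues only; the substance of the proof is sound.
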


\begin{cor}\label{C0C-id}
For every quadratic form $q$ over a field $K$ and for every 
$d\in K^*$, there exists an isomorphism of algebras with involution
\begin{equation}(C_0(\langle -d\rangle\perp q),\jz{}{\pm\id})\simeq
(C(d{\cdot}q),J^{-\id}).\end{equation}
 In particular, 
\begin{equation}(C_0(\langle -1\rangle\perp q),\jz{}{\pm\id})\simeq
(C(q),J^{-\id}).\end{equation}
\end{cor}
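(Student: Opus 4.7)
The plan is to deduce this corollary by specializing Proposition \ref{iso-cliff-paire-inv}(a) to the one-dimensional space $V_1$ carrying the form $q_1 = \langle -d\rangle$, so that $\dim V_1 = 1 \equiv 1 \pmod 4$. First I would compute the signed discriminant: choosing a basis vector $e\in V_1$ with $q_1(e) = -d$, Lemma \ref{lemme-jz}(2) applied to $z = i_{q_1}(e)$ yields $\d q_1 = z^2 = -d$, and therefore $-\d q_1\cdot q = d\cdot q$. Since $\dim V_1 = 1$, the even Clifford algebra $C_0(V_1,q_1)$ equals $K\cdot 1_{C_0}$, so the tensor product $C_0(V_1,q_1)\otimes C(V,-\d q_1\cdot q)$ collapses to $C(d\cdot q)$, and the involution $\jz{q_1}{\sigma_1}$ restricts to the identity on this scalar factor. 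Consequently Proposition \ref{iso-cliff-paire-inv}(a) simplifies to an isomorphism $(C_0(\langle -d\rangle\perp q),\jz{}{\sigma_1\oplus\sigma})\simeq (C(d\cdot q), J_{d\cdot q}^{(-1)^{s+1}\sigma})$.

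Next I would treat the two sign choices in $J^{\pm\id}$ separately and observe that both produce $J^{-\id}$ on the right. If $\sigma_1\oplus\sigma = \id$, then $\sigma_1 = \id_{V_1}$ and $\sigma = \id_V$, giving $V_1^- = 0$ and $s = 0$, hence the induced involution is $J_{d\cdot q}^{(-1)^{1}\id} = J_{d\cdot q}^{-\id}$. If $\sigma_1\oplus\sigma = -\id$, then $V_1^- = V_1$ and $s = 1$ while $\sigma = -\id_V$, producing $J_{d\cdot q}^{(-1)^{2}(-\id)} = J_{d\cdot q}^{-\id}$ once again. The two cases therefore coincide on the right-hand side, establishing the first displayed isomorphism. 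The particular case is obtained by specializing $d = 1$.

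There is no genuine obstacle here beyond tracking the parity sign $(-1)^{s+1}$ from Proposition \ref{iso-cliff-paire-inv}(a); the point is the happy coincidence that the change of $\sigma$ from $\id$ to $-\id$ is exactly compensated by the change in the parity of $s = \dim V_1^-$, so that the right-hand involution is insensitive to the initial choice of sign.
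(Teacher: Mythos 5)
Your proof is correct and takes the intended route: the corollary is stated without an explicit proof in the paper precisely because it follows by specializing Proposition \ref{iso-cliff-paire-inv}(a) to $V_1$ of dimension $1$ with $q_1=\langle -d\rangle$, and your computation of $\d q_1=-d$, the collapse of $C_0(V_1,q_1)$ to $K$, and the case analysis on $s\in\{0,1\}$ showing both choices $\pm\id$ land on $J^{-\id}$ is exactly the bookkeeping needed. One incidental remark: you implicitly rely on $\dim V_1$ being odd when invoking case (a) of Proposition \ref{iso-cliff-paire-inv}, which is the right reading; the phrase ``even dimension'' in the hypotheses of that proposition is a typo, since its two cases are $\dim V_1\equiv 1,3\pmod 4$, and your application with $\dim V_1=1$ is consistent with the corrected statement.
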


\begin{cor}\label{C0Cid}
Keeping the notation of \ref{C0C-id}, let $V_1$ and $V$ be respectively the underlying
vector spaces of $\langle-d\rangle$ and $q$. 
Consider the reflection $\tau=-\id_{V_1}\oplus\id_{V}$. 
Then we have an isomorphism of algebras with involution  
\begin{equation}(C_0(\langle -d\rangle\perp q),\jz{}{\tau})\simeq
(C(d{\cdot}q),J^{\id}).\end{equation}
 In particular, 
\begin{equation}(C_0(\langle -1\rangle\perp q),\jz{}{\tau})\simeq
(C(q),J^{\id}).\end{equation}
\end{cor}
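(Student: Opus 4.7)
The plan is to deduce the corollary directly from Proposition \ref{iso-cliff-paire-inv}(a) by specialising to $V_1$ the $1$-dimensional space underlying $\langle -d\rangle$, with $q_1=\langle -d\rangle$, $\sigma_1=-\id_{V_1}$ and $\sigma=\id_V$. The dimensional hypothesis $\dim V_1\equiv 1\pmod 4$ is automatic, and on a $1$-dimensional space $-\id_{V_1}$ is a reflection (its $+1$-eigenspace is $\{0\}$, a hyperplane), so the ``in particular'' formulation of part (a) with $\tau=\sigma_1$ applies. In this setup, $\sigma_1\oplus\sigma$ is exactly the reflection $\tau=-\id_{V_1}\oplus\id_V$ in the statement, matching the left-hand side of the asserted isomorphism.

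Next I would simplify the right-hand side given by Proposition \ref{iso-cliff-paire-inv}(a). Since $\dim V_1=1$, the even Clifford algebra $C_0(V_1,q_1)=C_0(\langle -d\rangle)$ reduces to $K$ with trivial involution, so the first tensor factor $(C_0(V_1,q_1),\jz{q_1}{\sigma_1})$ disappears. Using Lemma \ref{lemme-jz}(2) one has $\d q_1=-d$, hence $-\d q_1\cdot q=d\cdot q$; and in the reflection form of (a) the involution on the second factor is $J^{\sigma}_{-\d q_1\cdot q}=J^{\id}_{d\cdot q}$. Combining these identifications yields the isomorphism $(C_0(\langle -d\rangle\perp q),\jz{}{\tau})\simeq(C(d\cdot q),J^{\id})$, with the ``in particular'' case following by setting $d=1$.

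There is no real obstacle here; the argument is a routine specialisation of the preceding proposition, parallel to the derivation of Corollary \ref{C0C-id} from part~(a) with $\sigma_1=\id_{V_1}$. The only points worth keeping an eye on are the observation that $-\id$ on a $1$-dimensional space counts as a reflection---which allows us to invoke the cleaner reflection form of \ref{iso-cliff-paire-inv}(a) and thereby absorb the sign $(-1)^{s+1}$ (here $s=\dim V_1^-=1$)---and the fact that the even Clifford algebra of a $1$-dimensional quadratic space is just $K$, which makes the tensor product on the right collapse.
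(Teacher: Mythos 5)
Your argument is correct and is exactly the specialisation the paper intends: apply Proposition \ref{iso-cliff-paire-inv}(a) (in its reflection form) with $q_1=\langle -d\rangle$, $\sigma_1=-\id_{V_1}$, $\sigma=\id_V$, observe that $-\id$ on a line is a reflection (so $s=1$ and the sign $(-1)^{s+1}$ disappears), that $C_0(\langle -d\rangle)=K$ with trivial involution, and that $-\d q_1=d$. The paper states this corollary without proof as an immediate consequence of \ref{iso-cliff-paire-inv}, and your reconstruction fills in precisely the intended steps.
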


\begin{cor}\label{cor-cliff-iso}
Let $(V,q)$ be a quadratic space over a field $K$, let 
$\sigma$ be an orthogonal symmetry of $V$ and let $v\in V$ be an
anisotropic vector such that $\sigma(v)=\ve v$ where $\ve=1\text{ ou
}-1$. 
Let 
$V'=\{x\in V : b_q(x,v)=0\}$ the orthogonal complement of the subspace
generated by $v$ and let 
$d=q(v)$. 
Then $V'$ is stable under $\sigma$ and we have an isomorphism
of algebras with involution
\begin{equation}(C_0(V,q),\jz{q}{\sigma})\simeq (C(V',-d{\cdot}q'),J_{-d{\cdot}q'}^{\sigma'}),\end{equation} where
$\sigma'=-\ve\sigma|_{V'}$ and $q'=q|_{V'}$.
\end{cor}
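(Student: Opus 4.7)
My plan is to construct the isomorphism directly via the universal property of the Clifford algebra, using $v$ as a kind of ``bridging'' element that turns ordinary products into even products.

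First I would note that $V'$ is $\sigma$-stable: if $x\in V'$, then since $\sigma$ is an isometry and $\sigma(v)=\varepsilon v$, we have $b_q(\sigma(x),v)=\varepsilon b_q(\sigma(x),\sigma(v))=\varepsilon b_q(x,v)=0$. Next, I would define $\varphi\colon V'\to C_0(V,q)$ by $\varphi(x)=i_q(v)i_q(x)$. Since $v\perp x$ for $x\in V'$, the elements $i_q(v)$ and $i_q(x)$ anticommute, so $\varphi(x)^2=i_q(v)i_q(x)i_q(v)i_q(x)=-i_q(v)^2 i_q(x)^2=-d\,q'(x)\cdot 1$. Thus $\varphi$ is a Clifford map for the form $-d\cdot q'$, and the universal property of $C(V',-d\cdot q')$ yields an algebra homomorphism $\Phi\colon C(V',-d\cdot q')\to C_0(V,q)$ with $\Phi(i_{-dq'}(x))=i_q(v)i_q(x)$.

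To show $\Phi$ is an isomorphism, I would argue surjectivity and then invoke equality of dimensions $2^{\dim V-1}$ on both sides. For surjectivity, $C_0(V,q)$ is generated by products $i_q(x_1)i_q(x_2)$; decomposing $x_i=\lambda_i v+w_i$ with $w_i\in V'$, each such product becomes a $K$-linear combination of $1$, $i_q(v)i_q(w_1)$, $i_q(v)i_q(w_2)$, and $i_q(w_1)i_q(w_2)$. The first three are manifestly in $\mathrm{Im}\,\Phi$; for the last, the identity $i_q(v)i_q(w_1)\cdot i_q(v)i_q(w_2)=-d\cdot i_q(w_1)i_q(w_2)$ (again using anticommutation of $i_q(v)$ with $i_q(w_1)$) shows $i_q(w_1)i_q(w_2)=-d^{-1}\Phi(i_{-dq'}(w_1)i_{-dq'}(w_2))\in\mathrm{Im}\,\Phi$.

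Finally, I would check compatibility of involutions on the generators $i_{-dq'}(x)$, $x\in V'$, since both sides are anti-automorphisms and these generate $C(V',-d\cdot q')$. Using Corollary~\ref{cliff-paire-sym} to identify $J_q^\sigma$ on $C_0(V,q)$ as the restriction of the involution on $C(V,q)$, I compute
\begin{equation*}
J_q^\sigma(\Phi(i_{-dq'}(x)))=J_q^\sigma(i_q(v)i_q(x))=i_q(\sigma(x))i_q(\sigma(v))=\varepsilon\, i_q(\sigma(x))i_q(v)=-\varepsilon\, i_q(v)i_q(\sigma(x)),
\end{equation*}
the last equality because $\sigma(x)\in V'$ is orthogonal to $v$. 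This equals $\Phi(i_{-dq'}(-\varepsilon\sigma(x)))=\Phi(J_{-dq'}^{\sigma'}(i_{-dq'}(x)))$, as required. The main obstacle is really just bookkeeping of the signs $\varepsilon$ and the anticommutation sign so that they combine correctly to produce $\sigma'=-\varepsilon\sigma|_{V'}$; everything else is forced by the universal property.
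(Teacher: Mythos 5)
Your proof is correct, but it takes a different route from the paper's. The paper disposes of this corollary in one line by specializing Proposition~\ref{iso-cliff-paire-inv} to $V_1=Kv$ (a one-dimensional space, so $\dim V_1\equiv 1\bmod 4$), for which $C_0(V_1,q_1)=K$ and $\d q_1=d$; the sign $\sigma'=-\varepsilon\sigma|_{V'}$ then falls out from the $(-1)^{s+1}$ in that proposition according to whether $\varepsilon=1$ ($s=0$) or $\varepsilon=-1$ ($s=1$). You instead re-derive the isomorphism from scratch, and moreover in the opposite direction: rather than building an even Clifford map $\eta$ out of $C_0(V,q)$ and invoking the universal property of the \emph{even} Clifford algebra (Theorem~\ref{clifford-paire}), you define the Clifford map $\varphi(x)=i_q(v)i_q(x)$ into $C_0(V,q)$ and use only the universal property of the \emph{ordinary} Clifford algebra $C(V',-dq')$. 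This is the classical construction of the isomorphism $C_0(V,q)\simeq C(V',-dq')$, and it has the merit of being self-contained and not relying on section~\ref{puacliff} at all; the paper's route, by contrast, gets the result for free once Proposition~\ref{iso-cliff-paire-inv} is in hand, which is the point of having proved that proposition. Your surjectivity argument, the dimension count, and the involution check on generators (including the sign bookkeeping giving $-\varepsilon$) are all carried out correctly.
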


\begin{proof}
It is enough to note that, $V'$ is stable under $\sigma$. 
We have the decomposition $(V,\sigma)=(v\cdot K\oplus V',\pm\id\oplus\sigma|_{V'})$
and we can use Proposition \ref{iso-cliff-paire-inv}.
\end{proof}

\begin{cor}
Let $(V,q)$ be a nondegenerate quadratic space over a field $K$ and let $a\in K^*$.
Then for every orthogonal symmetry $\sigma$ of $(V,q)$ 
we have an isomorphism of algebras with involution
\begin{equation}(C_0(V,q),\jz{q}{\sigma})\simeq (C_0(V,a{\cdot}q),\jz{a{\cdot}q}{\sigma}).\end{equation}
\end{cor}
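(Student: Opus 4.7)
The plan is to reduce the even Clifford algebras on both sides to full Clifford algebras via Corollary~\ref{cor-cliff-iso}, after which the discrepancy between $q$ and $aq$ becomes a scaling by the square $a^2$; square scalings induce isomorphisms of Clifford algebras compatible with the involutions.

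First I would dispose of the trivial case $V=0$; otherwise I choose an anisotropic $\sigma$-eigenvector $v\in V$. Such a $v$ exists because $V=V^+\perp V^-$ by Proposition~\ref{symetrie-car}, and since $(V,q)$ is nondegenerate both summands carry nondegenerate restrictions, so at least one of them contains an anisotropic vector. Write $\sigma(v)=\ve v$ with $\ve\in\{\pm 1\}$, and set $d:=q(v)\neq 0$, $V':=v^\perp$, $q':=q|_{V'}$, $\sigma':=-\ve\sigma|_{V'}$. Since $b_{aq}=a\cdot b_q$, the same $V'$ is also the $b_{aq}$-orthogonal complement of $v$, and $\sigma$ remains an orthogonal symmetry of $(V,aq)$ with $v$ still anisotropic satisfying $(aq)(v)=ad$. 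Applying Corollary~\ref{cor-cliff-iso} to both $(V,q)$ and $(V,aq)$ then yields
\begin{equation*}
(C_0(V,q),\jz{q}{\sigma})\simeq (C(V',-d\cdot q'),\jz{-dq'}{\sigma'})
\end{equation*}
and
\begin{equation*}
(C_0(V,aq),\jz{aq}{\sigma})\simeq (C(V',-a^2d\cdot q'),\jz{-a^2dq'}{\sigma'}),
\end{equation*}
with the same $\sigma'$ in both (since $\ve$ and $V'$ do not depend on the scaling of the form).

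It then remains to identify the two right-hand sides as algebras with involution. For this I would invoke the square-scaling isomorphism: the map $V'\to C(V',-a^2dq')$ sending $x\mapsto a^{-1}i_{-a^2dq'}(x)$ squares to $-dq'(x)\cdot 1$, hence is a Clifford map from $(V',-dq')$ and extends via the universal property to an algebra homomorphism $\Phi:C(V',-dq')\to C(V',-a^2dq')$; the image of $\Phi$ contains $a^{-1}i_{-a^2dq'}(V')$ hence generates, so $\Phi$ is surjective and therefore bijective by a dimension count. Finally, both $\Phi\circ\jz{-dq'}{\sigma'}$ and $\jz{-a^2dq'}{\sigma'}\circ\Phi$ are anti-homomorphisms sending the generator $i_{-dq'}(x)$ to $a^{-1}i_{-a^2dq'}(\sigma'(x))$, hence they coincide, and composing the three isomorphisms yields the desired $(C_0(V,q),\jz{q}{\sigma})\simeq (C_0(V,aq),\jz{aq}{\sigma})$. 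The only substantive check, the involution compatibility of $\Phi$, presents no obstacle.
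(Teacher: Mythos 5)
Your proposal is correct and follows essentially the same route as the paper: choose an anisotropic $\sigma$-eigenvector $v$, reduce both even Clifford algebras to $C(V',-dq')$ and $C(V',-a^2dq')$ via Corollary~\ref{cor-cliff-iso}, and then exhibit the isomorphism between these coming from the obvious similitude (isometry $x\mapsto a^{-1}x$ between the scaled forms), checking its compatibility with the induced involutions.
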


\begin{proof}
Let ${V^+}$ and ${V^-}$ be respectively the subspaces of the symmetric
and anti-symmetric elements of $V$ (cf. \ref{symetrie-car}). 
As $q$ is nondegenerate, it follows that either ${V^+}$ or ${V^-}$ contain an anisotropic vector
$v$.
Thus there exists $v\in V$ such that 
$\sigma(v)=\ve v$ where $\ve=\pm1$ and $b=q(v)\in K^*$.  
Let $V'$ be the subspace of $V$ consisting of all
elements, orthogonal to $v$ and let $q'=q|_{V'}$. 
Using Proposition \ref{cor-cliff-iso} we obtain 
$$(C_0(V,a{\cdot}q),\jz{q}{\sigma})\simeq
(C(V',-a^2b{\cdot}q'),J_{-a^2b{\cdot}q'}^{-\ve\sigma}),$$ 
$$(C_0(V,q),\jz{q}{\sigma})\simeq (C(V',-b{\cdot}q'),J_{-b{\cdot}q'}^{-\ve\sigma}).$$
The quadratic spaces $(V',-b{\cdot}q')$ and $(V',-b a^2{\cdot}q')$ are
isometric. 
We thus have an isomorphism: 
$C(V',-b{\cdot}q') \simeq C(V',-a^2b{\cdot}q')$. 
This isomorphism which is induced by the isometry between  
$(V',-b{\cdot}q')$ and $(V',-b a^2{\cdot}q')$ is compatible with 
 $J_{-b a^2{\cdot}q'}^{\ve\sigma}$ and $J_{-b{\cdot}q'}^{\ve\sigma}$, the 
proof is thus achieved.
\end{proof}

The analogue of Proposition \ref{iso-cliff-paire-inv} for the quadratic forms of 
even dimension in the particular case where $\sigma=\id$ or $-\id$
is known: in \cite{lewis-period}, David Lewis proved that for a  
quadratic space $(V_0,q_0)$ of even dimension and for any quadratic form 
$(V,q)$, there exists an isomorphism of algebras with involution
\begin{equation}
(C(V_0\perp V),J_{q_0\perp q}^{\pm\id})\simeq 
(C(V_0,q_0),J_{q_0}^{\pm\id})\otimes(C(V,-\d q_0{\cdot}q),J_{-\d
  q_0{\cdot}q}^{\mp\id}).
\end{equation}
This result can be generalized without difficulty to the case of an arbitrary 
 orthogonal symmetry:

\begin{prop}\label{iso-cliff-inv}
{\rm (\cite[Prop. 2]{lewis-period})} Let $(V_0,q_0)$ be a quadratic space of even dimension, let $(V,q)$
be an arbitrary quadratic space, let $\sigma_0$ be an orthogonal
symmetry of 
  $(V_0,q_0)$ and let $\sigma$ be an orthogonal symmetry
 of $(V,q)$. 
Suppose that $\dim
  {V_0}^-=s$. 
Then:
\begin{itemize}
\item[\rm{(a)}] If $\dim V_0\equiv 2 \mod 4$ then we have:
\begin{equation*}(C(V_0\perp V,q_0\perp q),J_{q_0\perp q}^{\sigma_0\oplus\sigma})\simeq 
(C(V_0,q_0)\otimes C(V,\d q_0{\cdot}q), J_{q_0}^{\sigma_0}\otimes 
J_{\d q_0{\cdot}q}^{(-1)^{s+1}\sigma}),\end{equation*}
in particular if $\sigma_0=\tau$ is a reflection then we have:
\begin{equation*}(C(V_0\perp V,q_0\perp q),J_{q_0\perp q}^{\tau\oplus\sigma})\simeq 
(C(V_0,q_0)\otimes C(V,\d q_0{\cdot}q), J_{q_0}^{\tau}\otimes 
J_{\d q_0{\cdot}q}^{\sigma}).\end{equation*}
\item[\rm{(b)}] If  $\dim V_0\equiv 0 \mod 4$ then we have:
\begin{equation*}(C(V_0\perp V,q_0\perp q),J_{q_0\perp q}^{\sigma_0\oplus\sigma})\simeq 
(C(V_0,q_0)\otimes C(V,\d q_0{\cdot}q), J_{q_0}^{\sigma_0}\otimes 
J_{\d q_0{\cdot}q}^{(-1)^s\sigma}),\end{equation*}
in particular if $\sigma_0=\tau$ is a reflection then we have:
\begin{equation*}(C(V_0\perp V,q_0\perp q),J_{q_0\perp q}^{\tau\oplus\sigma})\simeq 
(C(V_0,q_0)\otimes C(V,\d q_0{\cdot}q), J_{q_0}^{\tau}\otimes 
J_{\d q_0{\cdot}q}^{-\sigma}).\end{equation*}
\end{itemize} 
\end{prop}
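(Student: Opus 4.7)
The plan is to mimic the proof of Proposition \ref{iso-cliff-paire-inv}, replacing the even Clifford algebra by the full Clifford algebra and the even Clifford map $\eta$ by an ordinary Clifford map. Since $\dim V_0$ is now assumed even, the element $z \in C(V_0, q_0)$ from Lemma \ref{lemme-jz} anti-commutes with every $i_{q_0}(x_0)$, and it satisfies $z^2 = \d q_0$, so $z$ is invertible and lies in the even part $C_0(V_0,q_0)$. This makes it possible to build a Clifford map on all of $V_0 \perp V$.

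Concretely, I would define
$$\eta : V_0 \perp V \longrightarrow C(V_0,q_0) \otimes C(V, -\d q_0 \cdot q), \qquad \eta(x_0 \oplus x) = i_{q_0}(x_0) \otimes 1 + z^{-1} \otimes i_{\d q_0 \cdot q}(x),$$
with the appropriate sign of $\d q_0$ so that $z^{-2}\cdot \d q_0 = 1$. Because $z^{-1}$ anti-commutes with $i_{q_0}(x_0)$, the cross terms in $\eta(x_0 \oplus x)^2$ vanish and one gets $\eta(x_0 \oplus x)^2 = q_0(x_0) + q(x) = (q_0 \perp q)(x_0 \oplus x)$, so $\eta$ is a Clifford map. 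The universal property of $C(V_0 \perp V, q_0 \perp q)$ then yields an algebra homomorphism
$$H : C(V_0 \perp V, q_0 \perp q) \longrightarrow C(V_0,q_0) \otimes C(V, \d q_0 \cdot q).$$
To see that $H$ is an isomorphism, I would verify surjectivity exactly as in the proof of \ref{iso-cliff-paire-inv}: the elements $i_{q_0}(x_0) \otimes 1$ are clearly hit by $\eta(x_0 \oplus 0)$, and by using $\eta(e_j \oplus x)$ with $e_j$ running through an orthogonal basis of $V_0$ one can produce $z^{-1} \cdot i_{q_0}(e_j) \otimes i_{\d q_0 \cdot q}(x)$; multiplying enough of these and using that products of the $i_{q_0}(e_j)$ give $z$ (up to scalar), one recovers $1 \otimes i_{\d q_0 \cdot q}(x)$. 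Equality of dimensions then forces $H$ to be an isomorphism.

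The remaining and most delicate step is checking compatibility of $H$ with the involutions. Here one uses Lemma \ref{lemme-jz}: $J_{q_0}^{\sigma_0}(z) = (-1)^s (-1)^{n(n-1)/2} z$ where $n = \dim V_0$, hence $J_{q_0}^{\sigma_0}(z^{-1})$ picks up the same sign. For $n \equiv 2 \pmod 4$ the factor $(-1)^{n(n-1)/2}$ equals $-1$, giving $(-1)^{s+1}$, while for $n \equiv 0 \pmod 4$ it equals $+1$, giving $(-1)^s$: this is exactly the dichotomy between cases (a) and (b). Applying $J_{q_0}^{\sigma_0} \otimes J_{\d q_0 \cdot q}^{\varepsilon \sigma}$ to $\eta(x_0 \oplus x) \cdot \eta(y_0 \oplus y)$ and comparing with $H \circ J_{q_0 \perp q}^{\sigma_0 \oplus \sigma}(j_{q_0\perp q}(x_0 \oplus x, y_0 \oplus y))$ (where the target involution reverses the two factors) then reduces to choosing $\varepsilon = (-1)^{s+1}$ or $(-1)^s$ according to the residue of $n$ mod $4$.

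The main obstacle is not conceptual but bookkeeping: carefully tracking the anti-commutation between $z^{-1}$ and the factors $i_{q_0}(x_0)$ under the antiautomorphism, and confirming that the sign coming from Lemma \ref{lemme-jz} combines correctly with the reversal built into the tensor-product involution to produce the claimed $(-1)^{s+1}\sigma$ or $(-1)^s\sigma$. The reflection cases $\sigma_0 = \tau$ (where $s=1$) are then immediate specializations.
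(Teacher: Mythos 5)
Your proposal is correct and follows essentially the same path as the paper's proof: define $\phi(x_0\oplus x)=i_{q_0}(x_0)\otimes1+z^{-1}\otimes i_{\d q_0\cdot q}(x)$ with $z$ from Lemma \ref{lemme-jz}, extend by the universal property, argue surjectivity plus a dimension count, and match the involutions using $J_{q_0}^{\sigma_0}(z^{-1})=(-1)^s(-1)^{n(n-1)/2}z^{-1}$ to produce the sign $(-1)^{s+1}$ or $(-1)^s$ according to $n\bmod 4$. The only blemish is a stray $-\d q_0\cdot q$ in your first display (it should be $\d q_0\cdot q$, as you in fact write everywhere else), but this is a typo rather than a mathematical slip.
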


\begin{proof}
Let $\{e_1,\cdots,e_r\}$ be an orthogonal basis of ${V_0}^+$ and 
let $\{f_1,\cdots,f_s\}$ be an orthogonal basis of ${V_0}^-$. 
Consider the element
$$z=i_{q_0}(e_1)\cdots i_{q_0}(e_r)i_{q_0}(f_1)\cdots i_{q_0}(f_s)\in
C(V_0\perp V,q_0\perp q).$$
Let $\phi:V_0\perp V_1\rightarrow C(V_0,q_0)\otimes C(V,\d
q_0{\cdot}q)$ be
the map defined by 
\begin{equation}\label{def.phi}\phi(x_0\oplus x)=i_{q_0}(x_0)\otimes 1+z^{-1}\otimes i_{\d
  q_0{\cdot}q}(x).\end{equation} 
We note that $\phi$ is a Clifford map, because:\\

$\begin{array}{rl}
\phi(x_0\oplus x)^2=&(i_{q_0}(x_0)\otimes 1+z^{-1}\otimes 
i_{\d q_0{\cdot}q}(x))^2\\
=&q_0(x_0)\otimes 1+(\d q_0)^{-1}\otimes(\d q_0{\cdot}q(x))\\
&+i_{q_0}(x_0)z^{-1}\otimes i_{\d q_0{\cdot}q}(x)+
z^{-1}i_{q_0}(x_0)\otimes i_{\d q_0{\cdot}q}(x)\\
=&(q_0\perp q)(x_0\oplus x)\cdot(1\otimes 1).
\end{array}$ \\

The map $\phi$ can therefore be extended to a homomorphism 
$${\Phi}:C(V_0\perp V)\rightarrow C(V_0,q_0)\otimes  C(V,\d q_0{\cdot}q).$$
The definition of $\phi$ in (\ref{def.phi}) implies that $\phi$ is surjective. 
As $$\dim_K C(q_0\perp q)=\dim_K(C(V_0,q_0)\otimes C(V,\d q_0\cdot
q)),$$ ${\Phi}$ is injective hence it is an isomorphism. 
It is enough to show that ${\Phi}$ is compatible with the indicated involutions. 

If
$\dim V_0\equiv 2 \mod 4$, using \ref{lemme-jz} we obtain\\

$\begin{array}{ll}
(J_q^{\sigma_0}\otimes J_{\d q_0{\cdot}q}^{(-1)^{s+1}\sigma})\circ{\Phi}
(i_{q_0\perp q}(x_0\oplus x))&=
(J_q^{\sigma_0}\otimes J_{\d
  q_0{\cdot}q}^{(-1)^{s+1}\sigma})(i_{q_0}(x_0)\otimes 1+z^{-1}\otimes i_{\d
  q_0{\cdot}q}(x))\\
&=
i_{q_0}(\sigma_0(x_0))\otimes 1+J_{q_0}^{\sigma_0}(z^{-1})\otimes 
i_{\d q_0{\cdot}q}((-1)^{s+1}\sigma(x))\\
&=i_{q_0}(\sigma_0(x_0))\otimes 1
+z^{-1}\otimes i_{\d q_0{\cdot}q}(\sigma(x)).\end{array}$\\ 

On the other hand, 
${\Phi}\circ J_{q_0\perp q}^{\sigma_0\oplus\sigma}
(i_{q_0\perp q}(x_0\oplus x))= i_{q_0}(\sigma_0(x_0))\otimes 1+
z^{-1}\otimes i_{\d q_0{\cdot}q}(\sigma(x))$. 
We therefore have:
$${\Phi}\circ J_{q_0\perp q}^{\sigma_0\oplus\sigma}=
(J_q^{\sigma_0}\otimes J_{\d
  q_0{\cdot}q}^{(-1)^{s+1}\sigma})\circ{\Phi}.$$
  
Consequently ${\Phi}$
is an isomorphism of algebras with involution. 

The proof for the case where $\dim V_0\equiv 0
\mod 4$ is similar.
\end{proof}

\begin{cor}\label{cor-cliff-inv}
Keeping the same hypotheses as in \ref{iso-cliff-inv} :
\end{cor}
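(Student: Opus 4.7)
The plan is to obtain this corollary as an immediate consequence of Proposition \ref{iso-cliff-inv} by a change of variables, in the same spirit that Corollary \ref{cor-cliff-paire-inv} followed from Proposition \ref{iso-cliff-paire-inv}. In Proposition \ref{iso-cliff-inv} the quadratic form appearing in the second tensor factor on the right hand side is $\d q_0 \cdot q$ rather than $q$ itself, so to get a factor $C(V,q)$ in a tensor decomposition one is naturally led to apply the proposition to $\d q_0 \cdot q$ in place of $q$ and then unwind.

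Concretely, I would first substitute $q \leftarrow \d q_0 \cdot q$ in Proposition \ref{iso-cliff-inv}(a): this gives an isomorphism of algebras with involution
\begin{equation*}
(C(V_0 \perp V, q_0 \perp \d q_0{\cdot}q), J_{q_0 \perp \d q_0{\cdot}q}^{\sigma_0 \oplus \sigma'}) \simeq (C(V_0,q_0) \otimes C(V, \d q_0{\cdot}\d q_0{\cdot}q),\, J_{q_0}^{\sigma_0} \otimes J_{\d q_0{\cdot}\d q_0{\cdot}q}^{(-1)^{s+1}\sigma'})
\end{equation*}
for any orthogonal symmetry $\sigma'$ of $(V, \d q_0{\cdot}q)$. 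Since $(\d q_0)^2$ is a square in $K^*$, the rescaling $x \mapsto (\d q_0)^{-1} x$ is an isometry $(V, (\d q_0)^2 q) \simeq (V, q)$, and because it acts by a scalar it commutes with any linear endomorphism $\sigma'$ of $V$. Hence $\sigma'$ is equally an orthogonal symmetry of $(V,q)$, and there is an induced isomorphism of algebras with involution $(C(V, (\d q_0)^2 q), J^{\sigma'}) \simeq (C(V,q), J^{\sigma'})$. Choosing $\sigma' = (-1)^{s+1}\sigma$ so that $(-1)^{s+1}\sigma' = \sigma$ then converts the right hand side of the displayed isomorphism into $(C(V_0,q_0) \otimes C(V,q), J_{q_0}^{\sigma_0} \otimes J_q^{\sigma})$, which yields the desired formula in case (a). Case (b) is handled in exactly the same way, replacing the exponent $(-1)^{s+1}$ throughout by $(-1)^s$; the reflection specializations $\sigma_0 = \tau$ are then simply read off.

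The one point needing a little care, and the only real obstacle, is the sign bookkeeping: one has to trace which power of $-1$ appears in front of $\sigma$ on each side under the substitution, and verify that the scalar rescaling $C(V, (\d q_0)^2 q) \to C(V, q)$ does intertwine $J^{\sigma'}$ with $J^{\sigma'}$. The latter is automatic because both involutions are determined, via the universal property, by their action on vectors in $V$, and this action is $x \mapsto \sigma'(x)$ in either setting; scaling $V$ by a central element does not alter this.
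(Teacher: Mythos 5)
Your argument is correct and is exactly the substitute-and-rescale deduction the paper leaves implicit (there is no written proof of this corollary; it is presented as an immediate rearrangement of Proposition~\ref{iso-cliff-inv}). One small slip: the scalar map giving the isometry $(V,(\d q_0)^2 q)\to(V,q)$ is $x\mapsto \d q_0\,x$ rather than $x\mapsto (\d q_0)^{-1}x$ (the latter goes the other way), but since either direction yields the required isomorphism of Clifford algebras intertwining $J^{\sigma'}$ with itself, this does not affect the argument.
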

\begin{itemize}
\item[\rm{(a)}] If $\dim V_0\equiv 2 \mod 4$ then we have:
\begin{equation*}
(C(V_0,q_0)\otimes C(V,q), J_{q_0}^{\sigma_0}\otimes 
J_{q}^{\sigma})\simeq 
(C(V_0\perp V,q_0\perp\d q_0{\cdot}q),
J_{q_0\perp\d q_0{\cdot}q}^{\sigma_0\oplus(-1)^{s+1}\sigma}),
\end{equation*}
in particular if $\sigma_0=\tau$ is a reflection then we have:
\begin{equation*}
(C(V_0,q_0)\otimes C(V,q), J_{q_0}^{\tau}\otimes 
J_{q}^{\sigma})\simeq 
(C(V_0\perp V,q_0\perp\d q_0{\cdot}q),
J_{q_0\perp\d q_0{\cdot}q}^{\tau\oplus\sigma}).
\end{equation*}
\item[\rm{(b)}] If  $\dim V_0\equiv 0 \mod 4$ then we have:
\begin{equation*}
(C(V_0,q_0)\otimes C(V,q), J_{q_0}^{\sigma_0}\otimes 
J_{q}^{\sigma})\simeq 
(C(V_0\perp V,q_0\perp\d q_0{\cdot}q),
J_{q_0\perp\d q_0{\cdot}q}^{\sigma_0\oplus(-1)^{s}\sigma}),
\end{equation*}
in particular if $\sigma_0=\tau$ is a reflection then we have:
\begin{equation*}
(C(V_0,q_0)\otimes C(V,q), J_{q_0}^{\tau}\otimes 
J_{q}^{\sigma})\simeq 
(C(V_0\perp V,q_0\perp\d q_0{\cdot}q),
J_{q_0\perp\d q_0{\cdot}q}^{\tau\oplus-\sigma}).
\end{equation*}
\end{itemize}

\begin{remark}\label{remark-cortella}
Anne Cortella has pointed out to me that using the notion of
determinant can lead to a simplification of the statements of \ref{iso-cliff-paire-inv} and 
\ref{iso-cliff-inv}. 
For an orthogonal symmetry
$s$ of a quadratic space $(V,\varphi)$ of dimension $n$, we define 
$\d s=(-1)^{n(n-1)/2}\det(s)$, then one can express the  
isomorphisms \ref{iso-cliff-paire-inv} and
\ref{iso-cliff-inv} in the following way:
$$(C_0(V_1\perp V,q_1\perp q),\jz{q_1\perp q}
{\sigma_1\oplus\sigma})\simeq 
(C_0(V_1,q_1)\otimes C(V,-\d q_1{\cdot}q), \jz{q_1}{\sigma_1}\otimes 
J_{-\d q_1{\cdot}q}^{-\d\sigma_1{\cdot}\sigma}),$$
 $$(C(V_0\perp V,q_0\perp q),J_{q_0\perp q}^{\sigma_0\oplus\sigma})\simeq 
(C(V_0,q_0)\otimes C(V,\d q_0{\cdot}q), J_{q_0}^{\sigma_0}\otimes 
J_{\d q_0{\cdot}q}^{\d\sigma_0{\cdot}\sigma}),$$
because for every orthogonal symmetry $s$ of a 
quadratic space $(V,\varphi)$ we have: $\det(s)=(-1)^m$ where $m$
is the dimension of the subspace of the anti-symmetric
elements of $V$ with respect to $s$.
\end{remark}
 
\section{Tensor products of quaternion algebras with involution }
\begin{prop}\label{qua-cliff-sym}
Let $(Q,J)$ be a quaternion algebra with involution over a field $K$. 
Suppose that $J$ is of the first kind.
Then there exists a nondegenerate quadratic space $(V,q)$ of dimension
$2$ and an orthogonal symmetry $\sigma:V\rightarrow V$ such that 
$(Q,J)\simeq (C(V,q),J_q^\sigma)$. 
More precisely 
\begin{itemize}
\item[(a)] if $J$ is symplectic, one can choose a quadratic space
$(V,q)$ of dimension $2$ such that $(Q,J)\simeq
(C(V,q),J_q^{-\id})$. 
\item[(b)] if $J$ is orthogonal, one can choose a quadratic space
$(V,q)$ of dimension $2$ such that $(Q,J)\simeq
(C(V,q),J_q^{\id})$.
\item[(c)] if $J$ is orthogonal, one can also choose a quadratic space
$(V,q)$ of dimension $2$ and a reflection $\sigma$ of $(V,q)$ such that $(Q,\sigma)\simeq (C(V,q),J_q^\sigma)$.
\end{itemize}
\end{prop}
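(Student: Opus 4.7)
My plan is to realize $Q$ in each case as the Clifford algebra of a suitable $2$-dimensional subspace of $Q$ consisting of pure quaternions, and to apply Proposition \ref{cliff-sym} to identify $J$ with the involution induced by an appropriate orthogonal symmetry.

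For (a), since $J$ is symplectic on the quaternion algebra $Q$, the subspace $\sym(J)$ is $1$-dimensional; hence $\sym(J)=K\cdot 1_Q$, which forces $J$ to coincide with the canonical (conjugation) involution $\gamma$. Any quaternion algebra is isomorphic to $C(V,q)$ for some $2$-dimensional nondegenerate quadratic space (explicitly, if $Q=(a,b)_K$ with generators $i,j$ as in Remark \ref{remark-deg-2}, one takes $V=Ki\oplus Kj$ with $q(\lambda i+\mu j)=\lambda^2 a-\mu^2 b$), and that remark identifies $\gamma$ with $J_q^{-\id}$, so $(Q,J)\simeq(C(V,q),J_q^{-\id})$.

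For (b), any orthogonal first-kind involution of $Q$ can be written $J=\intt(u)\circ\gamma$ for an invertible pure quaternion $u\in Q^-$. I would define $V:=\{v\in Q^-:vu+uv=0\}$, that is, the $q$-orthogonal complement of $Ku$ inside the $3$-dimensional pure quaternion space $(Q^-,q)$, where $q(x)=x^2=-\nrd(x)$. Since $u$ is anisotropic and $(Q^-,q)$ is nondegenerate, $V$ is a $2$-dimensional nondegenerate quadratic subspace. A short computation using $J(v)=-u^{-1}vu$ for pure $v$ identifies $V$ with $\sym(J)\cap Q^-$, so $J$ fixes $V$ pointwise. Choosing an orthogonal basis $\{e_1,e_2\}$ of $V$, the $e_i$ are pure and anti-commute in $Q$, so the inclusion $V\hookrightarrow Q$ is a Clifford map, which extends by the universal property to an algebra homomorphism $C(V,q|_V)\to Q$ whose image contains $\{1,e_1,e_2,e_1 e_2\}$; hence it equals $Q$, and by dimension count the map is an isomorphism. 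The uniqueness in Proposition \ref{cliff-sym} then identifies $J$ with $J_{q|_V}^{\id}$.

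For (c), I would modify the presentation from (b). Pick an anisotropic $e_1\in V$ (which exists because $q|_V$ is nondegenerate) and extend to an orthogonal basis $\{e_1,e_2\}$ of $V$; set $f_1:=e_1$ and $f_2:=e_1 e_2\in Q$. A direct calculation gives $f_2\in Q^-$, $f_2^2=-q(e_1)q(e_2)\in K$, and $f_1 f_2+f_2 f_1=0$. Thus $V':=Kf_1\oplus Kf_2$ with $q'(f_1)=q(e_1)$ and $q'(f_2)=-q(e_1)q(e_2)$ is a $2$-dimensional nondegenerate quadratic space, and the same universal-property argument yields $C(V',q')\cong Q$. Now $J(f_1)=f_1$ while $J(f_2)=J(e_2)J(e_1)=e_2 e_1=-e_1 e_2=-f_2$, so $\sigma:=J|_{V'}$ is a reflection of $(V',q')$, and Proposition \ref{cliff-sym} then gives $J=J_{q'}^{\sigma}$.

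The main obstacle is the verification in (b) that $V$ is $2$-dimensional with $q|_V$ nondegenerate. It rests on the existence of the presentation $J=\intt(u)\circ\gamma$ with invertible pure $u$ (a standard fact for first-kind involutions of a quaternion algebra), after which the claim reduces to recognising $V$ as the orthogonal complement of the anisotropic line $Ku$ in the nondegenerate $3$-dimensional quadratic space $(Q^-,q)$.
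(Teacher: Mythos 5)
Your argument is correct, and it proves the same isomorphisms as the paper by the same underlying mechanism (realize $Q$ as $C(V,q)$ for a $2$-dimensional subspace of pure quaternions and recognize $J|_V$ as the orthogonal symmetry), but the construction for (b) and (c) is packaged differently. For (a) the two proofs coincide. For (b)--(c) the paper starts from an invertible skew element $u$ (so $u^2\in K$), invokes Skolem--Noether to produce $v$ with $uv=-vu$, checks $J(v)=v$ via the identity $xu^{-1}J(x)u\in K$, and then uses the bases $\{u,v\}$ for the reflection case and $\{uv,v\}$ for the identity case. You instead identify the relevant plane at once as $V=\sym(J)\cap Q^{-}=(Ku)^{\perp}$ in the pure quaternion space $(Q^{-},q)$ with $q(x)=x^2$, getting dimension and nondegeneracy from standard facts about the pure norm form, and you pass to the reflection case (c) by replacing the orthogonal basis $\{e_1,e_2\}$ of $V$ by $\{e_1, e_1e_2\}$; this reverses the paper's order (reflection first, then identity). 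The approaches converge on the same $2$-plane $\sym(J)\cap Q^{-}$ for (b) and the same $J$-invariant decomposition for (c), and both close the argument with the uniqueness clause built into Proposition \ref{cliff-sym}. What your version buys is that it avoids the explicit Skolem--Noether step and makes the eigenspace structure of $J$ on $Q^{-}$ the organizing principle; what the paper's version buys is that it produces explicit generators $u,v$ with $u^2,v^2\in K$ and hence an explicit symbol presentation of $Q$ alongside the isomorphism.
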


\begin{proof}
Suppose that $Q=(a,b)_K$ is generated by the elements $i$ and $j$ with
$i^2=a\in K^*$, $j^2=b\in K^*$ and $i j=-j i$. 

First consider the case where $J$ is symplectic.
We have $J(i)=-i$ and $J(j)=-j$.
Consider the vector space $V=K i\oplus K j$ and the nondegenerate
quadratic form $q:V\rightarrow K$ defined by  
$q(\lambda_1 i+\lambda_2 j)=\lambda_1^2 a+\lambda_2^2 b$
for all $\lambda_1,\ \lambda_2\in K$.
We have $(Q,J)\simeq (C(V,q),J_q^{-\id})$.

Now suppose that $J$ is orthogonal. Let $u$ be an anti-symmetric
invertible element of $Q$ with respect to $J$.
The involution $J'=\intt(u)\circ J$ is of symplectic type.
According to \cite[Ch. 8, 10.1]{scharlau}, we have:
\begin{equation}\label{xu-1}
 x u^{-1} J(x) u\in K \text{ for every }x\in Q
\end{equation}
By putting $x=u$ in (\ref{xu-1}) we obtain $u^2\in K$. 
Consider the quadratic extension $K(u)/K$. 
The restriction $J|_{K(u)}$ is the nontrivial automorphism 
of $K(u)/K$. 
According to Skolem-Noether'{\bf s} Theorem, there exists an 
invertible element $v\in Q$ such that $u v=- v u$. 
As $v^2$ commutes
with both $u$ and $v$, it is in the center, i.e., $v^2\in K$. 

By putting $x=v$ in (\ref{xu-1}) we obtain $v J(v)\in K$. 
Thus there exists $\alpha\in K$ such that $J(v)=\alpha v$. 
We have $v=J^2(v)=\alpha^2 v$. 
Thus $\alpha=1$ or  $\alpha=-1$. 
The case
$\alpha=-1$ is excluded, because $J$ is orthogonal
(the dimension of the subspace of anti-symmetric elements of
$Q$ is $1$). 
Thus we have $J(v)=v$. The 
elements $u$ and $v$ satisfy: $u^2=a'\in K$,  $v^2=b'\in K$,
$J(u)=-u$, $J(v)=v$ and $u v+ v u=0$. 
Consider the vector space  $V=K u\oplus K v$ and
 $q:V\rightarrow K$ defined by 
$q(\lambda_1 u+\lambda_2 v)=\lambda_1^2 a'+\lambda_2^2b'$
 for
every $\lambda_1,\lambda_2\in K$ and the orthogonal symmetry 
$\sigma:V\rightarrow V$ defined by $\sigma(u)=-u$ and
$\sigma(v)=v$. 
Consider the   
$K$-linear map $f:V\rightarrow Q$, defined by $f(u)=-u$,
$f(v)=v$. 
The map $f$ is a Clifford map and can be extended 
to an isomorphism between $C(V,q)$ and $Q$. 
The construction of $f$ implies that it is compatible with $J_q^\sigma$ and $J$.
It is therefore an isomorphism of algebras with involution. 
In this case $\sigma$ is a reflection of $(V,q)$ because the dimension of the vector space of anti-symmetric elements of 
$V$ with respect to $\sigma$ is $1$.

Let $w=uv$, we have $w^2=-a'b'$ and $J(w)=w$.
Consider the vector space 
$V=K w\oplus K v$, the quadratic form $q:V\rightarrow K$ defined by 
$q(\lambda_1 w+\lambda_2 v)=-\lambda_1^2 a'b'+\lambda_2^2b'$ for
all $\lambda_1,\lambda_2\in K$ and the orthogonal symmetry
$\sigma:V\rightarrow V$ defined by $\sigma(w)=w$ and 
$\sigma(v)=v$.
Consider the 
$K$-linear map $f:V\rightarrow Q$ defined by $f=\id|_V$.
As $f$ is a Clifford map, it can be extended to an isomorphism 
between $C(V,q)$ and $Q$. The construction of
$f$ shows that $f$ is compatible with $J_q^\sigma$ and $J$.
It is therefore an isomorphism of algebras with involution. 
In this case, $\sigma$ is the identity map.
\end{proof}

As a direct consequence of the proof of Proposition
\ref{qua-cliff-sym}, we obtain

\begin{cor}
Let $a$ and $b$ be two invertible elements of a field $K$. 
Then we have an isomorphism 
$$(C(\langle
a,b\rangle),J^{-+})\simeq(C(\langle-ab,b\rangle),J^{++}),$$
or equivalently $$(C(\langle
a,b\rangle),J^{++})\simeq(C(\langle-ab,b\rangle),J^{-+}),$$
here $J^{-+}$ is the involution of $C(\langle a,b\rangle)$ induced by
the reflection $\tau=-\id_{K\cdot x}\oplus \id_{K\cdot y}$, where
$V=K\cdot x\perp K\cdot y$ is the underlying vector space of $q=\langle
a, b\rangle$ with $q(x)=a$, $q(y)=b$ and $J^{++}$ is the involution of
$q'=\langle -ab,b\rangle$ induced by the identity map on the
underlying vector space of $q'$.    
\end{cor}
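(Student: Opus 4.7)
The plan is to apply parts (b) and (c) of Proposition \ref{qua-cliff-sym} to one and the same quaternion algebra equipped with one and the same orthogonal involution, and then to read off the two Clifford-theoretic presentations that emerge.

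Concretely, take $Q=(a,b)_K$ with generators $i,j$ satisfying $i^2=a$, $j^2=b$, $ij=-ji$, and equip $Q$ with the orthogonal involution $J$ defined by $J(i)=-i$ and $J(j)=j$ (the element $i$ is invertible and anti-symmetric, confirming that $J$ is orthogonal). The construction used in the part of the proof of Proposition \ref{qua-cliff-sym} that establishes (c), fed with the pair $(u,v)=(i,j)$, yields directly the presentation $(Q,J)\simeq(C(\langle a,b\rangle),J^{-+})$, where the reflection is $-\id_{K\cdot i}\oplus\id_{K\cdot j}$. For the second presentation, set $w=ij$; then $w^2=-ab$, and $J(w)=J(j)J(i)=j(-i)=ij=w$, so $w$ is $J$-symmetric and anti-commutes with $v=j$. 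Feeding $(w,j)$ into the construction used in the proof of (b) gives $(Q,J)\simeq(C(\langle -ab,b\rangle),J^{++})$. Combining the two presentations produces the first asserted isomorphism of the corollary.

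The equivalent formulation is obtained by applying the first isomorphism with the pair $(a,b)$ replaced by $(-ab,b)$, giving $(C(\langle -ab,b\rangle),J^{-+})\simeq(C(\langle ab^2,b\rangle),J^{++})$. Since $b^2$ is a square, $\langle ab^2\rangle\simeq\langle a\rangle$, so the orthogonal sum $\langle ab^2,b\rangle$ is isometric to $\langle a,b\rangle$ via an isometry that only rescales the first basis vector and therefore commutes with the identity map of the underlying vector space; the induced isomorphism of Clifford algebras intertwines the $J^{++}$'s, yielding $(C(\langle -ab,b\rangle),J^{-+})\simeq(C(\langle a,b\rangle),J^{++})$.

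The only potential obstacle is one of bookkeeping: one must verify that the \emph{same} orthogonal involution $J$ on $Q$ satisfies the hypotheses of both constructions. This is automatic, since whenever $u,v$ meet the conditions $J(u)=-u$, $J(v)=v$, $uv=-vu$ of the construction in (c), the element $w=uv$ automatically satisfies $J(w)=w$ and $wv=-vw$, fitting exactly the setup of (b). No computation beyond those already performed in the proof of Proposition \ref{qua-cliff-sym} is required.
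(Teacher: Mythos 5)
Your proof is correct and matches the paper's intended argument exactly: the paper invokes the corollary as ``a direct consequence of the proof of Proposition \ref{qua-cliff-sym},'' and you carry out precisely that, applying the constructions in parts (b) and (c) of that proof to the single algebra $Q=(a,b)_K$ with the orthogonal involution $J(i)=-i$, $J(j)=j$, using $(u,v)=(i,j)$ for the reflection presentation and $(w,v)=(ij,j)$ for the identity presentation. Your derivation of the ``equivalent'' formulation by substituting $(a,b)\mapsto(-ab,b)$ and absorbing the square $b^2$ is also a valid (and slightly more explicit) justification of the equivalence.
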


\begin{thm}\label{mul-qua-cliff-sym}
Let $(Q_1,J_1),\ \cdots,(Q_n,J_n)$ be quaternion algebras over a
field $K$ with involutions of the first kind.
Then there exists a quadratic space $(V,q)$ over $K$ of dimension $2n$ and 
an orthogonal symmetry $\sigma:V\rightarrow V$ such that  
$$(C(V,q),J_q^\sigma)\simeq (Q_1,J_1)\otimes\cdots\otimes(Q_n,J_n).$$
\end{thm}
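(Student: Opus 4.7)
The plan is to argue by induction on $n$, using Proposition \ref{qua-cliff-sym} as the base case and Corollary \ref{cor-cliff-inv}(a) as the inductive step.

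For $n=1$ the statement is exactly Proposition \ref{qua-cliff-sym}, which provides a two-dimensional quadratic space $(V_0,q_0)$ and an orthogonal symmetry $\sigma_0$ with $(Q_1,J_1)\simeq (C(V_0,q_0),J_{q_0}^{\sigma_0})$. Assume the theorem holds for $n-1$ factors. Applying the induction hypothesis to $(Q_2,J_2),\ldots,(Q_n,J_n)$ yields a quadratic space $(V',q')$ of dimension $2(n-1)$ and an orthogonal symmetry $\sigma'$ of $(V',q')$ such that
\begin{equation*}
(Q_2,J_2)\otimes\cdots\otimes(Q_n,J_n)\simeq (C(V',q'),J_{q'}^{\sigma'}).
\end{equation*}
Tensoring this with the realization of $(Q_1,J_1)$ as a two-dimensional Clifford algebra gives
\begin{equation*}
(Q_1,J_1)\otimes\cdots\otimes(Q_n,J_n)\simeq (C(V_0,q_0),J_{q_0}^{\sigma_0})\otimes (C(V',q'),J_{q'}^{\sigma'}).
\end{equation*}

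Now the crucial observation is that $\dim V_0=2\equiv 2\pmod 4$, so Corollary \ref{cor-cliff-inv}(a) applies directly and produces an isomorphism of algebras with involution
\begin{equation*}
(C(V_0,q_0)\otimes C(V',q'),J_{q_0}^{\sigma_0}\otimes J_{q'}^{\sigma'})\simeq (C(V_0\perp V',q_0\perp\d q_0\cdot q'),J_{q_0\perp\d q_0\cdot q'}^{\sigma_0\oplus(-1)^{s+1}\sigma'}),
\end{equation*}
where $s=\dim V_0^-$. The quadratic space $V_0\perp V'$ has dimension $2+2(n-1)=2n$, and $\sigma_0\oplus(-1)^{s+1}\sigma'$ is again an orthogonal symmetry since it is the orthogonal sum of two such maps. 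This completes the induction and produces the desired $(V,q)$ and $\sigma$.

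I do not expect any serious obstacle: all the machinery has been set up in Sections \ref{section-orsym} and \ref{section-decomposition}, and the dimensions line up exactly right (two-dimensional pieces combined via the $\dim V_0\equiv 2\pmod 4$ case of the decomposition formula). The only mild point to verify is that the orthogonal symmetry structure is preserved at each step of the induction, which is immediate from the fact that a direct sum of orthogonal symmetries is an orthogonal symmetry with respect to the orthogonal sum form.
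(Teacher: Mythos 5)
Your proof is correct and follows essentially the same route as the paper: induction on $n$, with Proposition \ref{qua-cliff-sym} for the base case and the decomposition result from Section \ref{section-decomposition} for the inductive step. The only cosmetic difference is that the paper invokes Proposition \ref{iso-cliff-inv} directly (taking $h=q_0\perp(\d q_0)^{-1}\cdot q'$) while you invoke Corollary \ref{cor-cliff-inv}(a), which gives the form $q_0\perp\d q_0\cdot q'$; these are isometric since they differ by the square $(\d q_0)^2$, so the two are the same argument.
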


\begin{proof}
We prove the result by induction on $n$. If  
$n=1$, we use directly Proposition \ref{qua-cliff-sym}. 

Assume that $n>1$. 
By induction hypothesis,
there exists a quadratic space $(W,q)$  of dimension $2(n-1)$
over $K$ and an orthogonal symmetry $\sigma:W\rightarrow W$ such that 
$$(C(W,q),J_{q}^{\sigma})\simeq
(Q_2,J_2)\otimes\cdots\otimes(Q_{n-1},J_{n-1}).$$

According to Proposition \ref{qua-cliff-sym}, there also exists a quadratic space
$(W_0,q_0)$ of dimension $2$ over $K$ and an orthogonal symmetry 
$\sigma_0:W_0\rightarrow W_0$ such that 
$$(C(W_0,q_0),J_{q_0}^{\sigma_0})\simeq (Q_1,J_1).$$

Let $s$ be the dimension of the anti-symmetric elements of  
$(W_0,q_0)$. We have obviously $s=2$ if $J_1$ is symplectic. 
Using 
\ref{iso-cliff-inv}, we obtain
\begin{equation*}
\begin{array}{lll}
(Q_1,J_1)\otimes\cdots\otimes(Q_n,J_n)&\simeq& 
(C(W_0,q_0),J_{q_0}^{\sigma_0})\otimes(C(W,q),J_{q}^{\sigma})\\
&\simeq& (C(W_0\perp
W,h),
J_{h}^{\tau}),
\end{array}
\end{equation*}

where $h=q_0\perp(\d q_0)^{-1}\cdot q$ and $\tau=\sigma_0\oplus(-1)^{-(s+1)}\sigma$.
\end{proof}

According to a result due to Albert
(cf. \cite[16.1]{boi}),
 every central simple algebra with involution $A$
of degree $4$ can be decomposed as a tensor product of two
quaternion algebras.
In \cite{kps}, it has been shown that a central simple algebra $A$
of degree $4$ with an orthogonal involution $\sigma$ can be decomposed as a tensor product
of two quaternion algebras with symplectic involution if and only if
it can be decomposed as a tensor product of two quaternion algebras
with orthogonal involution if and only if the discriminant of $\sigma$
is trivial. 
See also \cite[15.12]{boi}.
Here we complement these results by showing that:  

\begin{lem}\rm{(\cite{kps})}\label{biquaternions-orthogonal}
Let $(A,\sigma)$ be a central simple algebra of degree $4$ over a
field $K$. 
The following assertions are equivalent: 
\begin{itemize}
\item[(i)] $(A,\sigma)$ is isomorphic to the tensor product of two
  quaternion algebras with ortho\-gonal involutions.
\item[(ii)] $(A,\sigma)$ is isomorphic to the tensor product of two
  quaternion algebras with symplectic involutions.
\item[(iii)] there exists a quadratic space $(V,q)$ of dimension $4$
  over $K$ and a reflection $\tau$ of $(V,q)$ such that $(A,\sigma)\simeq(C(V,q),J_{q}^{\tau}).$
\end{itemize}
\end{lem}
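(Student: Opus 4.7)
The plan is to take the equivalence (i) $\Leftrightarrow$ (ii) as already established in \cite{kps} (as recalled in the paragraph preceding the statement) and reduce the lemma to proving (i) $\Leftrightarrow$ (iii). Both directions of this remaining equivalence will be obtained by combining Proposition \ref{qua-cliff-sym}, which presents a quaternion algebra with a first kind involution as a Clifford algebra of a 2-dimensional quadratic space, with Proposition \ref{iso-cliff-inv} and its Corollary \ref{cor-cliff-inv}, which bridge the tensor-product picture and the Clifford-of-orthogonal-sum picture while tracking involutions.

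For (i) $\Rightarrow$ (iii), I would write $(A,\sigma) \simeq (Q_1,J_1) \otimes (Q_2,J_2)$ with both $J_i$ orthogonal. I would then present the first factor through Proposition \ref{qua-cliff-sym}(c), writing $(Q_1,J_1) \simeq (C(V_1,q_1), J_{q_1}^{\tau_1})$ with $\tau_1$ a reflection of the 2-dimensional space $V_1$ (so $\dim V_1^- = 1$), and present the second factor through Proposition \ref{qua-cliff-sym}(b), as $(Q_2,J_2) \simeq (C(V_2,q_2), J_{q_2}^{\id})$. Applying Corollary \ref{cor-cliff-inv}(a) with $V_0=V_1$, $\sigma_0=\tau_1$, $s=1$, and $\sigma=\id_{V_2}$ gives
$$(A,\sigma) \simeq \bigl(C(V_1\perp V_2,\ q_1\perp \d q_1\cdot q_2),\ J^{\tau_1\oplus\id}\bigr).$$
The anti-symmetric subspace of $\tau_1\oplus\id$ is exactly $V_1^-$ and is therefore $1$-dimensional, so $\tau_1\oplus\id$ is a reflection on this 4-dimensional space, yielding (iii).

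For (iii) $\Rightarrow$ (i), suppose $(A,\sigma) \simeq (C(V,q), J_q^{\tau})$ with $(V,q)$ nondegenerate of dimension 4 and $\tau$ a reflection. By Proposition \ref{symetrie-car} we have $V = V^+ \perp V^-$ with $\dim V^-=1$; a generator $v$ of $V^-$ must be anisotropic, for otherwise $v$ would lie in $v^{\perp}=V^+$, contradicting $V^+\cap V^-=0$. Since $V^+$ is a nondegenerate $3$-dimensional space, it contains an anisotropic vector $w$. Setting $V_1 := Kv\perp Kw$ and $V_2 := V_1^{\perp}$ gives an orthogonal decomposition $V = V_1\perp V_2$ into nondegenerate $2$-dimensional subspaces compatible with $\tau$, in the sense that $\tau = \tau|_{V_1}\oplus\id|_{V_2}$ with $\tau|_{V_1}$ a reflection of $V_1$. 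Proposition \ref{iso-cliff-inv}(a) (with $V_0=V_1$, $\sigma_0=\tau|_{V_1}$, $s=1$, $\sigma=\id$) then yields
$$(A,\sigma) \simeq \bigl(C(V_1,q|_{V_1}),\ J^{\tau|_{V_1}}\bigr)\otimes\bigl(C(V_2,\ \d q|_{V_1}\cdot q|_{V_2}),\ J^{\id}\bigr),$$
and Proposition \ref{qua-cliff-sym}(c) and (b) identify the two factors as quaternion algebras equipped with orthogonal involutions, which is (i).

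The main obstacle is organizational rather than technical: one must identify, among the presentations offered by Proposition \ref{qua-cliff-sym}, a choice that produces a genuine \emph{reflection} on the $4$-dimensional space and not merely an arbitrary orthogonal symmetry. Mixing Proposition \ref{qua-cliff-sym}(c) for one factor with (b) for the other is precisely what achieves a $1$-dimensional anti-symmetric part; using (b) for both factors, (a) for both, or (c) for both would yield an anti-symmetric subspace of dimension $2$, and the argument would fail to deliver the reflection required by (iii).
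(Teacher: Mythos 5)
Your proposal is correct and follows essentially the same route as the paper: present each quaternion factor as a Clifford algebra via Proposition \ref{qua-cliff-sym} (one with the reflection presentation, one with the identity presentation) and pass between the tensor-product and orthogonal-sum pictures via \ref{iso-cliff-inv}/\ref{cor-cliff-inv}. The paper uses (b) for the first factor and (c) for the second, producing $\id\oplus-\rho$, while you use (c) and (b) in the opposite order, producing $\tau_1\oplus\id$ — a cosmetic difference; and the paper proves (i)$\Leftrightarrow$(ii) directly via the same Clifford machinery rather than citing \cite{kps}, while it leaves (iii)$\Rightarrow$(i) as a one-line reference, which you helpfully spell out.
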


\begin{proof}
For the sake of completeness, we also prove the equivalence of (i) and
(ii) using  \ref{iso-cliff-inv}.

(i)$\Rightarrow$(ii). Let $(A,\tau)=(Q_1,\tau_1)\otimes(Q_2,\tau_2)$ where 
$(Q_1,\tau_1)$ and $(Q_2,\tau_2)$ are quaternion algebras 
and $\tau_1$ and $\tau_2$ are orthogonal. 
According to \ref{qua-cliff-sym},
there exist quadratic spaces $(V_1,q_1)$ and
$(V_2,q_2)$ of dimension $2$ such that:
 $$(Q_1,\tau_1)\simeq (C(V_1,q_1),J_{q_1}^{id}),$$ 
$$(Q_2,\tau_2)\simeq (C(V_2,q_2),J_{q_2}^{\id}).$$
 
We thus obtain
\begin{equation*}
\begin{array}{lll}
(Q_1,\tau_1)\otimes(Q_2,\tau_2)&\simeq &
(C(V_1,q_1),J_{q_1}^{\id})\otimes(C(V_2,q_2),J_{q_2}^{\id})\\
\text{using \ref{cor-cliff-inv}}&\simeq&(C(V_1\perp V_2,q_1\perp(\d q_1)\cdot q_2),J_{q_1\perp(\d
  q_1)\cdot q_2}^{\id\oplus-\id})\\
\text{using \ref{iso-cliff-inv}}&\simeq&(C(V_1,\d q_2\cdot q_1),J_{\d
  q_2\cdot q_1}^{-\id})\otimes(C(V_2,\d q_1\cdot q_2),J_{\d q_1\cdot q_2}^{-\id})
\end{array}
\end{equation*}
The involutions $J_{\d q_2\cdot q_1}^{-\id}$ and $J_{\d q_1\cdot q_2}^{-\id}$ are
both symplectic, the proof is therefore achieved.

The proof of (ii)$\Rightarrow$(i) is similar.

(i)$\Rightarrow$(iii). According to \ref{qua-cliff-sym}, there exist
quadratic spaces $(V_1,q_1)$ and $(V_2,q_2)$ and a reflection $\rho$
of $(V_2,q_2)$ such that $$(Q_1,\tau_1)\simeq
(C(V_1,q_1),J_{q_1}^{id}),$$
$$(Q_2,\tau_2)\simeq (C(V_2,q_2),J_{q_2}^{\rho}).$$

We therefore obtain
\begin{equation*}
\begin{array}{lll}
(Q_1,\tau_1)\otimes(Q_2,\tau_2)&\simeq &
(C(V_1,q_1),J_{q_1}^{\id})\otimes(C(V_2,q_2),J_{q_2}^{\rho})\\
\text{using \ref{cor-cliff-inv}}&\simeq&(C(V_1\perp V_2,q_1\perp(\d q_1)\cdot q_2),J_{q_1\perp(\d
  q_1)\cdot q_2}^{\id\oplus-\rho})
\end{array}
\end{equation*}
It suffices to put $V=V_1\perp V_2$, $q=q_1\perp(\d q_1)\cdot q_2$ and
$\tau=\id\oplus-\rho$, we then have $(A,\sigma)\simeq(C(V,q),J_{q}^{\tau}).$

The implication (i)$\Rightarrow$(iii) follows from \ref{qua-cliff-sym} and \ref{iso-cliff-inv}.
\end{proof}

In \cite{rowen}, it has been shown that every division algebra
of degree $4$ with symplectic involution can be decomposed as a tensor
product of two quaternion algebras with involution. 
More generally if $A$ is a central simple algebra with a symplectic
involution can be decomposed as a tensor product of two quaternion
algebras with involution, see \cite[Thm. 10.5, Prop. 10.21]{shapiro}.
We complement these results by showing that:

\begin{lem}\label{biquaternions-symplectic} {\rm (Compare with \cite[Lemma 10.6]{shapiro})}
Let $(A,\sigma)$ be a central simple algebra of degree $4$ over a
field $K$. 
The following assertions are equivalent: 
\begin{itemize}
\item[(i)] $(A,\sigma)$ is isomorphic to
  $(Q_1,\sigma_1)\otimes(Q_2,\sigma_2)$, where $Q_1$ and $Q_2$ are
  quaternion algebras over $K$, $\sigma_1$ is symplectic and
  $\sigma_2$ is orthogonal.
\item[(ii)] there exists a quadratic space $(V,q)$ of dimension $4$
  over $K$ such that $(A,\sigma)\simeq(C(V,q),J_{q}^{\id}).$
\item[(iii)] there exists a quadratic space $(V,q)$ of dimension $4$
  over $K$ such that $(A,\sigma)\simeq(C(V,q),J_{q}^{-\id}).$
\end{itemize}
\end{lem}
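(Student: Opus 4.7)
The plan is to imitate the proof of Lemma \ref{biquaternions-orthogonal}, combining Proposition \ref{qua-cliff-sym} with Proposition \ref{iso-cliff-inv}(a) (equivalently Corollary \ref{cor-cliff-inv}(a)), which is applicable because a $2$-dimensional block satisfies $\dim V_0\equiv 2\pmod 4$. It suffices to prove (i)$\Leftrightarrow$(ii) and (i)$\Leftrightarrow$(iii); the equivalence (ii)$\Leftrightarrow$(iii) is then automatic.

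For (i)$\Rightarrow$(ii) and (i)$\Rightarrow$(iii), I start from a decomposition $(A,\sigma)\simeq(Q_1,\sigma_1)\otimes(Q_2,\sigma_2)$ with $\sigma_1$ symplectic and $\sigma_2$ orthogonal. Parts (a) and (b) of Proposition \ref{qua-cliff-sym} yield $(Q_1,\sigma_1)\simeq(C(V_1,q_1),J_{q_1}^{-\id})$ and $(Q_2,\sigma_2)\simeq(C(V_2,q_2),J_{q_2}^{\id})$ with $\dim V_i=2$, and Corollary \ref{cor-cliff-inv}(a) can then be applied in two different orderings. Taking $(V_0,q_0)=(V_2,q_2)$ with $\sigma_0=\id$ (so $s=0$) gives the induced involution $\id\oplus(-1)^{s+1}(-\id)=\id$ on $C(V_2\perp V_1,q_2\perp\d q_2\cdot q_1)$, which is (ii); taking instead $(V_0,q_0)=(V_1,q_1)$ with $\sigma_0=-\id$ (so $s=2$) gives $-\id\oplus(-1)^{s+1}\id=-\id$ on $C(V_1\perp V_2,q_1\perp\d q_1\cdot q_2)$, which is (iii).

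Conversely, suppose $(A,\sigma)\simeq(C(V,q),J_q^{\epsilon\id})$ with $\epsilon=\pm 1$ and $\dim V=4$. Since $q$ is nondegenerate, any orthogonal diagonalization lets me split $V=V_0\perp V'$ with both summands nondegenerate of dimension $2$; under this splitting, $\epsilon\id=\epsilon\id|_{V_0}\oplus\epsilon\id|_{V'}$ with $s=\dim V_0^-$ equal to $0$ when $\epsilon=+1$ and to $2$ when $\epsilon=-1$. Proposition \ref{iso-cliff-inv}(a), read in the tensor-product direction, then yields
\[(A,\sigma)\simeq(C(V_0,q_0),J_{q_0}^{\epsilon\id})\otimes(C(V',\d q_0\cdot q'),J_{\d q_0\cdot q'}^{(-1)^{s+1}\epsilon\id}).\]
In either case $(-1)^{s+1}\epsilon=-\epsilon$, so the two quaternion factors carry involutions $J^{\epsilon\id}$ and $J^{-\epsilon\id}$; by Remark \ref{remark-deg-2} these are of opposite type, producing the required mixed-type decomposition and proving (i).

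The only real delicacy is the bookkeeping of the sign $(-1)^{s+1}$, which must always flip the sign on the second factor so that the two quaternion pieces end up of opposite type; a miscalculation here would collapse the statement into the situation of Lemma \ref{biquaternions-orthogonal}. Apart from that, everything is a direct translation of the decomposition isomorphisms established in Section \ref{section-decomposition}.
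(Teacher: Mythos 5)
Your proof is correct and follows essentially the same route as the paper: the forward implications (i)$\Rightarrow$(ii) and (i)$\Rightarrow$(iii) are obtained exactly as in the paper by applying Proposition~\ref{qua-cliff-sym} together with Corollary~\ref{cor-cliff-inv}(a) in the two possible orderings, with the same sign computations $\id\oplus(-1)^{s+1}(-\id)=\id$ (when $s=0$) and $-\id\oplus(-1)^{s+1}\id=-\id$ (when $s=2$). For the converse, the paper merely states that (ii)$\Rightarrow$(i) and (iii)$\Rightarrow$(i) ``follow from~\ref{iso-cliff-inv}''; you spell this out by picking an orthogonal $2+2$ splitting of $V$ and observing that $(-1)^{s+1}\epsilon=-\epsilon$ always (since $s\in\{0,2\}$), so the two quaternion factors acquire involutions $J^{\epsilon\id}$ and $J^{-\epsilon\id}$ which are of opposite type by Remark~\ref{remark-deg-2} (or Corollary~\ref{lewis-type}); this is exactly what the paper has in mind, just made explicit.
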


\begin{proof}
According to \ref{qua-cliff-sym}, there exists quadratic spaces
$(V_1,q_1)$ and $(V_2,q_2)$ such that
$(Q_1,\sigma_1)\simeq(C(V_1,q_1),J_{q_1}^{-\id})$ and
$(Q_2,\sigma_2)\simeq (C(V_2,q_2),J_{q_2}^{\id})$.
We thus obtain
\begin{equation*}
\begin{array}{lll}
(Q_1,\sigma_1)\otimes(Q_2,\sigma_2)&\simeq &
(C(V_1,q_1),J_{q_1}^{-\id})\otimes(C(V_2,q_2),J_{q_2}^{\id})\\
\text{using \ref{cor-cliff-inv}}&\simeq&(C(V_1\perp V_2,q_1\perp(\d q_1)\cdot q_2),J_{q_1\perp(\d
  q_1)\cdot q_2}^{-\id\oplus-\id}).
\end{array}
\end{equation*}
Similarly we have:
\begin{equation*}
\begin{array}{lll}
(Q_1,\sigma_1)\otimes(Q_2,\sigma_2)&\simeq &
(C(V_1,q_1),J_{q_1}^{-\id})\otimes(C(V_2,q_2),J_{q_2}^{\id})\\
\text{using \ref{cor-cliff-inv}}&\simeq&(C(V_1\perp V_2,(\d q_2)\cdot q_1\perp q_2),J_{(\d q_2)\cdot q_1\perp q_2}^{\id\oplus\id}).
\end{array}
\end{equation*}
We thus have the implications (i)$\Rightarrow$(ii) and
(i)$\Rightarrow$(iii). 
The implications (ii)$\Rightarrow$(i) and (iii)$\Rightarrow$(i) follow
from \ref{iso-cliff-inv}.
\end{proof}

\begin{lem}\label{3-quaternions}
Let $(Q_1,\sigma_1)$, $(Q_2,\sigma_2)$ and $(Q_3,\sigma_3)$ be
quaternion algebras with involutions of the first kind. 
Let $(A,\sigma)=(Q_1,\sigma_1)\otimes(Q_2,\sigma_2)\otimes(Q_3,\sigma_3)$.
\begin{itemize}
\item[(a)] if $\sigma$ is symplectic then there exists a quadratic
  space $(V,q)$ of dimension $6$ over $K$ such that
  $(A,\sigma)\simeq(C(V,q),J_q^{\id})$.
\item[(b)] if $\sigma$ is orthogonal then there exists a quadratic
  space $(A,\sigma)$ of dimension $6$ such that $(A,\sigma)\simeq(C(V,q),J_q^{-\id})$.
\end{itemize} 
\end{lem}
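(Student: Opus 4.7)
My plan is to split by type, reduce using the biquaternion lemmas, and finish with the sign computation in Corollary \ref{cor-cliff-inv}. Because symplectic$\otimes$orthogonal is symplectic while like-type tensor products are orthogonal for first-kind quaternion involutions, $\sigma=\sigma_1\otimes\sigma_2\otimes\sigma_3$ is symplectic exactly when an odd number of the $\sigma_i$ are symplectic. Thus case (a) splits into (a1) all three $\sigma_i$ symplectic and (a2) exactly one symplectic and two orthogonal, and case (b) splits into (b1) two symplectic and one orthogonal, and (b2) all three orthogonal.

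I would first reduce (a1) to (a2) and (b2) to (b1) using Lemma \ref{biquaternions-orthogonal}. In (a1), the subfactor $(Q_2,\sigma_2)\otimes(Q_3,\sigma_3)$ is of orthogonal type, so by (ii)$\Leftrightarrow$(i) of \ref{biquaternions-orthogonal} it admits a decomposition as a tensor product of two quaternion algebras carrying orthogonal involutions; substituting this decomposition in produces a new description of the same $(A,\sigma)$ as one symplectic and two orthogonal quaternion involutions, which is case (a2). The reduction of (b2) to (b1) is entirely symmetric, using (i)$\Leftrightarrow$(ii) of the same lemma to swap a pair of orthogonal factors for a pair of symplectic ones.

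It remains to prove (a2) and (b1) directly, by the same template. For (a2), say $\sigma_1$ is symplectic and $\sigma_2,\sigma_3$ are orthogonal. The pair $(Q_1,\sigma_1)\otimes(Q_2,\sigma_2)$ is of symplectic type, so Lemma \ref{biquaternions-symplectic}(i)$\Rightarrow$(ii) realizes it as $(C(W,q_W),J_{q_W}^{\id})$ for some quadratic space $(W,q_W)$ with $\dim W=4$, while Proposition \ref{qua-cliff-sym}(b) writes $(Q_3,\sigma_3)\simeq(C(V_3,q_3),J_{q_3}^{\id})$ with $\dim V_3=2$. Applying Corollary \ref{cor-cliff-inv}(b), which is the relevant case since $\dim W=4\equiv 0\pmod 4$, with $\sigma_0=\id$ on $W$ (so $s=0$ and $(-1)^s=1$), the combined orthogonal symmetry on $W\perp V_3$ is $\id\oplus\id=\id$, producing $(A,\sigma)\simeq(C(V,q),J_q^{\id})$ with $\dim V=6$. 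For (b1), with $\sigma_1,\sigma_2$ symplectic and $\sigma_3$ orthogonal, I would instead pair $(Q_1,\sigma_1)\otimes(Q_3,\sigma_3)$ (symplectic type) and apply Lemma \ref{biquaternions-symplectic}(i)$\Rightarrow$(iii) to express it as $(C(W,q_W),J_{q_W}^{-\id})$ with $\dim W=4$; Proposition \ref{qua-cliff-sym}(a) gives $(Q_2,\sigma_2)\simeq(C(V_2,q_2),J_{q_2}^{-\id})$. Here $\sigma_0=-\id$ on $W$ has $s=4$, so $(-1)^s=1$, and Corollary \ref{cor-cliff-inv}(b) produces the combined symmetry $-\id\oplus-\id=-\id$, giving $(A,\sigma)\simeq(C(V,q),J_q^{-\id})$.

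The only delicate point is the sign bookkeeping: Corollary \ref{cor-cliff-inv} delivers a combined orthogonal symmetry of the form $\sigma_0\oplus(-1)^s\sigma$, and one must pair the quaternion factors and choose individual representations so that both summands are simultaneously $+\id$ or simultaneously $-\id$. The freedom to choose between $J^{\id}$ and $J^{-\id}$ in Lemma \ref{biquaternions-symplectic}, together with the freedom to swap orthogonal and symplectic pairs in Lemma \ref{biquaternions-orthogonal}, is exactly what allows the signs to line up.
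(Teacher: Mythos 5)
Your proof is correct and follows essentially the same route as the paper: reduce to the mixed-type case via Lemma~\ref{biquaternions-orthogonal}, realize the biquaternion subproduct by Lemma~\ref{biquaternions-symplectic}, write the remaining quaternion via Proposition~\ref{qua-cliff-sym}, and combine with Corollary~\ref{cor-cliff-inv}(b), checking that $(-1)^s = 1$ so the orthogonal symmetries add up to $\pm\id$. The only cosmetic difference is which pair of factors you group in case (b), which does not affect the argument.
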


\begin{proof}
If $\sigma$ is symplectic then we may assume that either all $\sigma_i$,
$i=1,2,3$, are symplectic or, $\sigma_1$ is symplectic and $\sigma_2$
and $\sigma_3$ are orthogonal.   
Thanks to \ref{biquaternions-orthogonal}, the first case is reduced to the
second case. 
In the second case, there exist quadratic spaces $(V_1,q_1)$,
$(V_2,q_2)$ and $(V_3,q_3)$ of dimension $2$ over $K$ such that
$(C(V_1,q_1),J_{q_1}^{-\id})\simeq (Q_1,\sigma_1)$,
$(C(V_2,q_2),J_{q_2}^{\id})\simeq (Q_2,\sigma_2)$ and
$(C(V_3,q_3),J_{q_3}^{\id})\simeq (Q_3,\sigma_3)$. 
According to \ref{biquaternions-symplectic}, there exists a quadratic
space $(W,h)$ of dimension $4$ over $K$ such that
$$(C(W,h),J_{h}^{\id})\simeq (Q_1,\sigma_1)\otimes(Q_2,\sigma_2).$$
According to \ref{qua-cliff-sym}, there exists a quadratic space
$(W',h')$ of dimension $2$ over $K$ such
that: $$(C(W',h'),J_{h'}^{\id})\simeq (Q_3,\sigma_3).$$  
We thus obtain
\begin{equation*}
\begin{array}{lll}
(Q_1,\sigma_1)\otimes(Q_2,\sigma_2)\otimes(Q_3,\sigma_3)&\simeq& 
(C(W,h),J_{h}^{\id})\otimes(C(W',h'),J_{h}^{\id})\\
\textrm{using \ref{cor-cliff-inv} (b)} &\simeq& (C(W\perp
W',h\perp(\d h)\cdot h'),
J_{h\perp (\d h)\cdot h'}^{\id\oplus\id}).
\end{array}
\end{equation*}
If $\sigma$ is orthogonal then we may assume that either all $\sigma_i$,
$i=1,2,3$, are orthogonal or, $\sigma_1$ is orthogonal and $\sigma_2$
and $\sigma_3$ are symplectic. 
But thanks to \ref{biquaternions-orthogonal}, the first case is
reduced to the second one.  
In the second case, according to \ref{biquaternions-symplectic}, there exists a quadratic
space $(W,h)$ of dimension $4$ over $K$ such that
$$(C(W,h),J_{h}^{-\id})\simeq (Q_1,\sigma_1)\otimes(Q_2,\sigma_2).$$
According to \ref{qua-cliff-sym}, there exists a quadratic space
$(W',h')$ of dimension $2$ over $K$ such
that: $$(C(W',h'),J_{h'}^{-\id})\simeq (Q_3,\sigma_3).$$  
We thus obtain 
\begin{equation*}
\begin{array}{lll}
(Q_1,\sigma_1)\otimes(Q_2,\sigma_2)\otimes(Q_3,\sigma_3)&\simeq& 
(C(W,h),J_{h}^{-\id})\otimes(C(W',h'),J_{h'}^{-\id})\\
\textrm{using \ref{cor-cliff-inv} (b)} &\simeq& (C(W\perp
W',h\perp(\d h)\cdot h'),
J_{h\perp (\d h)\cdot h'}^{-\id\oplus-\id}).
\end{array}
\end{equation*}
The proof is thus achieved.
\end{proof}

\begin{lem}\label{4-quaternions}
Let $(Q_1,\sigma_1)$, $(Q_2,\sigma_2)$, $(Q_3,\sigma_3)$ and $(Q_4,\sigma_4)$ be
quaternion algebras with involutions of the first kind. 
Let $(A,\sigma)=(Q_1,\sigma_1)\otimes(Q_2,\sigma_2)\otimes(Q_3,\sigma_3)\otimes(Q_4,\sigma_4)$.
\begin{itemize}
\item[(a)] if $\sigma$ is orthogonal then there exist quadratic
  spaces $(V,q)$ and $(V',q')$ of dimension $8$ over $K$ such that
  $(A,\sigma)\simeq(C(V,q),J_q^{\id})$ and $(A,\sigma)\simeq(C(V',q'),J_{q'}^{-\id})$.
\item[(b)] if $\sigma$ is symplectic then there exists a quadratic
  space $(A,\sigma)$ of dimension $8$ over $K$ and a reflection $\tau$
  of $(V,q)$ such that $(A,\sigma)\simeq(C(V,q),J_q^{\tau})$.
\end{itemize} 
\end{lem}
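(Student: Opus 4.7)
My plan is to partition the four quaternion factors into two biquaternion pairs, realize each pair as the Clifford algebra of a $4$-dimensional quadratic space equipped with a particularly simple involution via Lemmas~\ref{biquaternions-orthogonal} and~\ref{biquaternions-symplectic}, and then assemble these two halves using Corollary~\ref{cor-cliff-inv}(b) (which applies since each factor space has dimension $4 \equiv 0 \pmod 4$). The key preliminary observation is the parity rule for involution types under tensor products: the involution $\sigma = \sigma_1 \otimes \cdots \otimes \sigma_n$ is orthogonal if and only if an even number of the $\sigma_i$ are symplectic. Thus (a) corresponds to $0$, $2$, or $4$ symplectic factors among the $\sigma_i$, while (b) corresponds to $1$ or $3$ symplectic factors.

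For (a), Lemma~\ref{biquaternions-orthogonal} allows me to freely exchange a symplectic-symplectic pair for an orthogonal-orthogonal pair inside any biquaternion factor of $A$. Using this flexibility, one can always arrange the four algebras so that exactly two of the $\sigma_i$ are symplectic and so that the pairings $(Q_1,Q_2)$ and $(Q_3,Q_4)$ each contain one symplectic and one orthogonal factor. Each pair is then a biquaternion with symplectic tensor involution, so by Lemma~\ref{biquaternions-symplectic} each is isomorphic to $(C(W_i, h_i), J_{h_i}^{\id})$ for some $4$-dimensional quadratic space $(W_i, h_i)$, and also to $(C(W_i', h_i'), J_{h_i'}^{-\id})$. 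Combining the $\id$-versions via Corollary~\ref{cor-cliff-inv}(b) with $\sigma_0 = \id$ (so $s=0$) yields $(C(W_1 \perp W_2, h_1 \perp \d h_1 \cdot h_2), J^{\id \oplus \id})$, giving the desired isomorphism with $J^{\id}$; the parallel choice with $-\id$ on both factors yields the isomorphism with $J^{-\id}$.

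For (b), the odd number of symplectic $\sigma_i$ forces, possibly after regrouping via Lemma~\ref{biquaternions-orthogonal}, a configuration in which one biquaternion pair consists of one symplectic and one orthogonal factor while the other pair consists of two orthogonal factors. Lemma~\ref{biquaternions-symplectic} realizes the first pair as $(C(W_1, h_1), J_{h_1}^{\id})$ with $\dim W_1 = 4$, and Lemma~\ref{biquaternions-orthogonal} realizes the second pair as $(C(W_2, h_2), J_{h_2}^{\rho})$ for some reflection $\rho$ of the $4$-dimensional space $(W_2, h_2)$. Corollary~\ref{cor-cliff-inv}(b) then combines them into $(C(W_1 \perp W_2, h_1 \perp \d h_1 \cdot h_2), J^{\id \oplus \rho})$. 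Because $\id$ on $W_1$ contributes no anti-symmetric vectors and $\rho$ contributes a $1$-dimensional anti-symmetric subspace of $W_2$, the symmetry $\tau := \id \oplus \rho$ of the $8$-dimensional space $W_1 \perp W_2$ has $1$-dimensional anti-symmetric subspace, hence is a reflection, as claimed.

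The main obstacle I expect is the combinatorial case analysis: verifying that Lemma~\ref{biquaternions-orthogonal} can always be applied to force the convenient pair-configurations described above, and then checking that the sign factor $(-1)^s$ appearing in Corollary~\ref{cor-cliff-inv}(b) is trivial precisely because the choice $\sigma_0 = \id$ (with $s=0$) prevents any unwanted sign from creeping into the $\id$, $-\id$, or reflection claimed in the conclusion.
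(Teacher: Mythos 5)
Your argument is correct, and part (a) follows the paper's proof essentially verbatim (pair so each biquaternion has one symplectic and one orthogonal factor, realize each via Lemma \ref{biquaternions-symplectic} with $J^{\pm\id}$ on a $4$-dimensional space, then glue with Corollary \ref{cor-cliff-inv}(b)). For part (b), however, you take a genuinely different decomposition. The paper groups three of the quaternion factors together as a degree-$8$ piece realized via Lemma \ref{3-quaternions} as $(C(V_1,q_1),J_{q_1}^{\id})$ with $\dim V_1=6$, keeps the fourth factor alone as $(C(V_2,q_2),J_{q_2}^{\tau'})$ with $\dim V_2=2$ and $\tau'$ a reflection, and then applies Corollary \ref{cor-cliff-inv}(a) (since $6\equiv 2\bmod 4$) to get $\tau=\id\oplus(-\tau')$, which has a $1$-dimensional anti-symmetric part. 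You instead split into two biquaternion halves of dimension $4$ each: one pair (symplectic $\otimes$ orthogonal) carrying $J^{\id}$ via Lemma \ref{biquaternions-symplectic}, the other pair (orthogonal $\otimes$ orthogonal) carrying $J^{\rho}$ for a reflection $\rho$ via Lemma \ref{biquaternions-orthogonal}(iii), and then glue with Corollary \ref{cor-cliff-inv}(b) (since $4\equiv 0\bmod 4$, $\sigma_0=\id$, $s=0$) to get $\tau=\id\oplus\rho$, again with a $1$-dimensional anti-symmetric part. Your route is slightly more symmetric --- both parts (a) and (b) use the same $4+4$ split and the same branch of Corollary \ref{cor-cliff-inv} --- and it avoids an appeal to Lemma \ref{3-quaternions}, at the cost of invoking both clauses of the biquaternion lemmas; the paper's $6+2$ split more directly mirrors the inductive structure it sets up for Propositions \ref{tensor-odd} and \ref{tensor-even}. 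Both are valid, and your bookkeeping of the sign $(-1)^s$ and the dimension of the anti-symmetric subspace is correct throughout.
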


\begin{proof}
(a) As $\sigma$ is orthogonal, the numbers of $\sigma_i$, $i=1,\cdots,4$, which are
symplectic, should be even.  
Thanks to \ref{biquaternions-orthogonal}, we are reduced to consider the case
where two of $\sigma_i$, $i=1,\cdots,4$, are orthogonal and two of them
are symplectic. 
Without loss of generality, it may be assumed that $\sigma_1$ and $\sigma_3$ are orthogonal and $\sigma_2$ and
$\sigma_4$ are symplectic. 
There exist so quadratic spaces $(V_i,q_i)$, $i=1,\cdots,4$, such
that $(Q_i,\sigma_i)\simeq(C(V_i,q_i),J_{q_i}^{\id})$ for $i=1,\ 3$
and $(Q_i,\sigma_i)\simeq(C(V_i,q_i),J_{q_i}^{-\id})$ for $i=2,\ 4$. 
Using \ref{biquaternions-symplectic}, there exist quadratic spaces
$(V_1,q_1)$ and $(V_2,q_2)$ of dimension $4$ over $K$ such that:
$$(Q_1,\sigma_1)\otimes(Q_2,\sigma_2)\simeq
(C(V_1,q_1),J_{q_1}^{\id}),$$
$$(Q_3,\sigma_3)\otimes(Q_3,\sigma_3)\simeq (C(V_2,q_2),J_{q_2}^{\id}).$$
We thus obtain
\begin{eqnarray*}
(A,\sigma)&\simeq&(C(V_1,q_1),J_{q_1}^{\id})\otimes(C(V_2,q_2),J_{q_2}^{\id})\\
\text{using \ref{cor-cliff-inv}
  (b)}&\simeq&(C(V_1\perp V_2,q_1\perp \d q_1\cdot q_2),J_{q_1\perp \d q_1\cdot q_2}^{\id\oplus\id}).
\end{eqnarray*}
The proof of the other assertion of (a) is similar and is left to the
reader.

(b) Using \ref{biquaternions-orthogonal}, we may assume that
$\sigma_1$, $\sigma_2$ and $\sigma_3$ are symplectic and $\sigma_4$
is orthogonal. 
According to \ref{biquaternions-orthogonal} and \ref{3-quaternions},
there exist quadratic spaces
$(V_1,q_1)$ and $(V_2,q_2)$, {respectively}, of dimension $6$ and $2$
over $K$, and a reflection $\tau'$ of $(V_2,q_2)$ such that:
$$(C(V_1,q_1),J_{q_1}^{\id})\simeq(Q_1,\sigma_1)\otimes(Q_2,\sigma_2)\otimes(Q_3,\sigma_3).$$ 
$$(C(V_2,q_2),J_{q_2}^{\tau'})\simeq(Q_1,\sigma_1)$$   
We so obtain
\begin{eqnarray*}
(A,\sigma)&\simeq&(C(V_1,q_1),J_{q_1}^{\id})\otimes(C(V_2,q_2),J_{q_2}^{\tau'})\\
\text{using \ref{cor-cliff-inv}
  (a)}&\simeq&(C(V_1\perp V_2,q_1\perp \d q_1\cdot q_2),J_{q_1\perp \d q_1\cdot q_2}^{\id\oplus-\tau'}).
\end{eqnarray*}
Therefore it suffices to put $(V,q)=(V_1\perp V_2,q_1\perp \d q_1\cdot
q_2)$ and $\tau=\id|_{V_1}\oplus-\tau'|_{V_2}$.
\end{proof}

\begin{prop}\label{tensor-odd}
Let $n$ be an odd positive integer. 
Let $(Q_1,\sigma_1),\cdots,(Q_n,\sigma_n)$ be quaternion algebras with
involution of the first kind over a field $K$ and let
$$(A,\sigma)=(Q_1,\sigma_1)\otimes\cdots\otimes(Q_n,\sigma_n).$$ 
Then: 
\begin{itemize}
\item[(a)] If $n\equiv 1 \mod 4$ and if $\sigma$ is symplectic then there exists a quadratic
  space $(V,q)$ of dimension $2n$ over $K$ such that 
$(A,\sigma)\simeq (C(V,q),J_q^{-\id})$.
\item[(b)] If $n\equiv 1 \mod 4$ and if $\sigma$ is orthogonal then there exists a quadratic
  space $(V,q)$ of dimension $2n$ over $K$ such that 
$(A,\sigma)\simeq (C(V,q),J_q^{\id})$.
\item[(c)] If $n\equiv 3 \mod 4$ and if $\sigma$ is symplectic then there exists a quadratic
  space $(V,q)$ of dimension $2n$ over $K$ such that $(A,\sigma)\simeq (C(V,q),J_q^{\id})$.
\item[(d)] If $n\equiv 3 \mod 4$ and if $\sigma$ is orthogonal then there exists a quadratic
  space $(V,q)$ of dimension $2n$ over $K$ such that $(A,\sigma)\simeq (C(V,q),J_q^{-\id})$.
\end{itemize} 
\end{prop}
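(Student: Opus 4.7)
The plan is to proceed by induction on the odd integer $n$, with base cases $n=1$ and $n=3$ supplied directly by Proposition \ref{qua-cliff-sym} and Lemma \ref{3-quaternions} respectively. The inductive step will go from $n$ to $n+2$; since this flips the residue of $n$ modulo $4$, the two base cases together cover all four alternatives (a)--(d).

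For the step, write $(A,\sigma)=(A_0,\sigma_0)\otimes(A',\sigma')$ with $(A_0,\sigma_0)=(Q_1,\sigma_1)\otimes\cdots\otimes(Q_n,\sigma_n)$ and $(A',\sigma')=(Q_{n+1},\sigma_{n+1})\otimes(Q_{n+2},\sigma_{n+2})$. After permuting the factors, and, if the $\sigma_i$ all happen to be of the same type, using Lemma \ref{biquaternions-orthogonal} once to replace an orthogonal-orthogonal sub-pair by a symplectic-symplectic one (or vice versa), one may assume that $\sigma_{n+1}$ and $\sigma_{n+2}$ have opposite types. Then $\sigma'$ is symplectic, so by Lemma \ref{biquaternions-symplectic} we have $(A',\sigma')\simeq(C(V',q'),J_{q'}^{\epsilon'\id})$ for either choice of sign $\epsilon'\in\{+,-\}$. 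The induction hypothesis applied to $(A_0,\sigma_0)$ furnishes $(A_0,\sigma_0)\simeq(C(V_0,q_0),J_{q_0}^{\epsilon_0\id})$ with a sign $\epsilon_0$ dictated by the residue of $n$ modulo $4$ and the type of $\sigma_0$; observe that $\sigma_0$ has the type opposite to $\sigma'$, so $\sigma_0$ is orthogonal exactly when $\sigma$ is symplectic, and symplectic exactly when $\sigma$ is orthogonal.

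Since $\dim V_0=2n\equiv 2\pmod{4}$, Corollary \ref{cor-cliff-inv}(a) assembles the two pieces into $(C(V_0\perp V',q_0\perp\d q_0\cdot q'),J^{\epsilon_0\id\oplus(-1)^{s+1}\epsilon'\id})$, where $s=\dim V_0^{-}$ is either $0$ or $2n$ and is therefore even, so $(-1)^{s+1}=-1$. Choosing $\epsilon'=-\epsilon_0$ collapses the combined involution to $J^{\epsilon_0\id}$ on the whole of $V_0\perp V'$, realizing $(A,\sigma)$ in the form demanded by the proposition. What remains is a routine case check that $\epsilon_0$ coincides with the sign prescribed for $n+2$ in the four cases; for instance, if $n\equiv 1\pmod{4}$ and $\sigma$ is symplectic, then $\sigma_0$ is orthogonal and case (b) for $n$ yields $\epsilon_0=+$, while $n+2\equiv 3\pmod{4}$ with $\sigma$ symplectic is case (c), which indeed requires sign $+$. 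The remaining three sub-cases are verified by the same bookkeeping.

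The principal subtlety is the preparatory step that arranges $\sigma'$ to be symplectic. When the $\sigma_i$ are of mixed types a mere permutation of factors suffices, but when they are uniform one must genuinely exchange a sub-pair via the equivalence (i)$\Leftrightarrow$(ii) of Lemma \ref{biquaternions-orthogonal}. This equivalence is automatic whenever one starts from a pair already in orthogonal-orthogonal or symplectic-symplectic form, so this step always succeeds, after which the tensor bookkeeping above finishes the argument.
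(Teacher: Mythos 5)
Your proof is correct, and it takes a genuinely different inductive step than the paper. Both proofs induct on $n$ with base cases $n=1$ (via Proposition \ref{qua-cliff-sym}) and $n=3$ (via Lemma \ref{3-quaternions}) and advance by two quaternion factors at a time, but they assemble those two new factors quite differently. The paper normalizes so that \emph{all} of $\sigma_1,\dots,\sigma_n$ are of the same type (using Lemma \ref{biquaternions-orthogonal}), then realizes each of the two new factors individually as a $2$-dimensional Clifford algebra with $J^{\pm\id}$ and glues them onto the inductive piece with two successive applications of Corollary \ref{cor-cliff-inv}, tracking signs at each step. You instead normalize the \emph{last pair} $\sigma_{n+1},\sigma_{n+2}$ to have opposite types, making $\sigma'=\sigma_{n+1}\otimes\sigma_{n+2}$ symplectic; Lemma \ref{biquaternions-symplectic} then realizes $(A',\sigma')$ as $(C(V',q'),J_{q'}^{\epsilon'\id})$ with the sign $\epsilon'$ \emph{freely choosable}, and a single application of Corollary \ref{cor-cliff-inv}(a) with $\dim V_0=2n\equiv 2\pmod 4$, $s$ even, and $\epsilon'=-\epsilon_0$ collapses the assembled involution to $J^{\epsilon_0\id}$ directly. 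The free choice of $\epsilon'$ in Lemma \ref{biquaternions-symplectic} is exactly what buys you the single-step assembly; the paper instead pays for it with the extra intermediate computation. Your four-way sign check at the end is correct (using that $\sigma_0$ has the type opposite to $\sigma$, since $\sigma'$ is symplectic). One small slip: you wrote ``$\sigma_0$ has the type opposite to $\sigma'$''; you clearly meant ``opposite to $\sigma$'', as your own next clause confirms, and nothing downstream depends on the misstatement.
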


\begin{proof}
We prove all the assertions by induction on $n$. 
If $n=1$, the assertions (a) and (b) follow from
\ref{qua-cliff-sym}. 
If $n=3$, the assertions (c) and (d) follow from \ref{3-quaternions}. 
Suppose that the assertions (a) and (b) and the assertions (c) and (d)
are, respectively, true for $n=4k+1$ and $n=4k+3$ where $k$ is a
nonnegative integer.
We show that the statements (a) and (b) and the statements (c) and (d)
are, respectively, true for $n=4k+5$ and $n=4k+7$. 

We first consider the assertion (a) for the case where $n=4k+5$. 
As $\sigma$ is symplectic we deduce that either all of $\sigma_i$,
$i=1,\cdots,n$, are symplectic or, at least two of $\sigma_i$,
$i=1,\cdots,n$, say $\sigma_1$ and $\sigma_2$, are orthogonal.
Using \ref{biquaternions-orthogonal}, there exist two symplectic
involution $\sigma'_1$ and $\sigma'_2$ such that
$\sigma_1\otimes\sigma_2\simeq\sigma'_1\otimes\sigma'_2$. 
The second case is so reduced to the first one. 
We may thus assume that all of $\sigma_i$, $i=1,\cdots,n$, are
symplectic.   
As by induction hypothesis, (c) is true for $n=4k+3$, there
exists a quadratic space $(W,h)$ of dimension $8k+6$ over $K$ such
that $$(Q_3,\sigma_3)\otimes\cdots\otimes(Q_n,\sigma_n)\simeq (C(W,h),J_{h}^{\id}).$$ 
According to \ref{qua-cliff-sym}, there exist two quadratic spaces
$(V_1,q_1)$ such that
$(Q_1,\sigma_1)\simeq(C(V_1,q_1),J_{q_1}^{-\id})$ and
$(Q_2,\sigma_2)\simeq(C(V_2,q_2),J_{q_2}^{-\id})$. 
We thus obtain
\begin{equation*}
\begin{array}{lll}
(Q_1,\sigma_1)\otimes\cdots\otimes(Q_n,\sigma_n)&\simeq& 
(Q_1,\sigma_1)\otimes(Q_2,\sigma_2)\otimes(C(W,h),J_{h}^{\id})\\
\textrm{using \ref{cor-cliff-inv} (a)} &\simeq&
(Q_1,\sigma_1)\otimes(C(V_2\perp W,q_2\perp\d q_2\cdot
h),J_{q_2\perp\d q_2\cdot h}^{-\id\oplus-\id})\\
\textrm{using \ref{cor-cliff-inv} (b)} &\simeq&
(C(V_1\perp V_2\perp W,\d q'\cdot q_1\perp q'),J_{\d q'\cdot q_1\perp q'}^{-\id\oplus-\id\oplus-\id}),
\end{array}
\end{equation*}
where $q'=q_2\perp\d q_2\cdot h$.
The quadratic space $(V,q)=(V_1\perp V_2\perp W,\d q'\cdot q_1\perp
q')$ is indeed the one we were looking for.

We now prove the assertion (b) for the case where $n=4k+5$. 
By the same argument as before, we may assume that
all of $\sigma_i$, $i=1,\cdots,n$, are orthogonal. 
As by induction hypothesis (d) is true for $n=4k+3$, there exists
a quadratic space $(W,h)$ of dimension $8k+6$ over $K$ such that:
$$(Q_3,\sigma_3)\otimes\cdots\otimes(Q_n,\sigma_n)\simeq (C(W,h),J_{h}^{-\id}).$$   
According to \ref{qua-cliff-sym}, there exist quadratic spaces
$(V_1,q_1)$ and $(V_2,q_2)$ such that
$(Q_1,\sigma_1)\simeq(C(V_1,q_1),J_{q_1}^{\id})$ and
$(Q_2,\sigma_2)\simeq(C(V_2,q_2),J_{q_2}^{\id})$. 
We so obtain
\begin{equation*}
\begin{array}{lll}
(Q_1,\sigma_1)\otimes\cdots\otimes(Q_n,\sigma_n)&\simeq& 
(Q_1,\sigma_1)\otimes(Q_2,\sigma_2)\otimes(C(W,h),J_{h}^{-\id})\\
\textrm{using \ref{cor-cliff-inv} (a)} &\simeq&
(Q_1,\sigma_1)\otimes(C(V_2\perp W,q_2\perp\d q_2\cdot
h),J_{q_2\perp\d q_2\cdot h}^{\id\oplus\id})\\
\textrm{using \ref{cor-cliff-inv} (b)} &\simeq&
(C(V_1\perp V_2\perp W,\d q'\cdot q_1\perp q'),J_{\d q'\cdot q_1\perp q'}^{\id\oplus\id\oplus\id}),
\end{array}
\end{equation*}
where $q'=q_2\perp\d q_2\cdot h$.
The quadratic space $(V,q)=(V_1\perp V_2\perp W,\d q'\cdot q_1\perp
q')$ is indeed the one we were looking for.

We now prove the assertion (c) for the case where $n=4k+7$. 
As $\sigma$ is symplectic, by the same argument as before, we may assume that all of $\sigma_i$ are symplectic.  
As we have already shown that the assertion (a) is true for $n=4k+5$, there
exists so a quadratic space $(W,h)$ of dimension $8k+10$ such that
$$(Q_3,\sigma_3)\otimes\cdots\otimes(Q_n,\sigma_n)\simeq
(C(W,h),J_{h}^{-\id}).$$
According to \ref{qua-cliff-sym}, there exist quadratic spaces
$(V_1,q_1)$ and $(V_2,q_2)$ such that
$(Q_1,\sigma_1)\simeq(C(V_1,q_1),J_{q_1}^{-\id})$ and
$(Q_2,\sigma_2)\simeq(C(V_2,q_2),J_{q_2}^{-\id})$. 
We thus obtain
\begin{equation*}
\begin{array}{lll}
(Q_1,\sigma_1)\otimes\cdots\otimes(Q_n,\sigma_n)&\simeq& 
(Q_1,\sigma_1)\otimes(Q_2,\sigma_2)\otimes(C(W,h),J_{h}^{\id})\\
\textrm{using \ref{cor-cliff-inv} (a)} &\simeq&
(Q_1,\sigma_1)\otimes(C(V_2\perp W,q_2\perp\d q_2\cdot
h),J_{q_2\perp\d q_2\cdot h}^{-\id\oplus-\id})\\
\textrm{using \ref{cor-cliff-inv} (b)} &\simeq&
(C(V_1\perp V_2\perp W,\d q'\cdot q_1\perp q'),J_{\d q'\cdot q_1\perp q'}^{-\id\oplus-\id\oplus-\id}),
\end{array}
\end{equation*}
where $q'=q_2\perp\d q_2\cdot h$.

The proof of (d) is similar and is left to the reader.
\end{proof}

\begin{prop}\label{tensor-even}
Let $n$ be an even positive integer. 
Let $(Q_1,\sigma_1),\cdots,(Q_n,\sigma_n)$ be quaternion algebras with
involution of the first kind over a field $K$ and let
$$(A,\sigma)=(Q_1,\sigma_1)\otimes\cdots\otimes(Q_n,\sigma_n).$$ 
Then: 

\begin{itemize}
\item[(a)] If $n\equiv 0\mod 4$ and if $\sigma$ is orthogonal then
  there exist quadratic spaces $(V,q)$ and $(V',q')$ of dimension $2n$ over $K$
  such that $(A,\sigma)\simeq(C(V,q),J_q^{\id})$ and $(A,\sigma)\simeq(C(V',q'),J_{q'}^{-\id})$.
\item[(b)] If $n\equiv 0\mod 4$ and if $\sigma$ is symplectic then
  there exists a quadratic space $(V,q)$ of dimension $2n$ over $K$ and
  a reflection $\tau$ of $(V,q)$ such that $(A,\sigma)\simeq(C(V,q),J_q^{\tau})$.
\item[(c)] If $n\equiv 2\mod 4$ and if $\sigma$ is symplectic then
  there exist quadratic spaces $(V,q)$ and $(V',q')$ of dimension $2n$ over $K$
  such that $(A,\sigma)\simeq(C(V,q),J_q^{\id})$ and $(A,\sigma)\simeq(C(V',q'),J_{q'}^{-\id})$.
\item[(d)] If $n\equiv 2\mod 4$ and if $\sigma$ is orthogonal then
  there exists a quadratic space $(V,q)$ of dimension $2n$ over $K$ and
  a reflection $\tau$ of $(V,q)$ such that $(A,\sigma)\simeq(C(V,q),J_q^{\tau})$.
\end{itemize}
\end{prop}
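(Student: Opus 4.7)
The plan is to proceed by induction on the even integer $n$ with step $2$. The base cases are $n=2$, where part (c) follows from Lemma \ref{biquaternions-symplectic} and part (d) from Lemma \ref{biquaternions-orthogonal}, and $n=4$, where parts (a) and (b) are precisely Lemma \ref{4-quaternions}.

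For the inductive step $n\to n+2$ the uniform template is as follows. Split $(Q_1,\sigma_1)\otimes\cdots\otimes(Q_{n+2},\sigma_{n+2})$ as $(Q_1,\sigma_1)\otimes(Q_2,\sigma_2)$ tensored with the remaining $n$ factors. Represent the two-factor block as $(C(V_0,q_0),J_{q_0}^{\sigma_0})$ with $\dim V_0=4$: Lemma \ref{biquaternions-orthogonal} handles a same-type pair (giving $\sigma_0=\tau_0$ a reflection), while Lemma \ref{biquaternions-symplectic} handles an opposite-type pair (giving $\sigma_0=\pm\id$). Then apply the induction hypothesis to the remaining $n$ factors, and combine the two representations via Corollary \ref{cor-cliff-inv}(b), which applies since $\dim V_0=4\equiv 0\mod 4$ and produces the involution $J^{\sigma_0\oplus(-1)^s\sigma}$, where $s$ is the dimension of the anti-symmetric eigenspace of $\sigma_0$. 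When necessary, use Lemma \ref{biquaternions-orthogonal} beforehand to swap an orthogonal-orthogonal pair for a symplectic-symplectic pair (or vice versa) so that the intended split is combinatorially available; the parity of the number of symplectic $\sigma_i$ (odd when $\sigma$ is symplectic, even when $\sigma$ is orthogonal) together with the bound $n+2\geq 6$ in the inductive range guarantees that such a rearrangement produces the desired distribution.

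For parts (a) and (c), choose a split pair of opposite type, so $\sigma_0=\pm\id$ has $s=0$ or $s=4$ and hence $(-1)^s=1$. The induction hypothesis for the remainder (namely (c) at $n\equiv 2\mod 4$ when proving (a), and (a) at $n\equiv 0\mod 4$ when proving (c)) provides both $J^{\id}$ and $J^{-\id}$ representations, and combining yields $J^{\id\oplus\id}=J^{\id}$ or $J^{-\id\oplus-\id}=J^{-\id}$ on a $2(n+2)$-dimensional quadratic space, as required. For parts (b) and (d), choose a split pair of the same type, so $\sigma_0=\tau_0$ is a reflection with $s=1$; the remainder's tensor involution is of the opposite type to $\sigma$, placing it in case (a) or (c) at $n$, so induction again provides both $\pm\id$ representations. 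Selecting the $J^{-\id}$ representation for the remainder yields the combined involution $J^{\tau_0\oplus\id}$, since $(-1)^1\cdot(-\id)=\id$, whose anti-symmetric eigenspace has dimension $1+0=1$, hence is a reflection, as claimed.

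The main obstacle is the bookkeeping: verifying that an appropriate split is always combinatorially available after invoking Lemma \ref{biquaternions-orthogonal}, and then tracking the sign $(-1)^s$ in Corollary \ref{cor-cliff-inv}(b) so that the combined symmetry has the advertised form. The critical point in (b) and (d) is that the $J^{-\id}$ representation of the remainder must be chosen rather than the $J^{\id}$ one: with $\sigma_0=\tau_0$ and $s=1$, the $J^{\id}$ representation would produce $J^{\tau_0\oplus-\id}$, whose anti-symmetric eigenspace has dimension $1+2n$, far from a reflection.
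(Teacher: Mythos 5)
Your proof is correct and reaches the same conclusion as the paper's, but by a genuinely different inductive scheme. The paper steps by $4$: it splits off a four-factor block via Lemma \ref{4-quaternions}, so the remainder stays in the same residue class mod $4$ and the induction appeals to the same part of the proposition at $n-4$. You step by $2$, splitting off a single quaternion pair handled by Lemma \ref{biquaternions-orthogonal} or Lemma \ref{biquaternions-symplectic}, and feed the remainder into the adjacent case. Your version is a bit leaner in that Lemma \ref{4-quaternions} serves only as a base case, not as a building block at each inductive step; the cost is that the induction crosses between residue classes mod $4$ at every step, which makes the case tracking heavier. Both routes rest on the same core lemmas plus Corollary \ref{cor-cliff-inv}(b), and both are valid.

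One slip to flag. In your treatment of (b) and (d) you assert that after removing a same-type pair the remainder's involution has the \emph{opposite} type to $\sigma$. That is backwards: a same-type pair corresponds via Lemma \ref{biquaternions-orthogonal} to $(C(V_0,q_0),J_{q_0}^{\tau_0})$ with $\tau_0$ a reflection on a $4$-dimensional form, which by Corollary \ref{type-reflection} is of orthogonal type; and tensoring by an orthogonal involution of the first kind preserves the type of the other factor, so the remainder has the \emph{same} type as $\sigma$. This is precisely what places it in case (c) (for (b)) or case (a) (for (d)), which is the conclusion you then state, so the final answer is right but the stated justification contradicts it. (For (a) and (c) the opposite-type pair gives a symplectic $4$-dimensional block $J^{\pm\id}$, so there the remainder genuinely is of the opposite type to $\sigma$ and your reasoning matches your conclusion.)
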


\begin{proof}
We prove all assertions by induction on $n$. 
According to \ref{biquaternions-orthogonal},
\ref{biquaternions-symplectic} and \ref{4-quaternions}, the assertions
(a) and (b) are true for $n=4$ and (c) and (d) are true
for $n=2$.  
Suppose that the assertions (a) and (b) are true for $n=4k+4$ and (c) and (d) are true for $n=4k+2$ where $k$ is a
nonnegative integer. 
We should prove that the statements (a) and (b) are true for $n=4k+8$
and (c) and (d) are true for $n=4k+6$.    

We prove the assertion (a) for $n=4k+8$. 
As $\sigma$ is orthogonal, the number of the involutions $\sigma_i$,
$i=1,\cdots,n$, which are symplectic are even. 
Using \ref{biquaternions-orthogonal}, we may suppose that all of
$\sigma_i$, $i=1,\cdots,n$, are orthogonal. 
By induction  hypothesis there exists a quadratic space $(V',q')$
of dimension $8k+8$ over $K$ such
that $$(Q_5,\sigma_5)\otimes\cdots\otimes(Q_n,\sigma_n)\simeq(C(V',q'),J_{q'}^{\id}).$$ 
Using \ref{4-quaternions}, there exists a quadratic form $(V'',q'')$ of
dimension $8$ over $K$ such
that 
\begin{equation}
  \label{eq:Q1-Q4}
(Q_1,\sigma_1)\otimes\cdots\otimes(Q_4,\sigma_4)\simeq(C(V'',q''),J_{q''}^{\id}).
\end{equation}
We thus obtain
\begin{eqnarray*}
 (A,\sigma)&\simeq&(C(V'',q''),J_{q''}^{\id})\otimes(C(V',q'),J_{q'}^{\id})\\
\text{using \ref{cor-cliff-inv} (b)}&\simeq&(C(V''\perp V',q''\perp\d
q''\cdot q'), J_{q''\perp\d q''\cdot q'}^{\id\oplus\id}).
\end{eqnarray*}
It suffices to put $(V,q)=(V''\perp V',q''\perp\d q''\cdot q')$.
The proof of the second assertion of (a) is similar. 

In order to prove the assertion (b) for $n=4k+8$, note that as $\sigma$ is symplectic, using \ref{biquaternions-orthogonal}
 we may suppose that $\sigma_n$ is symplectic and all $\sigma_i$, for
 $i=1,\cdots,n-1$, are orthogonal. 
By induction hypothesis, there exists a quadratic space
$(V',q')$ of dimension $8k+8$ over $K$ and a reflection $\tau'$ of $(V',q')$ such
that $$(Q_5,\sigma_5)\otimes\cdots\otimes(Q_n,\sigma_n)\simeq(C(V',q'),J_{q'}^{\tau'}).$$ 
The relation (\ref{eq:Q1-Q4}) is also satisfied. 
We thus obtain 
\begin{eqnarray*}
 (A,\sigma)&\simeq&(C(V'',q''),J_{q''}^{\id})\otimes(C(V',q'),J_{q'}^{\tau'})\\
\text{using \ref{cor-cliff-inv} (b)}&\simeq&(C(V''\perp V',q''\perp\d
q''\cdot q'), J_{q''\perp\d q''\cdot q'}^{\id\oplus\tau'}).
\end{eqnarray*}
It suffices to put $(V,q)=(V''\perp V',q''\perp\d q''\cdot q')$ and $\tau=\id|_{V''}\oplus\tau'$.

The proof of the assertions (c) and (d) are similar and are left to
the reader.
\end{proof}

\subsection{Involutions of the second kind}

\begin{lem} \label{Albert-generalization}{\rm (Albert).}
Let $(Q,\sigma)$ be a quaternion algebra with involution over a field $K$. 
The following assertions are equivalent:
\begin{itemize}
\item[(i)] The $\sigma$ involution $\sigma$ is of the second kind, in
  other words $\sigma|_K$ is a nontrivial automorphism of $K$. 
\item[(ii)] There exists a subfield $k$ of $K$ with $[K:k]=2$ and a
  quaternion algebra $Q_0$ over $k$ such that $(Q,\sigma)\simeq
  (Q_0\otimes_kK,\gamma\otimes-)$ where $\gamma$ is the canonical
  involution of $Q_0$ and $-:K\rightarrow K$ is the nontrivial
  automorphism of $K/k$.    
\item[(iii)] There exists a subfield $k$ of $K$ with $[K:k]=2$ and a
  quaternion algebra $Q_0$ over such that $(Q,\sigma)\simeq
  (Q_0\otimes_kK,\rho\otimes-)$ where $\rho$ is an orthogonal
  involution of $Q_0$ and $-:K\rightarrow K$ is the nontrivial
  automorphism of $K/k$.    
\item[(iv)] There exists a quadratic space $(V,q)$ of dimension $3$
  over $K$ such that $(Q,\sigma)\simeq(C(V,q),J_q^{\id})$.
\end{itemize}
\end{lem}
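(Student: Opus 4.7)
The plan is to prove the cycle (i) $\Rightarrow$ (ii) $\Rightarrow$ (iv) $\Rightarrow$ (i), and separately (ii) $\Leftrightarrow$ (iii). Set $k = K^\sigma$; then the three-dimensional quadratic space appearing in (iv) must be understood over $k$ so that the dimensions match ($\dim_k C(V,q) = 8 = \dim_k Q$). The implication (i) $\Rightarrow$ (ii) is the classical Albert descent theorem for quaternion algebras with involution of the second kind: one exhibits a $\sigma$-stable quaternion $k$-subalgebra $Q_0 \subset Q$ on which $\sigma$ restricts to the canonical involution $\gamma$, and verifies that $Q = Q_0 \otimes_k K$, $\sigma = \gamma \otimes -$. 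This is cited; it is the only nontrivial descent argument required, and is the main obstacle of the whole proof.

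The implication (iv) $\Rightarrow$ (i) is immediate from Lemma \ref{lemme-jz}. Let $z$ be the product of an orthogonal basis of $V$. By Lemma \ref{lemme-jz}(2), $z^2 = \d q \in k^*$, so $k[z]$ is a quadratic subfield of $Z(C(V,q))$; by Lemma \ref{lemme-jz}(1) with $s=0$ and $n=3$, $J_q^{\id}(z) = (-1)^{3 \cdot 2/2} z = -z$, so $J_q^{\id}$ acts nontrivially on $k[z]$, hence the transported involution on $Q$ is of the second kind.

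For (ii) $\Rightarrow$ (iv), write $K = k(\sqrt{d})$ and let $Q_0^- \subset Q_0$ be the three-dimensional $k$-space of $\gamma$-anti-symmetric (pure) quaternions. Define the three-dimensional $k$-subspace
\begin{equation*}
V := Q_0^- \cdot \sqrt{d} \subset Q_0 \otimes_k K.
\end{equation*}
For $v = x\sqrt{d}$ with $x \in Q_0^-$, one has $v^2 = x^2 d = -\nrd(x)\, d \in k$; for $v_i = x_i\sqrt{d}$, the anticommutator $v_1 v_2 + v_2 v_1 = (x_1 x_2 + x_2 x_1)\,d$ lies in $k$ since the anticommutator of two pure quaternions is scalar (twice the trace pairing). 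Hence $V \hookrightarrow Q_0 \otimes_k K$ is a Clifford map for the restricted quadratic form $q(v) = v^2$, and extends by the universal property to a $k$-algebra homomorphism $\Phi : C(V,q) \to Q_0 \otimes_k K$ between algebras of the same $k$-dimension $8$. Surjectivity of $\Phi$ follows since products of pure quaternions span $Q_0$ (all lying in the image), and $\sqrt{d} = x^{-1}(x\sqrt{d})$ is recovered from any anisotropic $x \in Q_0^-$. Finally, $(\gamma \otimes -)(x \otimes \sqrt{d}) = (-x) \otimes (-\sqrt{d}) = x \otimes \sqrt{d}$, so $V$ is pointwise symmetric under $\gamma \otimes -$; thus $\Phi$ carries $J_q^{\id}$ to $\gamma \otimes -$, establishing (iv).

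For (ii) $\Leftrightarrow$ (iii), pick any anisotropic pure quaternion $u \in Q_0^-$ and set $\rho = \intt(u) \circ \gamma$, an orthogonal involution on $Q_0$. The element $w = u \otimes \sqrt{d}$ satisfies $(\gamma \otimes -)(w) = (-u) \otimes (-\sqrt{d}) = w$, hence is symmetric under $\gamma \otimes -$, and because $\sqrt{d}$ is central $\intt(w) = \intt(u \otimes 1)$; consequently $\rho \otimes - = \intt(w) \circ (\gamma \otimes -)$. Conjugation by a symmetric invertible element produces an isomorphic involution, so $(Q_0 \otimes_k K, \gamma \otimes -) \simeq (Q_0 \otimes_k K, \rho \otimes -)$; the reverse implication is symmetric. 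The hard part of the argument is the classical Albert descent used in (i) $\Rightarrow$ (ii); once that is granted, every remaining step is a direct application of the universal property of the Clifford algebra together with Lemma \ref{lemme-jz}.
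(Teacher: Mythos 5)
Your implication (ii)$\Rightarrow$(iv) by the explicit Clifford map $Q_0^-\cdot\sqrt{d}\hookrightarrow Q_0\otimes_kK$ is a genuinely different and cleaner route than the paper's (which derives (iv) by a chain of applications of the Clifford structure theorems of Section 5), and your reading of (iv) with the quadratic space defined over $k$ rather than $K$, as the dimension count forces, is the right one. The argument (iv)$\Rightarrow$(i) via $z$ and Lemma \ref{lemme-jz} is also fine.

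However, your argument for (ii)$\Rightarrow$(iii) has a genuine gap. You write that since $\rho\otimes- = \intt(w)\circ(\gamma\otimes-)$ with $w = u\otimes\sqrt{d}$ a $(\gamma\otimes-)$-symmetric unit, ``conjugation by a symmetric invertible element produces an isomorphic involution.'' This is not so: for a $\sigma$-symmetric unit $w$, the inner automorphism $\intt(w)$ is an automorphism of $(A,\sigma)$ itself, since $\intt(w)\circ\sigma\circ\intt(w)^{-1} = \intt(w\sigma(w)^{-1})\circ\sigma = \sigma$; it does \emph{not} intertwine $\sigma$ with $\intt(w)\circ\sigma$. In general $\sigma$ and $\intt(w)\circ\sigma$ are conjugate only when $w\in Z(A)^\times\cdot\{v\sigma(v):v\in A^\times\}$, which can fail. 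Concretely take $k=\mathbb{R}$, $K=\mathbb{C}$, $Q_0=\mathbb{H}$: then $\gamma\otimes-$ on $\mathbb{H}\otimes_{\mathbb{R}}\mathbb{C}\simeq \mathrm{M}_2(\mathbb{C})$ is conjugate transposition, adjoint to the anisotropic Hermitian form $\langle 1,1\rangle$, whereas for every anisotropic pure $u$ the involution $\rho\otimes-$ with $\rho=\intt(u)\circ\gamma$ is adjoint to the hyperbolic form $\langle 1,-1\rangle$; they are not isomorphic, and indeed no orthogonal involution on $\mathbb{H}$ can work. The remedy is to allow a \emph{different} descent algebra $Q_0'$ over $k$ in (iii), which is exactly what the paper's chain of isomorphisms produces ($Q_0'=C(\langle b,-abc\rangle)$, not the original $Q_0$). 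Alternatively, you can obtain (iii) from your already-proved (iv): diagonalize $q\simeq\langle a_1,a_2,a_3\rangle$ and apply Proposition \ref{iso-cliff-inv}(a) with $V_0=\langle a_1,a_2\rangle$, giving
\begin{equation*}
(C(q),J_q^{\id})\simeq\bigl(C(\langle a_1,a_2\rangle),J^{\id}\bigr)\otimes_k\bigl(k[\sqrt{\d q}],-\bigr),
\end{equation*}
and $k[\sqrt{\d q}]\simeq K$ since it is the center. The converse (iii)$\Rightarrow$(ii) is unaffected, as it follows from (iii)$\Rightarrow$(i)$\Rightarrow$(ii).
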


\begin{proof}
Our sole contribution is to prove the equivalence of (ii), (iii) and (iv). 
According to \ref{qua-cliff-sym}, there exists a quadratic space
$(V_0,q_0)$ of dimension $2$ with $q_0\simeq\langle a,b\rangle$ such that
$(Q_0,\gamma)\simeq(C(V_0,q_0),J_{q_0}^{-\id})$.
We may assume that $K=k(\sqrt{c})$ where $c\in K$ in a non-square
element. 
We have $(K,-)\simeq(C(\langle c\rangle),J^{-\id})$.  
We thus obtain
\begin{eqnarray*}
(Q_0\otimes_kK,\gamma\otimes-)&\simeq&
(C(V_0,q),J_{q_0}^{-\id}))\otimes(C(\langle c\rangle),J^{-\id})\\
\text{using \ref{cor-cliff-inv} (a)}&\simeq&(C(\langle
a,b,-abc\rangle),J^{--+}) \\
\text{using \ref{iso-cliff-inv} (a)}&\simeq&(C(\langle
c\rangle),J^{-})\otimes(C(\langle b,-abc\rangle),J^{-+})\\
\text{using \ref{qua-cliff-sym}, $\exists d,\ e\in k^{\times}$}&\simeq&(C(\langle
c\rangle),J^{-})\otimes(C(\langle d,e\rangle),J^{++})\\
\text{using \ref{cor-cliff-inv} (a)}&\simeq&(C(\langle -dec,d,e\rangle),J^{+++})
\end{eqnarray*}
Thus it suffices to set $q=\langle -dec,d,e\rangle$. 
This implies the equivalence of (ii) and (iv).
In order to prove the equivalence of (ii) and (iii), note that according to the
above relations, we may take $(Q_0,\rho):=(C(\langle
b,-abc\rangle),J^{-+})$ or $(Q_0,\rho):=(C(\langle
d,e\rangle),J^{++})$ which are both the quaternion algebras with
orthogonal involutions. 
\end{proof}

\begin{prop}\label{tensor-second-kind}
Let $(A,\sigma)$ be a central simple algebra with unitary involution
over a field $K$. 
Let $k$ be the fixed field of $\sigma|_K$.
The following assertions are equivalent: 
\begin{itemize}
\item[(i)] There exist quaternion algebras with unitary involution
  $(A_1,\sigma_1),\cdots,(A_n,\sigma_n)$ over $K$ such that for all $i$ and
  $j$ we have $\sigma_i|_K=\sigma_j|_K$ and 
$$(A,\sigma)\simeq(A_1,\sigma_1)\otimes_K\cdots\otimes_K(A_n,\sigma_n).$$ 
\item[(ii)] There exist quaternion algebras with canonical involution  
  $(Q_1,\gamma_1),\cdots,(Q_n,\gamma_n)$ over $k$ such that
  \begin{equation}\label{equation-unitary}(A,\sigma)\simeq(Q_1,\gamma_1)\otimes_k\cdots\otimes_k(Q_n,\gamma_n)\otimes_k(K,\sigma|_K).\end{equation}
\item[(iii)] There exists a quadratic space $(V,q)$ of dimension
  $2n+1$ over $k$ with nontrivial discriminant such that 
$$(A,\sigma)\simeq(C(V,q),J_q^{\id}).$$ 
\end{itemize}
If $n$ is even, the assertions (i)-(iii) are equivalent to the following:  
\begin{itemize}
\item[(iv)] There exists a quadratic space $(V',q')$ of dimension
  $2n+2$ over $k$ such that 
$$(A,\sigma)\simeq(C_0(V',q'),J_{q'}^{\id}).$$
\end{itemize}
Moreover, if $n$ is odd the assertions (i)-(iii) are equivalent to the following:  
\begin{itemize}
\item[(v)] There exists a quadratic space $(V'',q'')$ of dimension
  $2n+2$ over $k$ and an orthogonal symmetry $\tau:V''\rightarrow V''$ such that 
$$(A,\sigma)\simeq(C_0(V'',q''),J_{q''}^{\tau}).$$
\end{itemize}
\end{prop}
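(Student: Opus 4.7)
The plan is to establish the equivalences (i) $\Leftrightarrow$ (ii) $\Leftrightarrow$ (iii), then to add (iv) or (v) according to the parity of $n$, following the architecture of the proofs of Propositions \ref{tensor-odd} and \ref{tensor-even} but now with a second-kind component present. The step (i) $\Leftrightarrow$ (ii) is immediate from Lemma \ref{Albert-generalization} applied factor by factor: each $(A_i,\sigma_i)$ with $\sigma_i|_K=\sigma|_K$ descends to $(Q_i\otimes_k K,\gamma_i\otimes-)$, and since the restrictions to $K$ agree the $K$-tensor product of these unitary factors is precisely $(Q_1\otimes_k\cdots\otimes_k Q_n)\otimes_k K$ equipped with $\gamma_1\otimes\cdots\otimes\gamma_n\otimes-$, which is (ii); the converse is immediate by base change.

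For (ii) $\Rightarrow$ (iii), Proposition \ref{qua-cliff-sym}(a) rewrites each $(Q_i,\gamma_i)$ as $(C(V_i,q_i),J^{-\id})$ with $\dim_k V_i=2$, and writing $K=k(\sqrt{c})$ gives $(K,-)\simeq(C(\lr{c}),J^{-\id})$. Iterated application of Corollary \ref{cor-cliff-inv} fuses these $n$ two-dimensional Clifford algebras together with the one-dimensional $C(\lr{c})$ into a single Clifford algebra of a form of dimension $2n+1$ over $k$, at each step twisting the orthogonal symmetry by a factor $(-1)^{s}$ dictated by the dimension modulo $4$ of the absorbed piece; the exchange isomorphism $(C(\lr{a,b}),J^{-\id})\simeq(C(\lr{-ab,b}),J^{\id})$ (the corollary after \ref{qua-cliff-sym}) is invoked when needed to flip sign types so that the cumulative orthogonal symmetry can be brought to $\id$. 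Since $\lr{c}$ contributes $K=k(\sqrt{c})$ to the discriminant of the assembled form, the resulting $(V,q)$ has $\disc q$ generating exactly the extension $K/k$, so in particular nontrivial. For (iii) $\Rightarrow$ (i), the procedure is reversed: Corollary \ref{cor-cliff-iso} splits off a $1$-dimensional anisotropic summand from $(V,q)$ giving $(K,-)$ as a factor, and Proposition \ref{iso-cliff-inv} successively detaches $2$-dimensional orthogonal summands to produce a $k$-tensor decomposition into $n$ quaternion algebras with first-kind involution tensored with $(K,-)$; base-changing the quaternion factors to $K$ and re-absorbing the $(K,-)$-factor then produces form (i).

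The equivalences (iii) $\Leftrightarrow$ (iv) (for $n$ even) and (iii) $\Leftrightarrow$ (v) (for $n$ odd) follow from Corollaries \ref{C0C-id} and \ref{C0Cid}, which identify $(C(q),J^{\id})$ for $q$ of odd dimension with the even Clifford algebra of a form of even dimension $2n+2$: the symmetry inducing the involution on $C_0$ is either $\pm\id$ (via \ref{C0C-id}) or the reflection $\tau=-\id\oplus\id$ (via \ref{C0Cid}). Which of these identifications places the involution in the desired shape depends on the residue of $\dim V=2n+1$ modulo $4$, which in turn is governed by the parity of $n$, giving the dichotomy between (iv) and (v).

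The main obstacle will be the sign bookkeeping in the fusion step (ii) $\Rightarrow$ (iii) and in its inverse: the sign twists in Corollary \ref{cor-cliff-inv} alter the orthogonal symmetry at each amalgamation, and only by judiciously invoking the exchange isomorphism between $(C(\lr{a,b}),J^{-\id})$ and $(C(\lr{-ab,b}),J^{\id})$ can all intermediate symmetries be realigned so that the cumulative involution reduces to $J^{\id}$ (respectively, to the required reflection in (v)). A secondary subtlety is to verify that the quadratic extension produced by $\disc q$ in (iii) coincides with the prescribed $K/k$ and not merely some a priori arbitrary quadratic extension of $k$; this tracks cleanly through the construction because $K$ enters explicitly as the one-dimensional factor $\lr{c}$ in the fusion process.
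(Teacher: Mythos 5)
The overall plan of your proposal — use Lemma \ref{Albert-generalization} for (i)$\Leftrightarrow$(ii), fuse factors using \ref{cor-cliff-inv} to obtain a $(2n+1)$-dimensional Clifford presentation for (iii), then pass to the even Clifford algebra for (iv)/(v) — is the right shape, and the treatment of (i)$\Leftrightarrow$(ii) and of the discriminant claim in (iii) is sound. The paper's actual route to (iii) is more economical: rather than fusing the $n$ two-dimensional pieces one at a time as you propose, it first invokes Propositions \ref{tensor-odd} and \ref{tensor-even} (already proved) to collapse all the first-kind factors into a single $(C(V,q),J_q^{-\id})$ of dimension $2n$, and only then absorbs the one-dimensional unitary factor $(K,\sigma|_K)\simeq(C(\langle c\rangle),J^{-\id})$. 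But both yield (iii), so this is a mild stylistic difference, not an error.

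The genuine gap is in your explanation of the (iv)/(v) dichotomy. You assert that from (iii) one passes to the even Clifford algebra via Corollary \ref{C0C-id} or \ref{C0Cid}, and that ``which of these identifications places the involution in the desired shape depends on the residue of $\dim V=2n+1$ modulo $4$.'' That cannot be the mechanism: Corollaries \ref{C0C-id} and \ref{C0Cid} carry no hypothesis on $\dim q\bmod 4$, and starting from the data of (iii), which is $(C(V,q),J_q^{\id})$, only \ref{C0Cid} applies — and it always produces a reflection on $C_0(\langle-1\rangle\perp q)$, regardless of the parity of $n$. So your route proves (v)-type output in every case and never produces the $J^{\id}$ on $C_0$ required by (iv). To obtain (iv) when $n$ is even, one must not start from (iii) at all; the paper instead returns to the intermediate $(A_0,\sigma_0)\simeq(C(V,q),J_q^{-\id})$ of dimension $2n$, applies \ref{C0C-id} (which wants $J^{-\id}$ on the Clifford side, not $J^{\id}$) to get $(A_0,\sigma_0)\simeq(C_0(\langle-1\rangle\perp q),J^{\id})$, and then fuses with $(K,\sigma|_K)$ using \ref{cor-cliff-paire-inv}. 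The parity of $n$ enters through the sign $(-1)^{s+1}$ in \ref{cor-cliff-paire-inv}: with $\dim(\langle-1\rangle\perp q)=2n+1\equiv 1\bmod 4$ for $n$ even, the symmetry $-\id$ coming from the $\langle c\rangle$ factor gets flipped to $+\id$ and the combined symmetry is the identity; for $n$ odd, $2n+1\equiv 3\bmod 4$ and the sign does not flip, so one does not land on $J^{\id}$. It is in that fusion step — not in the choice of \ref{C0C-id} versus \ref{C0Cid} — that the dependence on $n\bmod 2$ lives, and the paper accordingly handles $n$ odd by a separate construction (splitting off one quaternion factor as a $C_0$ of a $4$-dimensional form with a reflection, then fusing with the result of applying (iii) to the remaining $n-1$ factors and $(K,\sigma|_K)$). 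You would need to restructure the (iv)/(v) argument along these lines.

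A minor secondary point: for (iii)$\Rightarrow$(i) you cite Corollary \ref{cor-cliff-iso}, which is a statement about $C_0$, not $C$; the detaching of two-dimensional summands from $(C(V,q),J_q^{\id})$ should go through Proposition \ref{iso-cliff-inv} instead (the paper itself treats this direction as the reversal of its chain of isomorphisms rather than arguing it separately).
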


\begin{proof}
The equivalence of (i) and (ii) is obvious. 
According to \ref{Albert-generalization} (ii), in
(\ref{equation-unitary}),
one may replace $(Q_1,\gamma_1)$ by $(Q'_1,\rho_1)$, where $(Q'_1,\rho_1)$ is
a suitable quaternion algebra with orthogonal involution over $k$.
This implies that one can write 
$$(A,\sigma)\simeq(A_0,\sigma_0)\otimes_k(K,\sigma|_K),$$ 
where $(A_0,\sigma_0)$ is a tensor product of $n$ quaternion algebras with
involution over $k$ and $\sigma_0$ can be chosen to be orthogonal or
symplectic.
Using \ref{tensor-odd} and \ref{tensor-even},  
there exists a quadratic space $(V,q)$
of dimension $2n$ over $k$ with $q\simeq\langle a_1,\cdots,a_{2n}\rangle$ such that 
\begin{equation}\label{A0sigma0}
(A_0,\sigma_0)\simeq
(C(V,q),J_q^{-\id}).
\end{equation}
There also exists an element $c\in k^{\times}\backslash k^{\times 2}$
such that $(K,\sigma)\simeq(C(\langle c\rangle),J^{-\id})$. 
Take $q'=\langle a_1,\cdots,a_{2n-1}\rangle$.
We thus obtain
\begin{eqnarray*}
(A_0,\sigma_0)\otimes(K,\sigma|_K)&\simeq&
(C(V,q),J_q^{-\id})\otimes(C(\langle c\rangle),J^{-\id})\\
\text{using \ref{cor-cliff-inv} (a)}&\simeq&(C(q\perp\langle
\d q\cdot c\rangle),J^{-\id\oplus\id}) \\
\text{using \ref{iso-cliff-inv} (a)}&\simeq&(C(q'),J^{-\id})\otimes(C(\langle a_{2n},\d q\cdot c\rangle),J^{-+})\\
\text{using \ref{qua-cliff-sym}, $\exists d,\ e\in k^{\times}$}&\simeq&(C(q'),J^{-\id})\otimes(C(\langle d,e\rangle),J^{++})\\
\text{using \ref{cor-cliff-inv} (a)}&\simeq&(C(-de\cdot q'\perp\langle d,e\rangle),J^{\id\oplus\id})
\end{eqnarray*}
This implies the equivalence of (i) and (iii).
Using \ref{C0C-id} and the relation (\ref{A0sigma0}), we obtain
$$(A_0,\sigma_0)\simeq(C_0(\langle-1\rangle\perp q),J^{\id}).$$

Suppose that $n$ is even.
Take $q_1=\langle-1\rangle\perp q$. 
We thus obtain
\begin{eqnarray*}
(A_0,\sigma_0)\otimes(K,\sigma|_K)&\simeq&
(C_0(q_1),J^{\id})\otimes(C(\langle c\rangle),J^{-\id})\\
\text{using \ref{cor-cliff-paire-inv}
  (a)}&\simeq&(C_0(q_1\perp\langle
-\d q_1\cdot c\rangle),J^{\id\oplus\id}). 
\end{eqnarray*}
Thus it suffices to put $q'=q_1\perp\langle-\d q_1\cdot\rangle$.
This implies the equivalence of (i) and (iv).

Suppose that $n$ is odd.
We can write 
$$(A,\sigma)=(Q_1,\rho_1)\otimes_k\cdots\otimes_k(Q_{n},\rho_{n})\otimes_k(K,\sigma|_K),$$
where $(Q_i,\rho_i)$, $i=1,\cdots,n$, are quaternion algebras with
orthogonal involution. 
According to (iii), there exists a quadratic space $(V,q)$ of
dimension $2n-1$ over $k$ such that
$$(C(V,q),J_{q}^{\id})\simeq(Q_2,\rho_2)\otimes_k\cdots\otimes_k(Q_{n},\rho_{n})\otimes_k(K,\sigma|_K).$$

According to \ref{qua-cliff-sym}, there exists a quadratic space
$(V_1,q_1)$ of dimension $2$ over $k$ 
such that 
$(C(V_1,q_1),J_{q_1}^{\id})\simeq(Q_1,\rho_1)$.
According to \ref{C0Cid}, there exists a reflection $\tau_1$ of the underlying
vector space of $\langle-1\rangle\perp q_1$ such that
$(C(q_1),J^{\id})\simeq(C_0(\langle-1\rangle\perp q_1),J^{\tau_1})$.

 Take $q'=\langle-1\rangle\perp q_1$. 
We thus obtain
\begin{eqnarray*}
(A,\sigma)&\simeq&(C_0(q'),J^{\tau_1})\otimes_k(C(V,q),J_{q}^{\id})
\\
\text{using \ref{cor-cliff-paire-inv}
  (a)}&\simeq&(C_0(q'\perp -\d q'\cdot q),J^{\tau_1\oplus\id}).
\end{eqnarray*}
Therefore it suffices to put $q''=q'\perp -\d q'\cdot q$ and
$\tau=\tau_1\oplus\id$. 
\end{proof}

\section{Type of involutions induced by orthogonal symmetries} \label{section-type}

\begin{prop}\label{type-cliff-sym}
\rm{(Compare with \cite[7.4]{shapiro})}  Let $(V,q)$ be a nondegenerate quadratic space of even dimension $n$.
Let $\sigma$ is an orthogonal symmetry
of $V$. 
Then the involution
  $J_q^\sigma$ is orthogonal if and only if 
$tr(\sigma)=n-2s \equiv
  0\ \text{or}\ 2\mod 8$, where $s$ is 
the dimension of the subspace ${V^-}$
  of the anti-symmetric elements of $V$ (cf. \ref{symetrie-car}).
\end{prop}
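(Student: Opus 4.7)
The plan is to compute the trace of $J_q^\sigma$ viewed as a $K$-linear endomorphism of $C(V,q)$. Since $J_q^\sigma$ is an involution, this trace equals $d_+ - d_-$, where $d_\pm$ are the dimensions of its $\pm 1$-eigenspaces. Because $C(V,q)$ is central simple of degree $N = 2^{n/2}$, the involution is orthogonal exactly when $d_+ = N(N+1)/2$, i.e.\ when $\mathrm{tr}(J_q^\sigma) = +2^{n/2}$, and symplectic exactly when $\mathrm{tr}(J_q^\sigma) = -2^{n/2}$. So it suffices to compute the sign of this trace.

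Fix orthogonal bases $e_1, \ldots, e_r$ of $V^+$ and $f_1, \ldots, f_s$ of $V^-$ (so $r + s = n$). The ordered monomials $i_q(e_{i_1}) \cdots i_q(e_{i_k}) i_q(f_{j_1}) \cdots i_q(f_{j_l})$, indexed by subsets $S$ of $\{e_1, \ldots, e_r, f_1, \ldots, f_s\}$, form a $K$-basis of $C(V,q)$. On such a basis element of length $m = |S|$ containing $l$ factors from $\{f_j\}$, the involution $J_q^\sigma$ acts by the scalar $(-1)^l (-1)^{m(m-1)/2}$: the first sign comes from $\sigma(f_j) = -f_j$, and the second from reversing a product of $m$ pairwise orthogonal, hence anticommuting, vectors. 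Summing over $S$ yields
\begin{equation*}
\mathrm{tr}(J_q^\sigma) \;=\; \sum_{m=0}^n (-1)^{m(m-1)/2} c_m, \qquad c_m := [x^m](1+x)^r (1-x)^s.
\end{equation*}

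Using the parity identity $(-1)^{m(m-1)/2} = (-1)^{\lfloor m/2 \rfloor}$ and splitting the sum according to the parity of $m$, one recognizes the right-hand side as $\mathrm{Re}(g(i)) + \mathrm{Im}(g(i))$, where $g(x) := (1+x)^r (1-x)^s$. A direct computation (using $(1+i)(1-i) = 2$) gives $g(i) = (1+i)^r(1-i)^s = 2^s (1+i)^{r-s} = 2^{n/2} e^{i\pi(n-2s)/4}$, hence
\begin{equation*}
\mathrm{tr}(J_q^\sigma) \;=\; 2^{n/2}\bigl(\cos(\pi(n-2s)/4) + \sin(\pi(n-2s)/4)\bigr).
\end{equation*}
Since $t := n - 2s$ is even, the factor $\cos(\pi t/4) + \sin(\pi t/4)$ takes the values $+1, +1, -1, -1$ as $t \equiv 0, 2, 4, 6 \pmod 8$; so the trace equals $+2^{n/2}$ iff $n - 2s \equiv 0$ or $2 \pmod 8$ (the orthogonal case), and $-2^{n/2}$ otherwise (the symplectic case). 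The only delicate point is the sign bookkeeping in the first step (the interplay between $\sigma$ on the $f_j$ and the reversion on an $m$-fold anticommuting product); once that is pinned down, the rest is routine generating-function arithmetic at $x = \pm i$.
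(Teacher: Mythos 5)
Your proof is correct, but it takes a genuinely different route from the paper. The paper argues by induction on $n$: it splits off a $2$-dimensional nondegenerate subspace $W \subset V^+$ or $W \subset V^-$, invokes the tensor-decomposition isomorphism of Proposition~\ref{iso-cliff-inv} to write $(C(V,q),J_q^\sigma)$ as a tensor product of $(C(W,q_W),J_{q_W}^{\sigma_W})$ with a smaller Clifford algebra carrying the involution induced by $-\sigma|_{W^\perp}$, and tracks how the invariant $n-2s \bmod 8$ changes. Your argument instead diagonalizes $J_q^\sigma$ explicitly on the standard monomial basis, observes that the basis consists of $\pm 1$-eigenvectors with eigenvalue $(-1)^l(-1)^{m(m-1)/2}$, and counts the eigenvalues via a generating function evaluated at $x=i$, reducing the question to the sign of $\cos(\pi t/4)+\sin(\pi t/4)$ for $t=n-2s$. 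This is more elementary and self-contained (it bypasses the decomposition machinery of Section~\ref{section-decomposition} entirely, though it does rely on the standard fact that the type of an involution of the first kind on a central simple algebra of degree $N$ is read off from $\dim\mathrm{Sym}=N(N\pm1)/2$), while the paper's inductive proof fits the structural theme of the article and reuses results it has already established. One small point worth stating explicitly: you are really computing the integer $d_+-d_-$, not the trace in $K$ (which in positive characteristic could collapse information); this is harmless here since the matrix is literally diagonal with entries $\pm1$, but phrasing it as an eigenvalue count rather than a trace avoids any appearance of a characteristic issue.
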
 

\begin{proof}
If $n=2$, the equivalence of the conditions is obvious (cf. 
\ref{remark-deg-2}). So let assume that $n\geqslant 4$. 
At least one of the subspaces ${V^+}$ or ${V^-}$ is of dimension 
greater or equal to $2$. 
As ${V^+}$ and ${V^-}$ are orthogonal, relative to the bilinear form
associated to $q$,
we conclude that $q|_{{V^+}}$ and $q|_{{V^-}}$ are nondegenerate. Consider a 
subspace $W$ of dimension $2$ of $V$ with $q|_W$ nondegenerate, such that 
either $W\subset {V^+}$ or $W\subset {V^-}$. 
The orthogonal subspace $W^\perp$ is stable under $\sigma$. 
We thus have a 
decomposition $\sigma=\sigma|_W\oplus\sigma_{W^\perp}$. 
In order to simplify the notation,
let us set $\sigma_W=\sigma|_W$, $\sigma_{W^\perp}=\sigma|_{W^\perp}$,
$q_W=q|_W$ and $q_{W^\perp}=q|_{W^\perp}$. 
According to \ref{iso-cliff-inv}, we 
have an isomorphism of algebras with involution:
$$(C(V,q),J_q^\sigma)\simeq (C(W,q_W)\otimes C(W^\perp,q_{W^\perp}),
J_{q_W}^{\sigma_W}\otimes J_{\d q_W{\cdot}q_{W^\perp}}^{-\sigma_{W^\perp}})$$
If $W\subset {V^+}$, then we have $\sigma_W=\id$ and
$J_{q_W}^{\sigma_W}$ is consequently
orthogonal (cf. \ref{remark-deg-2}). 
In order to prove that $J_q^\sigma$ is
orthogonal, it is necessary and sufficient to show that
$J'=J_{\d q_W{\cdot}q_{W^\perp}}^{-\sigma_{W^\perp}}$ is orthogonal. 
By induction, $J'$ is orthogonal if and only if
$n'-2s'\equiv 0\ \text{or } 2 \mod 8$, where $n'=\dim W^\perp$ and 
$s'$ is the dimension of the subspace of the anti-symmetric elements of 
$-\sigma_{W^\perp}$. 
As $n'=n-2$ and $s'=n-2-s$, we obtain $n-2s=-(n'-2s')+2$ and the proof is achieved.

If $W\subset {V^-}$, we have $\sigma_W=-\id$ and $J_{q_W}^\sigma$ is
symplectic. 
In order to prove that $J_q^\sigma$ is orthogonal, it is necessary and
sufficient to show that $J'$ is symplectic. 
By induction, $J'$ is
symplectic if
$n'-2s'\equiv 4\text{ or }6\mod 8$.
 In this case, 
we have $n'=n-2$ and $s'=n-s$. 
Therefore we have 
$n-2s=-(n'-2s')-2$ and the proof is achieved.
\end{proof}

\begin{cor}\label{lewis-type}
(\rm{\cite[Prop. 3]{lewis-clifford}}) 
Let $(V,q)$ be a nondegenerate quadratic space 
of even
dimension $n$. 
Consider the involutions $J_q^{\id}$ and $J_q^{-\id}$ of $C(V,q)$.
Then we have:
\begin{itemize}
\item[\rm{(i)}] If $n\equiv 0 \mod 8$ then $J_q^{\id}$
  and $J_q^{-\id}$ are of orthogonal type.
\item[\rm{(ii)}] If $n\equiv 2 \mod 8$ then $J_q^{\id}$ is of
orthogonal type and $J_q^{-\id}$ is of symplectic type.
\item[\rm{(iii)}]  If $n\equiv 4 \mod 8$ then $J_q^{\id}$  and $J_q^{-\id}$
are of symplectic type.
\item[\rm{(iv)}] If $n\equiv 6 \mod 8$ then $J_q^{\id}$ is of
symplectic type and  $J_q^{-\id}$ is if orthogonal type.
\end{itemize} 
\end{cor}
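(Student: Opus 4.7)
The plan is to derive Corollary \ref{lewis-type} directly from Proposition \ref{type-cliff-sym} by specializing to $\sigma=\id$ and $\sigma=-\id$. I would first note that for the orthogonal symmetry $\sigma=\id$, the subspace of anti-symmetric elements is $V^-=0$, so $s=0$; and for $\sigma=-\id$, the subspace of symmetric elements is $V^+=0$, so $s=n$. Thus the quantity $n-2s$ appearing in Proposition \ref{type-cliff-sym} equals $n$ in the first case and $-n$ in the second.

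Next I would apply the criterion: $J_q^{\sigma}$ is orthogonal iff $n-2s\equiv 0,2\pmod 8$, and is otherwise symplectic (the only alternative is $n-2s\equiv 4,6\pmod 8$, since $n-2s$ is even whenever $n$ is even). So $J_q^{\id}$ is orthogonal exactly when $n\equiv 0,2\pmod 8$, and $J_q^{-\id}$ is orthogonal exactly when $-n\equiv 0,2\pmod 8$, i.e.\ when $n\equiv 0,6\pmod 8$.

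The four cases of the corollary then follow by a routine tabulation. For $n\equiv 0\pmod 8$ both $n$ and $-n$ are $\equiv 0$, giving (i); for $n\equiv 2\pmod 8$ we get $n\equiv 2$ but $-n\equiv 6$, giving (ii); for $n\equiv 4\pmod 8$ both $n$ and $-n$ are $\equiv 4$, giving (iii); and for $n\equiv 6\pmod 8$ we get $n\equiv 6$ and $-n\equiv 2$, giving (iv).

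There is essentially no obstacle here, since all the real content lies in Proposition \ref{type-cliff-sym}; the only thing to be careful about is the identification of the anti-symmetric subspace in each of the two special cases, and the sign that consequently appears in $n-2s$. The rest is arithmetic modulo $8$.
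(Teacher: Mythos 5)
Your proof is correct and is exactly the argument the paper intends: Corollary \ref{lewis-type} is stated without proof as an immediate specialization of Proposition \ref{type-cliff-sym} to $\sigma=\id$ (where $s=0$, so $n-2s=n$) and $\sigma=-\id$ (where $s=n$, so $n-2s=-n$), followed by the same tabulation modulo $8$.
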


\begin{cor}\rm{(\cite[8.4]{boi})}\label{type-inv-ind-eate}
Let $(V,q)$ be a nondegenerate quadratic space of
odd dimension $n$.
Consider the involution $\jz{q}{\id}$ of $C_0(V,q)$.
Then we have:
\begin{itemize}
\item[\rm{(i)}] If $n\equiv 1,7 \mod 8$ then $\jz{q}{\id}$ is
orthogonal.
\item[\rm{(ii)}] If $n\equiv 3,5 \mod 8$, then $\jz{q}{\id}$ is
symplectic.
\end{itemize} 
\end{cor}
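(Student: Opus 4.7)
The plan is to reduce the odd-dimensional case to the even-dimensional case already handled by Corollary \ref{lewis-type}, using Corollary \ref{cor-cliff-iso} as the bridge. Since $q$ is nondegenerate and $\mathop{\mathrm{char}}K \neq 2$, $V$ admits an anisotropic vector $v$; set $d = q(v) \in K^\times$ and $V' = v^\perp$, so $\dim V' = n-1$ is even and $q' := q|_{V'}$ is nondegenerate.

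Apply Corollary \ref{cor-cliff-iso} with $\sigma = \id$. The condition $\sigma(v) = \varepsilon v$ holds with $\varepsilon = +1$, and the induced symmetry on $V'$ is $\sigma' = -\varepsilon \cdot \sigma|_{V'} = -\id_{V'}$. This yields an isomorphism of algebras with involution
\begin{equation*}
(C_0(V,q),\jz{q}{\id}) \simeq (C(V',-d\cdot q'), J_{-d\cdot q'}^{-\id}).
\end{equation*}
Thus determining the type of $\jz{q}{\id}$ on $C_0(V,q)$ is reduced to determining the type of $J^{-\id}$ on the Clifford algebra of an even-dimensional nondegenerate quadratic form of dimension $n-1$.

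Now invoke Corollary \ref{lewis-type} applied to $(V', -d\cdot q')$, reading off which residue classes modulo $8$ make $J^{-\id}$ orthogonal, respectively symplectic. Namely, $J^{-\id}$ is orthogonal when $\dim V' \equiv 0, 6 \pmod 8$ and symplectic when $\dim V' \equiv 2, 4 \pmod 8$. Substituting $\dim V' = n-1$ gives: $\jz{q}{\id}$ is orthogonal when $n \equiv 1 \pmod 8$ or $n \equiv 7 \pmod 8$, and symplectic when $n \equiv 3 \pmod 8$ or $n \equiv 5 \pmod 8$, matching the four cases in the statement.

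There is essentially no difficult step here: the whole argument consists in locating the correct reduction theorem (namely \ref{cor-cliff-iso}), choosing $\sigma = \id$ so that the produced symmetry on $V'$ is $-\id$, and then reading off \ref{lewis-type} for dimension $n-1$. The only small point to verify is that $V$ does contain an anisotropic vector, which is immediate from nondegeneracy of $q$ in characteristic different from $2$ (e.g.\ via the existence of an orthogonal basis).
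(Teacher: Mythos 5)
Your argument is correct and is exactly the route the paper takes: the paper's proof is simply the one-line instruction ``Use \ref{lewis-type} and \ref{cor-cliff-iso},'' and you have carried out precisely that reduction, choosing $\sigma=\id$, $\varepsilon=+1$ in Corollary~\ref{cor-cliff-iso} to obtain $(C_0(V,q),\jz{q}{\id})\simeq(C(V',-d\cdot q'),J^{-\id})$ and then reading off Corollary~\ref{lewis-type} for $\dim V'=n-1$.
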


\begin{proof}
Use \ref{lewis-type} and \ref{cor-cliff-iso}.
\end{proof}

\begin{cor}\label{type-reflection}
Let $(V,q)$ be a nondegenerate quadratic space of even
dimension $n$ and let $\tau$ be a reflection 
of $V$. 
Consider the involution $J_q^{\tau}$ of $C(V,q)$.
We have:
\begin{itemize}
\item[\rm{(i)}] If $n\equiv 2,4 \mod 8$, then $J_q^{\tau}$ 
is of orthogonal type.
\item[\rm{(ii)}] If $n\equiv 0,6 \mod 8$, then $J_q^{\tau}$ is of 
symplectic type.
\end{itemize}
\end{cor}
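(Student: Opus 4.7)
The plan is to derive this corollary as an immediate specialization of Proposition \ref{type-cliff-sym}. That proposition characterizes the orthogonal/symplectic type of $J_q^\sigma$ in terms of the quantity $n - 2s$ modulo $8$, where $s = \dim V^-$ is the dimension of the anti-symmetric subspace of the orthogonal symmetry $\sigma$. So the only task is to compute $s$ for a reflection and then read off the residue.

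By definition a reflection $\tau$ of $(V,q)$ has fixed subspace $V^+$ equal to a hyperplane, so $\dim V^+ = n-1$. By Proposition \ref{symetrie-car} we have $V = V^+ \perp V^-$, whence $s = \dim V^- = 1$. Therefore the relevant invariant is
\[
n - 2s \;=\; n - 2.
\]

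It now remains to apply Proposition \ref{type-cliff-sym}: $J_q^\tau$ is orthogonal precisely when $n - 2 \equiv 0$ or $2 \pmod 8$, and symplectic otherwise. Reducing modulo $8$:
\begin{itemize}
\item $n \equiv 2 \pmod 8 \Longrightarrow n-2 \equiv 0 \pmod 8$, so $J_q^\tau$ is orthogonal;
\item $n \equiv 4 \pmod 8 \Longrightarrow n-2 \equiv 2 \pmod 8$, so $J_q^\tau$ is orthogonal;
\item $n \equiv 6 \pmod 8 \Longrightarrow n-2 \equiv 4 \pmod 8$, so $J_q^\tau$ is symplectic;
\item $n \equiv 0 \pmod 8 \Longrightarrow n-2 \equiv 6 \pmod 8$, so $J_q^\tau$ is symplectic.
\end{itemize}
This exactly gives the two cases (i) and (ii) of the statement.

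There is essentially no obstacle, since all the substantive work has been done in Proposition \ref{type-cliff-sym} (whose proof uses induction together with the tensor product decomposition of Proposition \ref{iso-cliff-inv}). The only minor thing to keep in mind is that the hypothesis $n$ even in the corollary is needed so that Proposition \ref{type-cliff-sym} applies directly to $C(V,q)$ (which is central simple in even dimension and therefore carries a well-defined orthogonal/symplectic type).
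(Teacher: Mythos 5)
Your proof is correct and follows precisely the same route as the paper: observe that $s=1$ for a reflection, then specialize Proposition \ref{type-cliff-sym} via $n-2s=n-2$ and reduce modulo $8$. Nothing to add.
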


\begin{proof}
It suffices to note that for a reflection $\tau$, the dimension
$s$, of the subspace of the anti-symmetric elements is equal to $1$
and to use \ref{type-cliff-sym}. 
\end{proof}


\noindent


\scriptsize 
\noindent 
{\sc Department of Mathematical Sciences, 
Sharif University of Technology, P. O. Box: 11155-9415, Tehran, Iran. 
Homepage: {\tt http://sharif.ir/$\sim$mmahmoudi}.
Email address: {\tt mmahmoudi@sharif.ir}}


\begin{thebibliography}{r1}

\bibitem{bfst}
Bayer-Fluckiger, E.;  Shapiro, D. B.;  Tignol,  J.-P.; Hyperbolic involutions. Math. Z. {\bf 214}
(1993),  461--476.

\bibitem{becher}
Becher, Karim Johannes; 
A proof of the Pfister factor conjecture.  
Invent. Math.  {\bf 173}  (2008),  no. 1, 1--6.

\bibitem{chevalley}
Chevalley, C.;
{\em The algebraic theory of spinors and Clifford algebras.
Collected works.} Vol. 2. Edited by Pierre Cartier
and Catherine Chevalley. Springer-Verlag, Berlin, 1997.

\bibitem{deheuvels}
Deheuvels, R.;
{\em Formes quadratiques et groupes classiques.}
Math\'ematiques. Presses Universitaires de France, Paris, 1981.

\bibitem{dieudonne}
Dieudonn\'e, J. A.;
{\em La g\'eom\'etrie des groupes classiques. }
Troisi\`eme \'edition. Ergebnisse der Mathematik und ihrer Grenzgebiete, Band 5. Springer-Verlag, Berlin-New York, 1971.

\bibitem{girard}
Girard, Patrick R.;
{\em Quaternions, Clifford algebras and relativistic physics.}
 Birkh\"auser Verlag, Basel, 2007.

\bibitem{helmstetter-micali}
Helmstetter, J.;  Micali, A.; 
{\em Quadratic mappings and Clifford algebras.} Birkh\"auser Verlag, Basel, 2008.

\bibitem{kq}
Karpenko, N.;  Qu\'eguiner, A.; 
A criterion of decomposability for degree $4$ algebras with unitary
involution.
J. Pure Appl. Algebra  {\bf 147}  (2000),  no. 3, 303--309. 




\bibitem{boi}
   {Knus, M.-A.; Merkurjev, A.; Rost, M.;
              Tignol, J.-P.;}
  {\em The book of involutions.}
   {American Mathematical Society Colloquium Publications},
  {\bf 44},
 {American Mathematical Society},
   {Providence, RI},
      {1998}.

\bibitem{kps} Knus, M.-A.; Parimala, R.; Sridharan, R.;
Involutions on rank $16$ central simple algebras.
J. Indian Math. Soc. {\bf 57} (1991) 143--151.

\bibitem{lam} Lam, T. Y.;
{\em Introduction to quadratic forms over fields. }
Graduate Studies in Mathematics {\bf 67}, Providence, RI: American Mathematical Society
(AMS), 2005.

\bibitem{lewis-period}
Lewis, D. W.;
Periodicity of Clifford algebras and exact octagons of Witt groups.
Math. Proc. Cambridge Philos. Soc. {\bf 98} (1985), no. 2, 263--269. 

\bibitem{lewis-clifford}
Lewis, D. W.;
A note on Clifford algebras and central division algebras with involution.
Glasgow Math. J. {\bf }26 (1985), no. 2, 171--176. 

\bibitem{lounesto}
Lounesto, P.; 
{\em Clifford algebras and spinors.}
Second edition. London Mathematical Society Lecture Note Series, 286. Cambridge University Press, Cambridge, 2001.

\bibitem{lounesto-ablamowicz}
Lounesto, P.; Ablamowicz, R.;
{\em Clifford algebras: applications to mathematics, physics, and engineering.}
Volume 34, Progress in Mathematical Physics,
Birkh\"auser, Boston, 2004.


\bibitem{mahmoudi}
Mahmoudi, M. G.;
On Hyperbolic Clifford algebras with involution.
Algebra Colloq. (to appear). 

\bibitem{rowen} Rowen, L. H.; 
Central Simple Algebras.
Israel J. Math. {\bf 29} (1978) no. 2-3, 285--301.

\bibitem{sabbata} 
de Sabbata, V.; Datta, B. K.;
{\em Geometric algebra and applications to physics.}
With an introduction by George T. Gillies. Taylor \& Francis, New York, 2007.

\bibitem{shapiro}
Shapiro, Daniel B.;
{\em Compositions of quadratic forms.}
de Gruyter Expositions in Mathematics, 33. Walter de Gruyter \& Co., Berlin, 2000.

\bibitem{scharlau} 
Scharlau, W.;
{\em Quadratic and Hermitian forms.}
 Grundlehren der Mathe\-matischen
  Wissenschaften, 270. Springer-Verlag, Berlin, 1985.

\bibitem{vdwaerden}
van der Waerden, B. L.;
{\em A history of algebra.
From al-Khwarizmi to Emmy Noether.} Springer-Verlag, Berlin, 1985.

\end{thebibliography}
\end{document}